\documentclass[twoside]{article}

%
\usepackage[accepted]{aistats2022}
%


\setlength{\pdfpageheight}{11in}
\setlength{\pdfpagewidth}{8.5in}

\usepackage{float, amssymb, amsmath, amsthm, graphicx, bbm, multirow, siunitx, placeins}
\usepackage[dvipsnames]{xcolor}
\usepackage{enumitem}
\setlist{nolistsep}

\usepackage{algorithm}
\usepackage{algpseudocode}
\algnewcommand\algorithmicinput{\textbf{INPUT:}}
\algnewcommand\INPUT{\item[\algorithmicinput]}
\algnewcommand\algorithmicoutput{\textbf{OUTPUT:}}
\algnewcommand\OUTPUT{\item[\algorithmicoutput]}

\usepackage{hyperref}[]
\usepackage{cleveref}



\newtheorem{theorem}{Theorem}
\newtheorem{lemma}[theorem]{Lemma}
\newtheorem{proposition}[theorem]{Proposition}
\newtheorem{corollary}[theorem]{Corollary}

\newtheorem{definition}{Definition}
\newtheorem{remark}{Remark}

\newtheorem{model}{Model}

\allowdisplaybreaks

\DeclareMathOperator*{\argmin}{arg\,min}
\DeclareMathOperator*{\argmax}{arg\,max}

\usepackage[round]{natbib}

\bibliographystyle{apalike}
\usepackage{mathtools}

\def\P{\mathbb{P}}

\def\th{\mathrm{th}}

\def\R{\mathbb{R}}

\begin{document}

\runningtitle{Denoising and Change Point Localisation}
\runningauthor{Wang, Madrid Padila, Yu, Rinaldo}
    
%

%

\twocolumn[

\aistatstitle{Denoising and Change Point Localisation in Piecewise-Constant High-Dimensional Regression Coefficients}

\aistatsauthor{ Fan Wang \And Oscar Hernan Madrid Padilla}

\aistatsaddress{Department of Statistics\\University of Warwick \And  Department of Statistics\\ University of California, Los Angeles } 

\aistatsauthor{Yi Yu \And Alessandro Rinaldo}
\aistatsaddress{Department of Statistics\\University of Warwick \And Department of Statistics and Data Science\\Carnegie Mellon University} 

]

\begin{abstract}
 We study the theoretical properties of the fused lasso procedure originally proposed by \cite{tibshirani2005sparsity} in the context of a linear regression model in which the regression coefficient are totally ordered and assumed to be sparse and piecewise constant.  Despite its popularity, to the best of our knowledge, estimation error bounds in high-dimensional settings have only been obtained for the simple case in which the design matrix is the identity matrix.  We formulate a novel restricted isometry condition on the design matrix that is tailored to the fused lasso estimator and derive estimation bounds for both the constrained version of the fused lasso assuming dense coefficients and for its penalised version.  We observe that the estimation error can be dominated by either the lasso or the fused lasso rate, depending on whether the number of non-zero coefficient is larger than the number of piecewise constant segments.  Finally, we devise a post-processing procedure to recover the piecewise-constant pattern of the coefficients. Extensive numerical experiments support our theoretical findings.
\end{abstract}

\section{INTRODUCTION}\label{sec-introduction}

High-dimensional linear regression models have been at the centre of statistical research and applications for decades due to advances in the  data collection and storage technologies in numerous application areas, including genetics \citep[e.g.][]{guo2019optimal}, finance \citep[e.g.][]{pun2016robust}, economics \citep[e.g.][]{fan2011sparse}, cybersecurity \citep[e.g.][]{peng2018modeling} and climatology \citep[e.g.][]{li2020far}, among many others.  To be specific, in this paper, we consider the following model
\begin{equation}\label{eq1}
	y_i = a_i^{\top} x^* + \epsilon_i, \quad i = 1, \ldots, n,
\end{equation}
where $x^* \in \mathbb{R}^p$ is the unknown regression coefficient of interest, $\{\epsilon_i\}_{i = 1}^n$ is a sequence of independent mean-zero random variables, $\{a_i\}_{i = 1}^n \subset \mathbb{R}^p$ are the $p$-dimensional covariates, $\{y_i\}_{i = 1}^n \subset \mathbb{R}$ are the responses and the dimensionality $p$ is allowed to be a function of the sample size $n$. 

The model \eqref{eq1} has been extensively studied in a vast body of methodological and theoretical work.  To overcome the high dimensionality, a stream of penalisation-based methods have been developed based on the entrywise sparsity assumption that only a few entries of $x^*$ are non-zero.  This line of attack includes ridge estimators \citep[e.g.][]{hoerl1970ridge}, lasso estimators \citep[e.g.][]{tibshirani1996regression}, bridge estimators \citep[e.g.][]{frank1993statistical}, elastic net estimators \citep[e.g.][]{zou2006sparse}, to name a few, theoretical properties of which have been extensively studied \citep[e.g.][]{buhlmann2011statistics}.  Beyond the entrywise sparsity, different structural sparsity assumptions have also been exploited in the literature, e.g.~known group structures \citep[e.g.][]{yuan2006model, simon2013sparse}, low-rank structures \citep[e.g.][]{candes2009exact}, piecewise-polynomial patterns \citep[e.g.][]{kim2009ell_1,tibshirani2014adaptive}, among others.  Surrounding~\eqref{eq1}, various statistical tasks are of interest, including testing, estimation, prediction and recovery of specific sparsity patterns. We refer the readers to \cite{friedman2001elements} and \cite{buhlmann2011statistics} as comprehensive textbooks.

In this paper, for the high-dimensional linear regression problem \eqref{eq1}, we are concerned with estimating~$x^*$ with piecewise-constant assumptions, that there exists an unknown set of locations, namely change points, $S = \{t_1, \ldots, t_s\} \subset \{1, \ldots, p-1\}$ such that $x^*_i \neq x_{i+1}^*$ if and only if $i \in S$.
    Let $y = (y_1, \ldots, y_n)^{\top}$, $A = (a_1, \ldots, a_n)^{\top}$, $s = |S|$, $\hat{x}$ be an estimator of $x^*$ and $S(\hat{x})$ be the set of change points induced by $\hat{x}$ (see \Cref{sec-notation}).  We aim for both (i) $\|\hat{x} - x^*\|$ and (ii) $d_{\mathrm{H}}(S(\hat{x}), S)$ to be as small as possible, where $\|\cdot\|$ is the $\ell_2$-norm of a vector and $d_{\mathrm{H}}(\cdot, \cdot)$ is the Hausdorff distance between two sets.

The piecewise-constant assumption enforces that $x^*$ lies in an unknown $(s+1)$-dimensional subspace of~$\mathbb{R}^p$.  This assumption can be seen as a relaxation of the entrywise sparsity assumption.  Let $\|\cdot\|_0$ be the $\ell_0$-norm of a vector and suppose that $\|x^*\|_0 = s_0$.  It then holds that $s \leq 2s_0$.  In practice, allowing for a very dense regression coefficients, the piecewise-constant assumption is more realistic than the entrywise sparsity assumption, and some examples of this are given in \Cref{sec:real}.

The statistical tasks (i) and (ii) above are referred to as denoising and change point localisation, which are closely-related but substantially different.  For example, when the design $A$ is an identity matrix, the fused lasso estimator is optimal in denoising \citep{guntuboyina2020adaptive, ortelli2019prediction} but sub-optimal in localising change points \citep{lin2017sharp, zhang2019element}.  As for the special case of the design~$A$ being an identity matrix, \eqref{eq1} becomes the classical univariate change point problem \citep[e.g.][]{wang2020univariate, verzelen2020optimal}.  Another related problem is the high-dimensional linear regression change point detection problem \citep[e.g.][]{rinaldo2021localizing}, which is however different, in the sense that each observation is associated with a regression vector $x^*_t$ such that $x_t^* \neq x_{t+1}^*$ when $t$ is a change point.  

\cite{xu2019iterative}, arguably, is the closest-related paper to ours.  They assumed a known general graph associated with nodes $\{1, \ldots, p\}$ and edges $E \subset \{1, \ldots, p\}^{\otimes 2}$.  They studied an approximate estimator of the $\ell_0$-penalised optimisation problem 
\begin{equation} \label{eqn:l0}
  \mathrm{min}_{x \in \mathbb{R}^p} \left\{\|y - Ax\|^2 + \lambda \sum_{(i, j) \in E} \mathbf{1}\{x_i \neq x_j\}\right\}.
\end{equation}
Our setup can be seen as a special case of theirs, with the known graph being a chain graph, that is to say $E = \{(i, i+1), \, i = 1, \ldots, p-1\}$.  \cite{xu2019iterative} derived denoising properties of an approximate estimator, under a specific type of the restricted eigenvalue condition, namely the cut-restricted isometry property (c-RIP).  

In terms of the theoretical performances, the $\ell_0$-type penalties and constraints are expected to be superior than their $\ell_1$ counterparts, but suffer computationally, which is why \cite{xu2019iterative} studied an approximate solution to the NP-hard problem \eqref{eqn:l0}.  In this paper, we study the denoising and change point localisation properties of both penalised and constrained least squares estimators, under a range of scenarios.  The theoretical and numerical performances are comparable to those in \cite{xu2019iterative}, with a notably superior performance in real data analysis.  The contributions of this paper are summarised as follows.

\begin{itemize}[leftmargin=*]
    \item We provide an estimation error bound on an $\ell_1$-constrained least squares estimator in \Cref{thm-RIC-bound}, based on a novel version of restricted isometry condition (RIC, \Cref{def1}).  To the best of our knowledge, this type of result has only been shown before for the case $A = I$ \citep[e.g.][]{ortelli2018total}.  \Cref{prop1} is a sufficient condition for the RIC, imposed directly on the design distribution.  
    \item In addition to constrained estimators, in \Cref{thm_penalise_RIC_bound}, we also provide an estimation error bound on a penalised estimator, which is the estimator studied in \cite{tibshirani2005sparsity}, with the presence of both the fused lasso and lasso penalties.  We also show a phase transition phenomenon alternating between the regimes dominated by the piecewise-constant and entrywise sparsities.  We accompany \Cref{thm_penalise_RIC_bound} with in-depth discussions involving other additive types of penalties.
    \item In terms of recovering the piecewise-constant patterns, knowing that $\ell_1$-type estimators are sub-optimal, in \Cref{sec-post-processing} we provide multiple post-processing procedures to prompt the change point localisation consistency.  The post-processing procedures are of interest \emph{per se} for general post-processing change point estimators.  Extensive numerical results are provided to show the favourable performances of our estimators.
\end{itemize}

\subsection{Notation and Problem Setup} \label{sec-notation}

For any set $M$, we let $|M|$ denote its cardinality.  For any vector $v \in \mathbb{R}^p$, we let $S(v) = \{i: \, v_i \neq v_{i+1}\} \subset \{1, \ldots, p-1\}$ be the set of change points induced by $v$.  For any matrix $M$, let $\sigma _{\min}(M)$ and $\sigma _{\max}(M)$ be the smallest and largest singular values of $M$, respectively; let $\|M\|_{\mathrm{op}}$ and $\|M\|_{\infty}$ be the $\ell_2\to\ell_2$ operator norm and the entrywise supremum norm of $M$, respectively.  For any vector $v$, let $\|v\|_1$ and $\|v\|_{\infty}$ be the $\ell_1$- and entrywise supremum norms of $v$, respectively.  Let $D \in \mathbb{R}^{(p-1) \times p}$ be the difference operator that satisfies $D_{ij} = \mathbf{1}\{i = j\} - \mathbf{1}\{j-i = 1\}$, $i \in \{1, \ldots, p-1\}$, $j \in \{1, \ldots, p\}$. We refer to the quantity $\|Dv\|_1$ as the total variation of $v \in \mathbb{R}^p$. For any two sets $M_1, M_2 \subset\{1, \ldots, p\}$, let $d_{\mathrm{H}}(M_1, M_2) = \max\{d(M_1 | M_2), d(M_2 | M_1))\}$ be the Hausdorff distance between $M_1$ and $M_2$, where $d(M_1|M_2) = \max_{m_2 \in M_2} \min_{m_1 \in M_1} |m_1 - m_2|$ is the one-sided Hausdorff distance.

We formalise below the model assumption which is used throughout the paper.

\begin{model} \label{assume-model}
Given data $\{a_i, y_i\}_{i = 1}^n \subset \mathbb{R}^p \times \mathbb{R}$ satisfying~\eqref{eq1}, with $S(x^*) = \{t_1, \ldots, t_s\}$, $t_0 = 0$ and $t_{s+1} = p$, let $\Delta_{p, n} = \min_{i = 0,\ldots,s} (t_{i+1} - t_i)$ and $\kappa_{p, n} = \min_{i = 1,\ldots,s} \kappa_i = \min_{i = 1,\ldots,s} |x^*_{t_i+1} - x^*_{t_i}|$ be the minimal spacing and minimal jump size, respectively.
\end{model}

\section{DENOISING} \label{sec-method}

\subsection{A Constrained Estimator}\label{constrained_estimator}

Under \Cref{assume-model}, we first consider a constrained estimator
    \begin{equation}\label{eq4}
        \hat{x} = \hat{x}(V) = \argmin_{x \in \mathbb{R}^p} \left\{\| y- Ax\|^2:\, \|Dx\|_1\leq  V \right\},
    \end{equation}  
    where $V \geq 0$ is a tuning parameter.  The total variation constraint that $\|Dx\|_1 \leq V$ is imposed in response to the piecewise-constant pattern of $x^*$ that $\|Dx^*\|_0 = s$.  Although total variation constrained estimators are heavily exploited in the existing literature \citep[e.g.][]{rudin1992nonlinear, steidl2006splines, guntuboyina2020adaptive}, to the best of our knowledge, this is the first time total variation constrained estimators are theoretically studied with a general design matrix.
    
\begin{remark}[Constrained and penalised estimators]
An alternative to \eqref{eq4} is a penalised estimator, which is a solution to the optimisation problem
    \[
        \mathrm{min}_{x \in \mathbb{R}^p}\{\|y - Ax\|^2 + \lambda \|Dx\|_1\},
    \]
    where $\lambda > 0$ is a tuning parameter.  When the design matrix $A = I$, this penalised estimator is the fused lasso estimator and has been studied extensively \citep[e.g.][]{ortelli2019prediction, lin2017sharp}.  For a general design matrix $A$, the computational aspects of the penalised optimisation are studied in \cite{tibshirani2011solution}, while its theoretical properties are yet to be understood.  We focus on the constrained estimator \eqref{eq4} in this section, based on the knowledge that constrained estimators can be strictly better in denoising than their penalised counterparts, see \cite{guntuboyina2020adaptive} for a thorough discussion.
\end{remark}    

In order to ensure good theoretical performances of~\eqref{eq4}, we assume the following condition.

\begin{definition} [Restricted isometry condition, RIC] \label{def1}
Let $\eta_1, \eta_2 > 0$ and $\rho_1, \rho_2:\, [0, \infty) \to [0,\infty)$ be increasing functions. A matrix $A \in \mathbb{R}^{n \times p}$ satisfies the $(\eta_1, \eta_2, \rho_1, \rho_2, t)$-restricted isometry condition (RIC) if  
	\begin{equation}\label{eq-RIC-statement-def}
       1 - \eta_1 - \sqrt{\rho_1(t)} \leq \|Ax\| \leq 1 + \eta_2 + \sqrt{\rho_2(t)}, 	    
	\end{equation}
	for all 
	\begin{equation}\label{eq-RIC-X-def}
	    x \in \mathcal{X}(t) = \{v \in \mathbb{R}^p: \, \|Dv\|_1 \leq t, \, \|v\|= 1\}.
	\end{equation}
\end{definition}

\Cref{def1} can be seen as an $\ell_1$-version of the c-RIP condition proposed and justified in \cite{xu2019iterative}.  More discussions on \Cref{def1} are available in \Cref{sec-RIC}. Before that, we present an upper bound on the estimation error of $\hat{x}$ based on \Cref{def1}.

\begin{theorem}\label{thm-RIC-bound}
Assume that the data are from \Cref{assume-model}, the minimal spacing condition $\Delta_{p, n} \geq c_0 p/(s+1)$ holds for some absolute constant $c_0 > 0$, and the design matrix $A$ satisfies $(\eta_1, \eta_2, \rho_1, \rho_2, V)$-RIC, where 
\[
    V \geq V^* = \|Dx^*\|_1 \quad \mbox{and} \quad 1 - \eta_1 - \rho_1(2V) > c_1,
\]
for some absolute constant $c_1 > 0$.  Let $\epsilon = (\epsilon_1, \ldots,$ $\epsilon_n)^{\top} \in \mathbb{R}^n$ be the noise vector.  Let $\hat{x}$ be the minimiser defined in \eqref{eq4} with the tuning parameter $V$.

If $\epsilon \sim \mathcal{N}(0, \sigma^2 I_n)$, then there exists a positive constant $C > 0$, such that it holds with probability at least $1 - 2(p \vee n)^{-1}$,
    \begin{align}\label{eq-thm-RIC-bound-2}
        & \|\hat{x}  - x^*\|^2 \leq C\sigma^2 s \log(p \vee n) \log \{p/(s+1)\} \nonumber \\
        & \hspace{0.5cm} +  C\sigma^2(V -  V^*)^2 p/(s+1) \log(p \vee n).     
    \end{align}
\end{theorem}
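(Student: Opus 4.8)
The plan is to run the classical argument for a constrained least-squares estimator --- basic inequality, restricted curvature, and a uniform bound on the stochastic term --- with the total-variation geometry playing the role that sparsity plays for the lasso, and with \Cref{def1} supplying the curvature. Since $V\geq V^{*}$, the truth $x^{*}$ is feasible in \eqref{eq4}, so $\|y-A\hat{x}\|^{2}\leq\|y-Ax^{*}\|^{2}$; writing $\delta:=\hat{x}-x^{*}$ and $y=Ax^{*}+\epsilon$ and expanding yields the basic inequality $\|A\delta\|^{2}\leq 2\langle\epsilon,A\delta\rangle$. Feasibility and the triangle inequality give $\|D\delta\|_{1}\leq\|D\hat{x}\|_{1}+\|Dx^{*}\|_{1}\leq 2V$, so $\delta/\|\delta\|\in\mathcal{X}(2V)$ (in the only regime that matters, $\|\delta\|\gtrsim 1$; a routine scaling/peeling argument disposes of the complementary case), and the lower bound in \Cref{def1} together with $1-\eta_{1}-\rho_{1}(2V)>c_{1}$ gives the curvature bound $\|A\delta\|^{2}\geq c'\|\delta\|^{2}$ for some $c'>0$. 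It therefore suffices to show that, on an event of probability at least $1-2(p\vee n)^{-1}$, $2\langle\epsilon,A\delta\rangle\leq\tfrac{c'}{2}\|\delta\|^{2}+(\text{right-hand side of }\eqref{eq-thm-RIC-bound-2})$ for every feasible $\delta$; the theorem then follows by rearranging.

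Next I would use optimality more carefully to get a bound that does not deteriorate as $V$ grows. Let $T:=S(x^{*})=\{t_{1},\dots,t_{s}\}$ and let $D_{T},D_{T^{c}}$ be the rows of $D$ indexed by $T$ and its complement. Applying the reverse triangle inequality to the $s$ true jumps of $\hat{x}$ (using $x_{i}^{*}=x_{i+1}^{*}$ for $i\notin T$) to the constraint $\|D\hat{x}\|_{1}\leq V$ gives the cone-type inequality $\|D_{T^{c}}\delta\|_{1}\leq\|D_{T}\delta\|_{1}+(V-V^{*})$: off the true change points, the error has small total variation, controlled by the $s$ on-support increments plus the over-tuning slack $V-V^{*}$.

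For the stochastic term, decompose $\delta=\Pi\delta+\delta^{\perp}$, with $\Pi$ the orthogonal projection onto the $(s+1)$-dimensional space of vectors constant on each block $B_{1},\dots,B_{s+1}$ of the partition of $\{1,\dots,p\}$ induced by $T$; then $\delta^{\perp}$ has zero mean on every block, $D_{T^{c}}\delta^{\perp}=D_{T^{c}}\delta$, and $\langle\epsilon,A\delta\rangle=\langle A^{\top}\epsilon,\Pi\delta\rangle+\langle A^{\top}\epsilon,\delta^{\perp}\rangle$. The ``oracle'' term lives in an $(s+1)$-dimensional subspace: expanding $\Pi\delta$ in the block-indicator basis and using Cauchy--Schwarz bounds it by $\|\delta\|$ times the norm of a Gaussian vector whose coordinates have variances $\lesssim\sigma^{2}$ --- here the RIC upper bound applied to the normalised block indicators $\mathbf{1}_{B_{j}}/\sqrt{|B_{j}|}$, whose total variation is $O(1/\sqrt{\Delta_{p,n}})$ and hence small under the spacing condition, controls $\|A\mathbf{1}_{B_{j}}\|^{2}\lesssim|B_{j}|$ --- so a $\chi^{2}$ tail and AM--GM absorb it into $\tfrac{c'}{4}\|\delta\|^{2}+C\sigma^{2}(s+\log(p\vee n))$, dominated by the first summand of \eqref{eq-thm-RIC-bound-2}. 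The ``fluctuation'' term is the heart of the proof: since $A\delta^{\perp}$ decouples over the blocks, summation by parts within each block bounds $\langle A^{\top}\epsilon,\delta^{\perp}\rangle$ by the within-block total variation $\|D_{T^{c}}\delta^{\perp}\|_{1}=\|D_{T^{c}}\delta\|_{1}$ times a supremum of bridged partial sums of $A^{\top}\epsilon$, whose variances are controlled by the RIC upper bound and by $\Delta_{p,n}\asymp p/(s+1)$; a chaining bound over the total-variation ball in each block --- the step that produces the fused-lasso effective dimension $s\log\{p/(s+1)\}$ --- bounds this supremum with probability at least $1-(p\vee n)^{-1}$. Inserting the cone inequality and an interpolation bound $\|D_{T}\delta\|_{1}\lesssim\sqrt{s(s+1)/p}\;\|\delta\|$ for the $s$ on-support increments (via tent-function dual certificates) turns the fluctuation term into $\tfrac{c'}{4}\|\delta\|^{2}+C\sigma^{2}s\log(p\vee n)\log\{p/(s+1)\}+C\sigma^{2}(V-V^{*})^{2}(p/(s+1))\log(p\vee n)$, the last being the cost of the excess total-variation budget. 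Combining with the curvature bound and rearranging gives \eqref{eq-thm-RIC-bound-2}.

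The main obstacle is the uniform control of $\langle\epsilon,A\delta\rangle$ just described: the feasible set is now the image under a general matrix $A$ of a total-variation ball intersected with an $\ell_{2}$-ball, not the piecewise-constant cones that make the $A=I$ case tractable. \Cref{def1} is the device that makes the reduction possible --- it converts between $\|Av\|$ and $\|v\|$, and bounds the variances of the block and bridge statistics --- but one still has to execute the chaining/interpolation argument that yields the factor $s\log\{p/(s+1)\}$ (equivalently, the effective dimension $s\log\{ep/s\}$ of $s$-piece piecewise-constant vectors), and to track correctly how the over-tuning slack $V-V^{*}$ enters (producing the $(V-V^{*})^{2}(p/(s+1))$ term) while keeping all of the book-keeping --- in particular $\delta/\|\delta\|\in\mathcal{X}(2V)$ --- consistent. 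Getting these logarithmic and polynomial factors right is where essentially all of the technical work lies.
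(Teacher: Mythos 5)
Your overall architecture coincides with the paper's: basic inequality, orthogonal decomposition of $\delta$ into its projection onto the block-constant subspace and the complement, Gaussian maximal inequalities plus the RIC upper bound for the low-dimensional part, a weighted total-variation/effective-sparsity argument producing the $s\log\{p/(s+1)\}$ factor for the complement (the paper imports this from the interpolating-vector machinery of Ortelli and van de Geer rather than re-deriving it by chaining, but that is a presentational difference), the cone-type bookkeeping of the slack $V-V^{*}$, and the RIC lower bound applied to $\delta/\|\delta\|\in\mathcal{X}(2V)$ with the case split on $\|\delta\|\lessgtr 1$. So the route is essentially the paper's.

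There is, however, one concrete gap. You take the partition to be the one induced by the true change-point set $T=S$ and assert that the variances of the bridged partial sums (equivalently, the norms of the dual certificates $\psi_{j}$) are controlled ``by $\Delta_{p,n}\asymp p/(s+1)$.'' What actually enters those variances is the length of the block \emph{containing} index $j$, i.e.\ $\Delta_{\max}=\max_{i}(t_{i+1}-t_{i})$, not the minimal spacing. The hypothesis $\Delta_{p,n}\geq c_{0}p/(s+1)$ bounds the shortest block from below but leaves the longest block free to be of order $p$ (e.g.\ $s$ short blocks of length $c_{0}p/(s+1)$ and one block of length $\asymp p$), in which case your bridge/certificate norms scale like $\sqrt{\Delta_{\max}}\asymp\sqrt{p}$ rather than $\sqrt{p/(s+1)}$, and the argument as written loses a factor of up to $s+1$ in the final bound. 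The paper's remark after the theorem makes exactly this point: without further work the bound inflates by $\Delta_{\max}/\Delta_{p,n}$. The missing ingredient is the augmentation step (the paper's \Cref{lemma_s}): enlarge $S$ to a set $S^{\prime}\supset S$ with $|S^{\prime}|\asymp s+1$ and all gaps of order $p/(s+1)$ (possible precisely because of the minimal-spacing hypothesis), note that $(Dx^{*})_{-S^{\prime}}=0$ still holds so the cone and effective-sparsity arguments go through with $S^{\prime}$ in place of $S$, and run your entire decomposition on the partition induced by $S^{\prime}$. With that modification, and granting the deferred chaining/effective-sparsity and interpolation bounds you invoke, your proof goes through.
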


\Cref{thm-RIC-bound} is the first theoretical result on fused lasso regularisation, under a piece-wise constant model, with a general design matrix, while the only previously related results -- \cite{guntuboyina2020adaptive} and \cite{ortelli2018total} -- merely concerned with the identity design matrix.  Although allowing for more general design matrices, comparing our result to theirs, we are only off by a logarithmic factor. 

We remark that without the condition $\Delta_{p, n} \geq c_0 p/(s+1)$, following from almost identical proofs, \Cref{thm-RIC-bound} still holds with the upper bound in \eqref{eq-thm-RIC-bound-2} inflated by $\Delta_{\max}/\Delta_{p, n}$, where $\Delta_{\max} = \max_{i=0,\ldots,s} (t_{i+1} -t_i)$.

As for the tuning parameter, we highlight the condition $V\geq V^*$ and acknowledge that we lack theoretical controls when $V < V^*$.  This goes in line with \cite{guntuboyina2020adaptive}, who also required $V\geq V^*$  for the constrained fused lasso estimator with identity design matrix.  As in \cite{guntuboyina2020adaptive}, the smaller $(V-V^*)^2$ the sharper the upper bound.  In practice, one can choose $V$ using information-type criteria \citep[e.g.][]{tibshirani2012degrees}.

As for the $\ell_0$-penalised counterparts, two sets of theoretical results are provided in \cite{xu2019iterative}: One (Theorem~3.4 therein) relies on a version of the restricted isometry condition on design matrices, but the model is assumed to be noiseless besides the design matrix; this setup is incomparable with the additive noise setting~\eqref{eq1} considered in this paper.  The other (Theorem~3.5 therein) is a noisy version result and the upper bound thereof is at least of the order of $\|\epsilon\|$.  When $\epsilon \sim \mathcal{N}(0, \sigma^2 I_n)$, it holds that  $\|\epsilon\| \gtrsim \sqrt{n}$ with high probability, which implies that the upper bound \eqref{eq-thm-RIC-bound-2} is sharper than that in \cite{xu2019iterative}.

\subsection{The Restricted Isometry Condition} \label{sec-RIC}

The estimation error bound in \Cref{thm-RIC-bound} holds under \Cref{def1}, which is a version of compatibility conditions.  In order to provide theoretical guarantees for penalised/constrained estimators in high-dimensional problems, compatibility conditions are often assumed.  In particular, \cite{van2018tight} showed that such conditions are necessary for analysing lasso estimators.

\Cref{def1} can be regarded as an $\ell_1$ counterpart of the c-RIP condition introduced in \cite{xu2019iterative}. Instead of restricting in $\mathcal{X}(t)$ defined in \eqref{eq-RIC-X-def}, \cite{xu2019iterative} considered the constraint $\{v \in \mathbb{R}^p: \, 1 \leq \|Dv\|_0 \leq t\}$.  For both our results and those in \cite{xu2019iterative}, the RICs are most useful when assuming $\eta_j$, $\rho(s) = O(1)$, $j = 1, 2$.  This in essence implies that constraining to $\mathcal{X}(t)$, the restricted eigenvalues of $A^{\top}A$ are all around one, i.e.~for any $x \in \mathcal{X}(t)$, it holds that 
\[
\eta_1 + \sqrt{\rho_1(t)} \leq \sqrt{x^{\top}(A^{\top}A - I_p)x} \leq \eta_2 + \sqrt{\rho_2(t)}.
\]
This is in stark contrast to most of the classical lasso estimation literature with entrywise sparsity assumption, where it is often assumed that $\sigma_{\min}\{\mathbb{E}(A^{\top}A)\}$ is of order $O(n)$.  This difference in assumptions directly results in the difference in the final estimation error rates.  To be specific, with some abuse of notation, assuming $\|x^*\|_0 = s$, it holds with large probability that the lasso estimator $\hat{x}^{\mathrm{lasso}}$ satisfies that $\|\hat{x}^{\mathrm{lasso}} - x^*\|^2 \lesssim s\sigma^2\log(p)/n$ \citep[e.g.][]{buhlmann2011statistics}.  However, despite the fact that our piecewise-constant assumption can be seen as a generalisation of the entrywise-sparsity assumption, \eqref{eq-thm-RIC-bound-2} reads as $\|\hat{x} - x^*\|^2 \lesssim s\sigma^2 \log(p)$.  We remark that such difference is merely due to different scalings on the designs - this is more prominent in the statement in \Cref{prop1} below.  We opt for the same scaling as that in \cite{xu2019iterative} for an easier comparison.

In spite of the aforementioned similarity between \Cref{def1} and the c-RIP in \cite{xu2019iterative}, replacing the $\ell_0$ constraint with its~$\ell_1$ counterpart leads to two fundamental differences.  Firstly, constraining the $\ell_1$-norm of $Dx^*$ is, generally speaking, weaker than constraining its $\ell_0$-norm, and therefore enriches the flexibility.  Secondly, since for any vector $v$, $\|Dv\|_0$ is invariant with respect to scaling $v$, but $\|Dv\|_1$ is not, involving an upper bound on $\|Dv\|_1$ in \eqref{eq-RIC-statement-def} makes it more difficult in the proofs.

We provide a sufficient condition for \Cref{def1}.

\begin{proposition} \label{prop1}
Let $A \in \R^{n \times p}$ be a matrix with rows $n^{-1/2} a_i$, $i \in \{1, \ldots, n\}$, where $\{a_i\}_{i = 1}^n$ are independent and identically distributed sub-Gaussian vectors, satisfying that $\|a_1\|_{\psi_2} \leq U$ and $\mathrm{Cov}(a_1) = \Sigma$, with $\sigma_{\min}(\Sigma) = (1 - \zeta)^2$, $\sigma_{\max}(\Sigma) = (1 + \zeta)^2$ and $\zeta \in (0, 1)$.  Assume that there exist absolute constants $c_1, c_2 > 0$ such that 
    \begin{equation}\label{eq5}
        c_1 \sqrt{n} \phi_n \leq \{ t \sqrt{p} + \log (p)\} \{\sqrt{p}+ \sqrt{\log(n)}\} \leq c_2 n^{3/2},
    \end{equation}
    for some diverging sequence $\phi_n$, with $\phi_n U^{-4} \to \infty$, as~$n$ grows unbounded.  For any $x \in \mathcal{X}(t)$ defined in \eqref{eq-RIC-X-def}, it holds with probability at least 
    \[
        1 - 2n^{-c_3} - 2\exp (-c_4 U^{-4} \phi_n),
    \]
    where $c_3$, $c_4 > 0$  are absolute constants, that
    \[
       1 - \zeta - 2^{-1/2}(1 - \zeta)  \leq \|Ax\| 
       \leq 1 + \zeta + 2^{-1/2}(1 - \zeta).
    \]
\end{proposition}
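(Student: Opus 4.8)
The plan is to reduce the statement to a uniform deviation bound for a quadratic empirical process indexed by $\mathcal X(t)$, and then to control that process by a chaining argument whose complexity term is pinned down by an explicit Brownian--bridge computation. Since the rows of $A$ are $n^{-1/2}a_i$, we have $\|Ax\|^2 = x^{\top}\hat\Sigma x$ with $\hat\Sigma = n^{-1}\sum_{i=1}^n a_ia_i^{\top}$ and $\E[\hat\Sigma] = \Sigma$, so $\E\|Ax\|^2 = x^{\top}\Sigma x$. Because $\sigma_{\min}(\Sigma) = (1-\zeta)^2$ and $\sigma_{\max}(\Sigma) = (1+\zeta)^2$, every unit vector $x$ satisfies $\sqrt{x^{\top}\Sigma x}\in[1-\zeta,1+\zeta]$, so it suffices to prove
\[
\sup_{x\in\mathcal X(t)}\bigl|\,\|Ax\|-\sqrt{x^{\top}\Sigma x}\,\bigr|\le 2^{-1/2}(1-\zeta)
\]
on an event of the advertised probability; equivalently one may work with $\sup_{x\in\mathcal X(t)}|x^{\top}(\hat\Sigma-\Sigma)x|$ and a threshold that is a fixed multiple of $(1-\zeta)^2$, the passage between the two being routine.

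Next I would expose the geometry of $\mathcal X(t)$. Write a unit vector as $v = \bar v\,\mathbf 1 + w$ with $w\perp\mathbf 1$; since $D\mathbf 1 = 0$ the total-variation constraint $\|Dv\|_1\le t$ binds only on $w$, and since $w$ is orthogonal to $\ker D = \spa(\mathbf 1)$ we may write $w = D^{\dagger}u$ with $u = Dv$ and $\|u\|_1\le t$. Thus $\mathcal X(t)$ is contained in the Minkowski sum of a one-dimensional segment and $D^{\dagger}(t\,\mathbb B_1^{p-1}) = t\,\conv\{\pm D^{\dagger}e_j\}$, where $\mathbb B_1^{p-1}$ is the unit $\ell_1$-ball in $\mathbb R^{p-1}$; the one-dimensional (mean) part contributes only a single scalar Bernstein bound, and the entire difficulty is the $\ell_1$ part.

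The crux is the Gaussian width $w(\mathcal X(t)) = \E\sup_{v\in\mathcal X(t)}\langle g,v\rangle$ with $g\sim\mathcal N(0,I_p)$, which by the decomposition above is $\lesssim 1 + t\,\E\|(D^{\dagger})^{\top}g\|_{\infty}$. Writing $\phi_j = D^{\dagger}e_j$, an explicit computation gives $\phi_j = (p-j)/p$ on the first $j$ coordinates and $-j/p$ on the rest, so that $[(D^{\dagger})^{\top}g]_j = \langle\phi_j,g\rangle = G_j - (j/p)G_p$ with $G_j = \sum_{i\le j}g_i$ a Gaussian random walk --- i.e.\ a discrete Brownian bridge. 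A naive maximal inequality over $p$ Gaussians of variance $\asymp p$ would give $\sqrt{p\log p}$, but the bridge structure yields $\E\|(D^{\dagger})^{\top}g\|_{\infty}\le\E\max_{j\le p}|G_j| + \E|G_p|\lesssim\sqrt p$ by Doob's maximal inequality. Hence $w(\mathcal X(t))\lesssim t\sqrt p$ up to lower-order terms, which is exactly the source of the $t\sqrt p$ in condition~\eqref{eq5}; getting this cancellation (rather than paying an extra $\sqrt{\log p}$) is the conceptual heart of the argument.

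Finally I would upgrade the width bound to a uniform control of $\|Ax\|$. Writing $a_i = \Sigma^{1/2}b_i$ with $b_i$ isotropic and $\|b_i\|_{\psi_2}\le U/(1-\zeta)$, and using $w(\Sigma^{1/2}\mathcal X(t))\le(1+\zeta)w(\mathcal X(t))$ (Sudakov--Fernique) together with $\mathrm{rad}(\Sigma^{1/2}\mathcal X(t))\le 1+\zeta$, a sub-Gaussian matrix-deviation inequality --- equivalently a generic-chaining tail bound for the quadratic chaos $x\mapsto n^{-1}\sum_i(a_i^{\top}x)^2$ --- should give, with probability at least $1-2n^{-c_3}-2\exp(-c_4U^{-4}\phi_n)$,
\[
\sup_{x\in\mathcal X(t)}\bigl|\,\|Ax\|-\sqrt{x^{\top}\Sigma x}\,\bigr|\lesssim \frac{U^2\{t\sqrt p+\log p\}\{\sqrt p+\sqrt{\log n}\}}{\sqrt n\,\phi_n},
\]
where the $\log p$ and $\sqrt{\log n}$ factors come from the union bounds over the net and over coordinates (the latter, via an auxiliary operator-norm bound on $A$ used in the discretisation step, also producing the $n^{-c_3}$ term), and the $U^{-4}$ enters through the Frobenius/fourth-moment term of the underlying Hanson--Wright inequality. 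Condition~\eqref{eq5} is precisely calibrated so that the right-hand side is at most $2^{-1/2}(1-\zeta)$, which finishes the proof. I expect this last step --- obtaining a quadratic-process bound at the sharp complexity $t\sqrt p$ rather than a crude net rate, while $D^{\dagger}$ is badly conditioned ($\|D^{\dagger}\|_{\mathrm{op}}\asymp p$), $\Sigma$ is non-isotropic, and only the sub-Gaussian norm $U$ is available --- to be the main technical obstacle, the Brownian-bridge identification being the conceptual one.
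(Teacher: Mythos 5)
Your proposal takes a genuinely different route (Gaussian width plus a generic-chaining matrix-deviation inequality, versus the paper's single coarse-scale net), but as written it does not establish the proposition under condition \eqref{eq5}; there are two concrete gaps. First, the complexity bound is too loose. Dropping the unit-norm constraint and enclosing $\mathcal{X}(t)$ in the Minkowski sum of a segment and $t\,D^{\dagger}\mathbb{B}_1$ gives width of order $t\sqrt p$ (your Brownian-bridge computation of $\E\|(D^{\dagger})^{\top}g\|_{\infty}\lesssim\sqrt p$ is fine), but the set that matters is the intersection with the unit sphere, whose $\delta$-metric entropy is of order $\delta^{-1}\{t\sqrt p+\log p\}$ (Lemma B.1 of Guntuboyina et al., which is exactly what the paper invokes); a Dudley integral then gives width $\asymp\sqrt{t\sqrt p+\log p}$, smaller than your $t\sqrt p$ by essentially a square root once $t\gtrsim p^{-1/2}$. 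With your width the matrix-deviation inequality needs $U^{2}t\sqrt p\lesssim\sqrt n$, and even with the corrected width it needs $t\sqrt p+\log p\lesssim U^{-4}n$; neither is implied by \eqref{eq5}, which permits $t\sqrt p+\log p$ as large as $n^{3/2}/(\sqrt p+\sqrt{\log n})$, e.g.\ of order $n^{3/2}/\sqrt{\log n}\gg n$ when $p\lesssim \log n$. Second, your final displayed bound, with the product $\{t\sqrt p+\log p\}\{\sqrt p+\sqrt{\log n}\}$ divided by $\sqrt n\,\phi_n$, does not follow from the tools you cite, and the claim that \eqref{eq5} makes it at most $2^{-1/2}(1-\zeta)$ is false: the \emph{lower} bound in \eqref{eq5} says precisely that this quantity is at least $c_1U^2$. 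That lower bound is not there to make the deviation small; it guarantees that the per-point exponent is at least of order $U^{-4}\phi_n$, so that the failure probability $2\exp(-c_4U^{-4}\phi_n)$ vanishes.

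The mechanism the paper actually uses, and which is absent from your plan, is a single-scale net argument with a deliberately coarse net. On the event $\|A^{\top}A-\Sigma\|_{\mathrm{op}}\lesssim\sqrt{p/n}+\sqrt{\log(n)/n}$ (probability $1-2n^{-c_3}$, the source of that term in the statement), the discretisation error of an $r$-net is $2r\|A^{\top}A-\Sigma\|_{\mathrm{op}}$, so one may take $r\asymp(1-\zeta)^2\sqrt n/(\sqrt p+\sqrt{\log n})$ and still lose only a fixed fraction of $(1-\zeta)^2$. The net then satisfies $\log|\mathcal N|\lesssim r^{-1}\{t\sqrt p+\log p\}\asymp\{t\sqrt p+\log p\}\{\sqrt p+\sqrt{\log n}\}/\sqrt n$, and the per-point Bernstein bound at a \emph{constant} deviation level $\varepsilon$ costs $\exp(-cU^{-4}\min\{\varepsilon,\varepsilon^2\}n)$; balancing these two quantities is exactly what produces the product structure and the $n^{3/2}$ in \eqref{eq5}. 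Chaining down to scale zero, as the matrix-deviation inequality does, pays for an accuracy that is not needed here (only constant accuracy is required) and yields an incomparable sufficient condition. To salvage your route you must either replace the global width by the single coarse-scale entropy as above, or accept proving the result under $t\sqrt p+\log p\lesssim U^{-4}n$ rather than under \eqref{eq5}.
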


We readily see that with large probability, under the conditions in \Cref{prop1}, the design matrix $A$ satisfies the $\left(\zeta, \zeta, 2^{-1}(1 - \zeta)^2, 2^{-1}(1 - \zeta)^2, t\right)$-RIC.  The condition \eqref{eq5} implies that the dimensionality $p$ can at most grow polynomially with respect to the sample size $n$, which is stronger than allowing for an exponential growth, as in \cite{xu2019iterative}.  Condition \eqref{eq5} is a direct consequence of the metric entropy of the constrain set $\mathcal{X}(t)$.  Specifically, as shown in \cite{guntuboyina2020adaptive}, for any $\delta > 0$, the $\delta$-metric entropy of $\mathcal{X}(t)$ is upper bounded by $\delta^{-1}\{t\sqrt{p} + \log(p)\}$, which is polynomial in $p$. Due to this bottleneck, we conjecture that the condition \eqref{eq5} cannot be improved, if one trades $\ell_0$-sparsity with the weaker version $\ell_1$-sparsity.

An immediate consequence of combining \Cref{thm-RIC-bound} and \Cref{prop1} is as follows.

\begin{corollary}\label{cor-prop+thm}
Assume that the data are from \Cref{assume-model}, the minimal spacing condition $\Delta_{p, n} \geq c_0 p/(s+1)$ holds for some absolute constant $c_0 > 0$, and the design matrix $A$ satisfies the conditions in \Cref{prop1}.  Under all the assumptions in \Cref{prop1}, let 
\[
    V \geq V^* = \|Dx^*\|_1 \quad \mbox{and} \quad 1 - \zeta - \rho_1(2V) > c,
\]
for some absolute constant $c > 0$.  Let $\epsilon = (\epsilon_1, \ldots, \epsilon_n)^{\top} \in \mathbb{R}^n$ be the noise vector.  

If $\epsilon \sim \mathcal{N}(0, \sigma^2 I_n)$, then there exists a positive constant $C > 0$, such that with probability at least 
\[
    1 - 2(p \vee n)^{-1} - 2n^{-c_3} -2\exp (-c_4 U^{-4} \phi_n ),
\]
it holds 
    \begin{align*}
        & \|\hat{x}  - x^*\|^2 \leq C\sigma^2 s \log(p \vee n) \log \{p/(s+1)\} \\
        & \hspace{0.5cm} +  C\sigma^2(V -  V^*)^2 p/(s+1) \log(p \vee n),
    \end{align*}
    where $c_3$, $c_4 > 0$ are absolute constants.
\end{corollary}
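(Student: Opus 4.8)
We obtain the corollary by combining \Cref{thm-RIC-bound} and \Cref{prop1}, linked by a conditioning argument on the random design matrix and a union bound.

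First I would use \Cref{prop1} to certify that the random matrix $A$ satisfies the restricted isometry condition demanded by \Cref{thm-RIC-bound}. Concretely, apply \Cref{prop1} with scale parameter $t = V$ (or, if the proof of \Cref{thm-RIC-bound} needs RIC control over a slightly larger set --- for instance over $\mathcal{X}(2V)$, since any feasible $\hat x$ in \eqref{eq4} has $\|D(\hat x - x^*)\|_1 \le \|D\hat x\|_1 + \|Dx^*\|_1 \le V + V^* \le 2V$ --- then apply it with $t = 2V$ instead, which only alters condition \eqref{eq5} through its absolute constants). Under the assumptions inherited from \Cref{prop1}, the proposition then yields an event $\mathcal{E}_A$, of probability at least $1 - 2n^{-c_3} - 2\exp(-c_4 U^{-4}\phi_n)$, on which $A$ satisfies the RIC of \Cref{def1} with $\eta_1 = \eta_2 = \zeta$ and with $\rho_1, \rho_2$ both equal to the constant function $t \mapsto 2^{-1}(1-\zeta)^2$ --- exactly the observation recorded in the remark immediately after \Cref{prop1}.

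Next, I would work conditionally on $A$ on the event $\mathcal{E}_A$. This is legitimate because $\epsilon \sim \mathcal{N}(0, \sigma^2 I_n)$ is independent of $A$, so the deterministic-design statement \Cref{thm-RIC-bound} can be invoked separately for each fixed realisation $A \in \mathcal{E}_A$. With the identifications $\eta_1 = \eta_2 = \zeta$ and $\rho_1(2V) = \rho_2(2V) = 2^{-1}(1-\zeta)^2$, the crucial hypothesis $1 - \eta_1 - \rho_1(2V) > c_1$ of \Cref{thm-RIC-bound} is exactly the assumed inequality $1 - \zeta - \rho_1(2V) > c$ of the corollary, while the minimal-spacing condition $\Delta_{p, n} \ge c_0 p/(s+1)$ and the requirement $V \ge V^* = \|Dx^*\|_1$ appear verbatim in both statements. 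Hence, for every $A \in \mathcal{E}_A$, \Cref{thm-RIC-bound} delivers the asserted bound on $\|\hat x - x^*\|^2$ with conditional probability at least $1 - 2(p \vee n)^{-1}$.

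Finally, a union bound over $\mathcal{E}_A^c$ and, on $\mathcal{E}_A$, the conditional event that the bound fails shows that $\|\hat x - x^*\|^2$ obeys the asserted inequality with probability at least $1 - 2(p \vee n)^{-1} - 2n^{-c_3} - 2\exp(-c_4 U^{-4}\phi_n)$, which is precisely the probability in the corollary's statement. There is no genuine analytical obstacle here --- the corollary just packages the two preceding results --- so the only points that deserve attention are: choosing the scale when invoking \Cref{prop1} so that the RIC it delivers covers whatever subset of $\mathbb{R}^p$ enters the proof of \Cref{thm-RIC-bound}; checking that the constant $\rho$-functions supplied by \Cref{prop1} make the corollary's hypotheses coincide with those of \Cref{thm-RIC-bound}; and making the conditioning on $A$ rigorous via independence of the design and the Gaussian noise.
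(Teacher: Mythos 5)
Your proposal is correct and matches the paper's (implicit) argument: the paper presents this corollary as an immediate consequence of combining \Cref{thm-RIC-bound} with \Cref{prop1}, which is exactly the conditioning-plus-union-bound packaging you describe, including the observation that \Cref{prop1} certifies the $(\zeta,\zeta,2^{-1}(1-\zeta)^2,2^{-1}(1-\zeta)^2,t)$-RIC. Your extra care about invoking \Cref{prop1} at scale $t=2V$ so that the RIC covers the normalised difference $(\hat x - x^*)/\|\hat x - x^*\|$ is a sensible refinement that the paper leaves implicit.
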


\subsection{A Penalised Estimator}\label{sec-penalise-theory}

Recall that in 
\cite{tibshirani2005sparsity}  the fused lasso estimator was introduced as 
    \begin{equation}\label{eq-fused-lasso-def}
        \tilde{x} = \argmin_{x \in \mathbb{R}^p} \left\{\|y - Ax\|^2 + \lambda_1 \|x\|_1 + \lambda_2 \|Dx\|_1\right\},
    \end{equation}
    where $\lambda_1, \lambda_2 > 0$ are tuning parameters.  The estimator $\tilde{x}$ is rooted in the belief that $x^*$ is both piecewise-constant and entrywise-sparse.  In this section, we are to establish theoretical guarantees on  the denoising performance of $\tilde{x}$, which, to the best of our knowledge, is not seen in the existing literature.
   
\begin{theorem}\label{thm_penalise_RIC_bound}
Assume that the data are from \Cref{assume-model}.  Let $H = \{j: \, x^*_j \neq 0\} \subset \{1, \ldots, p\}$, with $|H| = h$.  Assume that the design matrix $A$ has rows $\{n^{-1/2} a_i\}_{i = 1}^n$, where $\{a_i\}_{i = 1}^n$ are independent and identically distributed as $\mathcal{N}(0, \Sigma)$, with $\sigma_{\min}(\Sigma) \geq l > 0$ and $\|\Sigma\|_{\infty} \leq \rho$, where $l$ and $\rho$ are absolute constants.  Suppose that $\epsilon \sim \mathcal{N}(0, \sigma^2 I_n)$ and $(s + h) \log(p) \leq C_1 n$, where $C_1 > 0$ is a sufficiently large constant.  
    
Let $\tilde{x}$ be defined in \eqref{eq-fused-lasso-def} with $\lambda_1 = C_{\lambda_1} \sigma\sqrt{\rho\log (p \vee n)}$ and $\lambda_2 = C_{\lambda_2} \sigma\sqrt{\log (p \vee n)}$, where $C_{\lambda_1}, C_{\lambda_2} > 0$ are sufficiently large absolute constants.  It holds, with probability at least $1 - c_1 (p \vee n)^{-1}- p\exp(-n) - c_2\exp(-c_3n)$, that 
    \begin{equation}\label{eq-fused-lasso-thm-result}
        \|\tilde{x} - x^*\|^2 \leq  C  \sigma^2 \log(p \vee n) (s \vee h),
    \end{equation}
    where $C > 0$ is an absolute constant.
\end{theorem}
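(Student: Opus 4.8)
The plan is to run the classical basic-inequality argument for penalised least squares, use the two regularisers to produce a single cone condition, and then close the bound with a restricted eigenvalue (RE) property of the Gaussian design.

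\textbf{Basic inequality and penalty bookkeeping.} Set $\delta = \tilde x - x^*$ and substitute $y = Ax^* + \epsilon$ into the optimality inequality $\|y-A\tilde x\|^2 + \lambda_1\|\tilde x\|_1 + \lambda_2\|D\tilde x\|_1 \le \|y-Ax^*\|^2 + \lambda_1\|x^*\|_1 + \lambda_2\|Dx^*\|_1$ coming from \eqref{eq-fused-lasso-def}, which gives
\[
\|A\delta\|^2 \le 2\epsilon^\top A\delta + \lambda_1(\|x^*\|_1 - \|\tilde x\|_1) + \lambda_2(\|Dx^*\|_1 - \|D\tilde x\|_1).
\]
Since $x^*$ is supported on $H$ and $Dx^*$ on $S = S(x^*)$, splitting each $\ell_1$ norm over these sets and using the triangle inequality yields
\[
\|A\delta\|^2 \le 2\epsilon^\top A\delta + \lambda_1(\|\delta_H\|_1 - \|\delta_{H^c}\|_1) + \lambda_2(\|(D\delta)_S\|_1 - \|(D\delta)_{S^c}\|_1).
\]

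\textbf{Noise control and the cone.} I would absorb the whole stochastic term into the $\ell_1$ direction, $2\epsilon^\top A\delta \le 2\|A^\top\epsilon\|_\infty\|\delta\|_1$. Since $A$ has rows $n^{-1/2}a_i$ with $a_i\sim\mathcal N(0,\Sigma)$ and $\epsilon\sim\mathcal N(0,\sigma^2 I_n)$ independent, each coordinate of $A^\top\epsilon$ is, conditionally on $\epsilon$, centred Gaussian with variance $\Sigma_{jj}\|\epsilon\|^2/n \le \rho\|\epsilon\|^2/n$; combining this with $\|\epsilon\|^2\lesssim n\sigma^2$ (a $\chi^2_n$ bound, valid since $\log p\lesssim n$) and a union bound over the $p$ coordinates shows $\|A^\top\epsilon\|_\infty \le \lambda_1/4$ on an event of probability at least $1 - c_1(p\vee n)^{-1} - pe^{-n}$ once $C_{\lambda_1}$ is large enough. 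On this event the displayed inequality becomes $\|A\delta\|^2 \le \tfrac32\lambda_1\|\delta_H\|_1 - \tfrac12\lambda_1\|\delta_{H^c}\|_1 + \lambda_2\|(D\delta)_S\|_1 - \lambda_2\|(D\delta)_{S^c}\|_1$, which gives at once the budget bound $\|A\delta\|^2 \le \tfrac32\lambda_1\|\delta_H\|_1 + \lambda_2\|(D\delta)_S\|_1$ and, since $\|A\delta\|^2\ge 0$, the cone inequality $\tfrac12\lambda_1\|\delta_{H^c}\|_1 + \lambda_2\|(D\delta)_{S^c}\|_1 \le \tfrac32\lambda_1\|\delta_H\|_1 + \lambda_2\|(D\delta)_S\|_1$. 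Using $\|\delta_H\|_1\le\sqrt h\|\delta\|$, $\|(D\delta)_S\|_1\le\sqrt s\|D\delta\|\le 2\sqrt s\|\delta\|$ (as $\|D\|_{\mathrm{op}}\le 2$), and $\lambda_1\asymp\lambda_2$ ($\rho$ constant), the cone inequality forces $\|\delta\|_1 \le C'\sqrt{s+h}\,\|\delta\|$. (One can instead split $\epsilon^\top A\delta$ between $\|\delta\|_1$ and $\|D\delta\|_1$ by summation by parts, which is where the matching lower bound on $\lambda_2$ would be used; this does not change the outcome.)

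\textbf{Restricted eigenvalue --- the main obstacle.} It remains to show $\|A\delta\|^2 \ge c\|\delta\|^2$ for every $\delta$ with $\|\delta\|_1 \le C'\sqrt{s+h}\,\|\delta\|$, with high probability. Here the Gaussianity of the design is essential: $\mathbb E\|A\delta\|^2 = \delta^\top\Sigma\delta \ge l\|\delta\|^2$, and a uniform one-sided deviation over $\{\delta : \|\delta\|=1,\ \|\delta\|_1\le C'\sqrt{s+h}\}$ follows from a Gaussian small-ball / Gordon-type argument whose Gaussian-complexity term for this $\ell_1$-cone is of order $(s+h)\log p$; hence the bound holds off an event of probability $c_2 e^{-c_3 n}$, precisely under the hypothesis $(s+h)\log p\lesssim n$. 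This is the technically heaviest part, as it is a uniform control of a random design over a set built from both entrywise and total-variation sparsity; the reduction to a plain $\ell_1$-cone in the previous step is what lets one invoke an off-the-shelf Gaussian RE bound rather than redo the metric-entropy computation behind \Cref{prop1}.

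\textbf{Conclusion.} On the intersection of the two good events, the budget bound and the RE estimate give
\[
c\|\delta\|^2 \le \|A\delta\|^2 \le \tfrac32\lambda_1\sqrt h\,\|\delta\| + 2\lambda_2\sqrt s\,\|\delta\|,
\]
so $\|\delta\| \lesssim \lambda_1\sqrt h + \lambda_2\sqrt s$ and therefore
\[
\|\delta\|^2 \lesssim \lambda_1^2 h + \lambda_2^2 s \asymp \sigma^2\rho\log(p\vee n)\,h + \sigma^2\log(p\vee n)\,s \lesssim \sigma^2\log(p\vee n)\,(s\vee h),
\]
using $\rho = O(1)$ and $s+h\le 2(s\vee h)$. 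A union bound over the events of the last two steps produces the stated probability $1 - c_1(p\vee n)^{-1} - pe^{-n} - c_2 e^{-c_3 n}$; the degenerate case $h=0$ (hence $x^*=0$, $s=0$) is covered since the cone inequality then forces $\delta=0$.
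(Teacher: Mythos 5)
Your proposal is correct and follows essentially the same route as the paper: the basic inequality, the bound $2\|A^\top\epsilon\|_\infty\le\lambda_1/2$ via the independence of the Gaussian design and noise, a cone inequality giving $\|\tilde x-x^*\|_1\lesssim\sqrt{s+h}\,\|\tilde x-x^*\|$, and a Gaussian restricted-eigenvalue bound valid under $(s+h)\log p\lesssim n$. The only (cosmetic) difference is ordering: you restrict to the cone first and invoke an RE bound on it, whereas the paper applies the uniform inequality $\|A\delta\|^2\ge \tfrac{l}{32}\|\delta\|^2-81\rho^2\tfrac{\log p}{n}\|\delta\|_1^2$ (Lemma 9 of \cite{zhang2015change}) to all of $\mathbb{R}^p$ and then substitutes the cone bound on $\|\delta\|_1^2$ — the same underlying result from the Gaussian RE literature.
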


We remark that, \Cref{thm_penalise_RIC_bound} also relies on some form of restricted eigenvalue conditions, although we do not assume it explicitly as what we do in \Cref{thm-RIC-bound}.  Due to the Gaussian design, we in fact show that a form of RIC holds with large probability and leads to the success of \Cref{thm_penalise_RIC_bound} \citep[see e.g.~Lemma 9 in][]{zhang2015change}. 

The result \eqref{eq-fused-lasso-thm-result}, in fact, reads as $\|\tilde{x} - x^*\|^2 \lesssim h\lambda_1^2 + s \lambda_2^2$, with the two terms in the upper bound corresponding to the optimal estimation error bounds in the cases solely under the entrywise sparsity and piecewise-constant assumptions, respectively.  \Cref{thm_penalise_RIC_bound} unveils a phase transition phenomenon that when $h \gtrsim s$, the lasso rate dominates and \emph{vice versa}. 

The estimator in \eqref{eq-fused-lasso-def} is within the category of doubly-penalised estimators.  The additive type upper bound shown in \Cref{thm_penalise_RIC_bound} echos the general phenomenon for doubly-penalised estimators.  For instance, in high-dimensional linear regression problems, \cite{hebiri2011smooth} studied an estimator penalised by both $\ell_1$- and $\ell_2$-penalties, which is a generalisation of the elastic net estimator \citep[e.g.][]{zou2006sparse}, and derived an upper bound in the form of the sum of that on lasso and ridge estimators.  In the high-dimensional functional data analysis literature, with potentially many different functional covariates, \cite{wang2020functional} showed that the prediction upper bound is the sum of an upper bound related with the smoothness penalty and an upper bound related with the high-dimensionality.

\section{CHANGE POINT LOCALISATION AND POST-PROCESSING PROCEDURES} \label{sec-post-processing}

In high-dimensional linear regression problems, in addition to controlling the estimation error of estimators as in \Cref{sec-penalise-theory}, when it is believed that the regression coefficients are piecewise-constant, e.g.~there exist unknown group structures in the predictors, it is also of particular interest to recover such structures by localising change points.  For instance, in group lasso estimation, such group structures are assumed known.  An accurate estimation of the group structures can be a pre-processing step for a group lasso estimation.

In this section, the goal is to obtain a consistent change point estimator $\widehat{S}$, such that 
\begin{equation}\label{eq-consistency-def-est}
 d_{\mathrm{H}} (\widehat{S}, S) \leq \varepsilon_{p, n} \quad \mbox{and} \quad |\widehat{S}| = |S|,
\end{equation}
where $\varepsilon_{p, n}/p \to 0$, with probability tending to one as $n \to \infty $.  We refer to $\varepsilon_{p, n}$ as the localization error.  In view of the two criteria detailed in \eqref{eq-consistency-def-est}, the following two subsections focus on the guarantees on the localisation error and estimated number of change points, respectively.

\subsection{The Localisation Error}\label{mean filter}

When the design matrix $A$ is the identity, Theorem~4 of  \cite{lin2017sharp} shows that an upper bound on the estimation error  $\|\hat{x} - x^*\|$ immediately guarantees a control on the localisation error in terms of a one-sided Hausdorff distance $d \left(S \left( \hat x\right)| S \right)$.  Combining this result with  \Cref{thm-RIC-bound} (or \Cref{cor-prop+thm}), yields that, with probability at least $1 - 2( p \vee n)^{-1}$,
    \begin{align*}
        & d \left( S \left(\hat x\right) | S \right)  \lesssim \sigma^2 s \log ( p \vee n ) \log \{ p/(s+1) \}\kappa_{p,n}^{-2}\\
        & \hspace{0.5cm} + \sigma^2 (V -  V^*)^2 p/ (s+1) \log(p \vee n)\kappa_{p,n}^{-2}. 
    \end{align*}
Furthermore, \cite{lin2017sharp} point out that the fused lasso estimator $\hat{x}$, lacks a two-sided Hausdorff distance control because typically $|S(\hat{x})| > s$.  This issue with the over-estimation is partially tackled in \cite{lin2017sharp}.  There, the authors showed that a mean filtering post-processing procedure is effective at removing spurious estimated change points that are far away from the true change points. This post-processing step,  originally analysed by \cite{lin2017sharp} assuming an identity design matrix, can in fact be used for general design cases without modification. In detail, 
let $I_F(\hat x) \subset \{ b_{p,n}, \dots, p-b_{p,n}\}$ such that
    \begin{align}\label{eq-mean-filter-def-2}
        & I_F(\hat x) = \{i: \, i \in S(\hat x) \mbox{ or } i+b_{p,n} \in S(\hat x) \nonumber \\
        & \hspace{0.5cm} \mbox{or } i-b_{p,n} \in S(\hat x)\} \cup \{b_{p, n}, p-b_{p,n}\},
    \end{align}
    with an integer bandwidth $b_{p, n}>0$.  For each  $i \in \{ b_{p,n}, \dots, p-b_{p,n}\}$, set
    \begin{equation}\label{eq-mean-filter-def-1}
        F_i(\hat x) = \frac{1}{b_{p,n}} \sum_{j=i+1}^{i+b_{p,n}} \hat x_j - \frac{1}{b_{p,n}} \sum_{j=i-b_{p,n}+1}^{i} \hat x_j.
    \end{equation}
   The set of estimates change point is defined as the 
    \begin{equation}\label{eq-mean-filter-def-3}
        S_I(\hat x) = \{i \colon |F_i(\hat x)\vert \geq \tau_{p, n} \} \subset I_F(\hat x),
    \end{equation}
    where $\tau_{p,n} > 0$ is a pre-specified threshold.  An adaptation to Theorem~5 in \cite{lin2017sharp} leads to the following result.

\begin{proposition}\label{thm_two_side_hausdorff_distance}
Assume all the conditions in \Cref{thm-RIC-bound} hold.  Let 
\begin{align*}
    R_{p, n} = C \sigma^2 s \log(p \vee n) \log \{p/(s+1)\} \\
    + C \sigma^2 (V -  V^*)^2 p/(s+1) \log(p \vee n)
\end{align*}
and assume that there exists a sufficiently large absolute constant $C_{\mathrm{SNR}} > 0$ such that $\kappa^2_{p, n}\Delta_{p, n} > C_{\mathrm{SNR}} R_{p, n}$.
For $S_I(\hat{x})$ defined through \eqref{eq-mean-filter-def-2}, \eqref{eq-mean-filter-def-1}, and \eqref{eq-mean-filter-def-3}, with $\tau_{p,n}$ and $b_{p,n}$ satisfying that $\tau_{p, n} = c_{\tau} \kappa_{p, n}$, for some constant $c_{\tau} \in \left(0, 1 \right)$ and 
\[
b_{p, n} = c R_{p,n}\{c_{\tau} \wedge (1 - c_{\tau})\}^{-2}\kappa_{p, n}^{-2}  \leq \Delta_{p, n}/4,
\]
for a sufficiently large absolute constant $c > 0$, it holds with probability at least $1 - 2( p \vee n)^{-1}$ that, with a sufficiently large absolute constant $C_1 > 0$,
\begin{align*}
    & d_\mathrm{H} \left( S_I (\hat x), \, S \right) \leq C_1\sigma^2 s \log (  p \vee n ) \log \{ p/(s+1) \}\kappa_{p,n}^{-2} \\
    & \hspace{0.5cm} + C_1\sigma^2 (V -  V^*)^2 p/(s+1 ) \log(p \vee n)\kappa_{p,n}^{-2}. 
\end{align*}
\end{proposition}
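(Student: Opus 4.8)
The plan is to follow the strategy of \cite{lin2017sharp}, exploiting the fact that the mean filter \eqref{eq-mean-filter-def-1} operates only on the estimator $\hat{x}$ and not on $(A,y)$ directly, so the general design enters the argument solely through the estimation error bound of \Cref{thm-RIC-bound}. Throughout I would work on the event of probability at least $1-2(p\vee n)^{-1}$ on which \emph{both} $\|\hat x - x^*\|^2 \le R_{p,n}$ and the one-sided localisation control $d(S(\hat x)\,|\,S) \le \gamma_{p,n} := C' R_{p,n}\kappa_{p,n}^{-2}$ hold, the latter being the consequence of Theorem~4 of \cite{lin2017sharp} already recorded above \Cref{thm_two_side_hausdorff_distance}. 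The SNR hypothesis $\kappa_{p,n}^2\Delta_{p,n} > C_{\mathrm{SNR}}R_{p,n}$ and the prescribed choices $\tau_{p,n}=c_\tau\kappa_{p,n}$, $b_{p,n}=c R_{p,n}\{c_\tau\wedge(1-c_\tau)\}^{-2}\kappa_{p,n}^{-2}\le\Delta_{p,n}/4$ jointly give, for $c$ large relative to $c_\tau$ and $C'$, the chain $\gamma_{p,n}\le b_{p,n}\le\Delta_{p,n}/4$ used repeatedly below.

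Two elementary estimates drive the argument. First, writing $w=\hat x-x^*$, one has $F_i(\hat x)-F_i(x^*)=F_i(w)=b_{p,n}^{-1}\langle w,u_i\rangle$ for a vector $u_i$ with $\|u_i\|=\sqrt{2b_{p,n}}$, so Cauchy--Schwarz yields $|F_i(\hat x)-F_i(x^*)|\le\sqrt{2/b_{p,n}}\,\|w\|\le\sqrt{2R_{p,n}/b_{p,n}}=\sqrt{2/c}\,\{c_\tau\wedge(1-c_\tau)\}\kappa_{p,n}$ uniformly in $i$. Second, since $b_{p,n}\le\Delta_{p,n}/4$ each of the two length-$b_{p,n}$ windows defining $F_i(x^*)$ meets at most one true change point, whence: (a) if $\min_{t\in S}|i-t|>b_{p,n}$ then both windows lie in a single constant piece of $x^*$ and $F_i(x^*)=0$; and (b) if $|i-t_k|\le\gamma_{p,n}$ for some $k$, then a short computation gives $|F_i(x^*)|=b_{p,n}^{-1}(b_{p,n}-|i-t_k|)\kappa_k\ge(1-\gamma_{p,n}/b_{p,n})\kappa_{p,n}$, using $\gamma_{p,n}+b_{p,n}<\Delta_{p,n}$ to rule out any other change point in either window.

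With these in hand I bound the two one-sided Hausdorff distances. For $d(S_I(\hat x)\,|\,S)$: if $i\in S_I(\hat x)$ had $\min_{t\in S}|i-t|>b_{p,n}$, then (a) and the first estimate would force $|F_i(\hat x)|=|F_i(w)|\le\sqrt{2/c}\,\{c_\tau\wedge(1-c_\tau)\}\kappa_{p,n}<c_\tau\kappa_{p,n}=\tau_{p,n}$ (any $c>2$), contradicting $i\in S_I(\hat x)$; hence every estimated change point lies within $b_{p,n}$ of $S$, i.e.\ $d(S_I(\hat x)\,|\,S)\le b_{p,n}$. For $d(S\,|\,S_I(\hat x))$: fix $t_k$; the one-sided control provides $\hat t_k\in S(\hat x)$ with $|\hat t_k-t_k|\le\gamma_{p,n}\le b_{p,n}$, and since $t_k\in\{\Delta_{p,n},\dots,p-\Delta_{p,n}\}$ with $\Delta_{p,n}\ge 4b_{p,n}$ we get $\hat t_k\in\{b_{p,n},\dots,p-b_{p,n}\}$, so $\hat t_k\in S(\hat x)\subset I_F(\hat x)$ and $F_{\hat t_k}(\hat x)$ is defined; combining (b) at $i=\hat t_k$ with the first estimate gives $|F_{\hat t_k}(\hat x)|\ge(1-\gamma_{p,n}/b_{p,n})\kappa_{p,n}-\sqrt{2/c}\,\{c_\tau\wedge(1-c_\tau)\}\kappa_{p,n}\ge c_\tau\kappa_{p,n}=\tau_{p,n}$ once $c$ is large enough depending only on $c_\tau$ (using $\gamma_{p,n}/b_{p,n}\lesssim 1/c$), so $\hat t_k\in S_I(\hat x)$ and $d(S\,|\,S_I(\hat x))\le\max_k|\hat t_k-t_k|\le\gamma_{p,n}$. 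Combining, $d_{\mathrm H}(S_I(\hat x),S)\le\max\{b_{p,n},\gamma_{p,n}\}=b_{p,n}\le C_1 R_{p,n}\kappa_{p,n}^{-2}$ after absorbing the fixed constants $c,\{c_\tau\wedge(1-c_\tau)\}^{-2}$ into $C_1$, which is exactly the asserted bound.

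The main obstacle is not a single hard inequality but the bookkeeping around the candidate set $I_F(\hat x)$: one must argue that $S(\hat x)$, which is \emph{a priori} known only to be close to $S$ in one-sided Hausdorff distance and may have cardinality far exceeding $s$, nonetheless contributes, near each true change point, a grid point on which the filtered value exceeds $\tau_{p,n}$, while no grid point far from $S$ survives the threshold. Tracking the interplay of constants ($c$ versus $c_\tau$, and the absolute constants hidden in $\gamma_{p,n}$ and $b_{p,n}$) so that the SNR condition $\kappa_{p,n}^2\Delta_{p,n}>C_{\mathrm{SNR}}R_{p,n}$ simultaneously delivers $\gamma_{p,n}\le b_{p,n}\le\Delta_{p,n}/4$ and the strict separation of $\tau_{p,n}$ from both $0$ and $\kappa_{p,n}$ is the delicate part; boundary cases (change points or filter windows near $1$ or $p$) are controlled by the minimal-spacing hypothesis $\Delta_{p,n}\ge c_0p/(s+1)$, exactly as in \cite{lin2017sharp}.
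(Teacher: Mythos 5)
Your proposal is correct and takes the same route the paper does: the paper gives no standalone proof of this proposition, stating only that it is ``an adaptation of Theorem~5 in \cite{lin2017sharp}'' fed by the estimation bound of \Cref{thm-RIC-bound}, and your argument is precisely that adaptation carried out in detail (mean filter as an inner product with a $\pm 1$ vector of norm $\sqrt{2b_{p,n}}$, the signal computation $|F_i(x^*)|=b_{p,n}^{-1}(b_{p,n}-|i-t_k|)\kappa_k$ near a change point and $F_i(x^*)=0$ far from all change points, then the threshold separation via the choice of $c$). The only blemish is notational: you label the two one-sided Hausdorff distances with the opposite convention from the paper's definition $d(M_1|M_2)=\max_{m_2\in M_2}\min_{m_1\in M_1}|m_1-m_2|$, but since you bound both directions this does not affect the conclusion.
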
 

As we can see from \Cref{thm_two_side_hausdorff_distance}, the mean filter smooths the fused lasso estimator and delivers 
an estimator satisfying the first criterion in \eqref{eq-consistency-def-est}.
When $V = V^*$, we see that the localisation rate is, aside from  a poly-logarithmic factor, of order $\sigma^2 \kappa^{-2}_{p, n} s$.  This type of guarantee is consistent with analogous results established in  array of change point detection problems and implies the near-optimality of our results. Indeed, in change point detection problems, minimax lower bounds on the localisation errors are usually of the form of $\sigma^2 \kappa^{-2}$ and their upper bounds obtained by polynomial-time algorithms are usually in the form of $\sigma^2 \kappa^{-2} \times \mbox{a sparsity parameter} \times \mbox{a logarithmic factor}$, see \cite{yu2020review}.

\subsection{The Number of Change Points} \label{sec_number_of_change_points}
The output $S_I(\hat{x})$ is, as pointed out in \cite{lin2017sharp}, still an over-estimator of $S$, due to the lack of control on the estimated change points around the true change points.  To obtain a consistent change point estimator, we further prune the estimator with an additional time filter, detailed below.

Provided that $S_I(\hat{x}) \neq \emptyset$, we sort it as $\{\hat{t}_1, \ldots, \hat{t}_M\}$, with $\hat{t}_i < \hat{t}_j$, $i < j$.  If $M = 1$, we let $S_T(\hat{x}) = S_I(\hat{x})$.  If $M \geq 2$, we let $D_m = \hat{t}_{m+1} - \hat{t}_m$, $m \in \{1, \ldots, M - 1\}$.  With a pre-specified tuning parameter $t_{p, n} > 0$, let $\widetilde{S} = \{m:\, D_m > t_{p, n}\} = \{s_1, \ldots, s_{|\widetilde{S}|}\}$,   which partitions $S_I(\hat{x})$ into $|\widetilde{S}|+1$ segments, that is 
\begin{align*}
    S_I(\hat{x}) & = \{\hat{t}_1, \ldots, \hat{t}_{s_1}\} \cup \cdots \cup \{\hat{t}_{s_{|\widetilde{S}|} + 1}, \ldots, \hat{t}_M\} \\
    & = \cup_{u = 1}^{|\widetilde{S}|+1} \widetilde{S}_u.
\end{align*}
If $|\widetilde{S}| = 0$, then we have that $S_I(\hat{x}) = \widetilde{S}_1$.  Finally, denote that
\begin{equation}\label{eq-sthatx-def}
    S_T(\hat{x}) = \{\mathrm{median}(\widetilde{S}_u), \, u  = 1, \ldots, |\widetilde{S}|+1\},
\end{equation}
where we take the convention that the median of a set is a member in this set and when there are two different medians, we take the smaller one for uniqueness.  In the following we show that $S_T(\hat{x})$ provides a consistent estimation on the number of change points.

\begin{proposition}\label{thm_number_change_points}
Assume all the conditions in \Cref{thm_two_side_hausdorff_distance} hold.  Let $S_T(\hat{x})$ be defined in \eqref{eq-sthatx-def} with the tuning parameter $t_{p, n} = 2b_{p, n}$.  It holds with probability at least $1 - 2( p \vee n)^{-1}$ that $|S_T(\hat{x})| = |S|$.
\end{proposition}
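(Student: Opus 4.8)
The plan is to convert the two-sided Hausdorff bound of \Cref{thm_two_side_hausdorff_distance} into a deterministic pigeonholing argument describing how the time filter groups the elements of $S_I(\hat{x})$. Throughout I work on the event of probability at least $1-2(p\vee n)^{-1}$ on which the conclusion of \Cref{thm_two_side_hausdorff_distance} holds, and write $R' = d_{\mathrm{H}}(S_I(\hat{x}), S)$; since that conclusion bounds $R'$ by an absolute-constant multiple of $R_{p,n}\kappa_{p,n}^{-2}$ while the bandwidth satisfies $b_{p,n} = c R_{p,n}\{c_\tau\wedge(1-c_\tau)\}^{-2}\kappa_{p,n}^{-2} \geq c R_{p,n}\kappa_{p,n}^{-2}$ for a sufficiently large absolute constant $c$, I may assume $R' \leq b_{p,n}/2$. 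I also take $s\geq 1$ (the case $s=0$ being immediate, since a finite Hausdorff bound then forces $S_I(\hat{x})=\emptyset$ and hence $|S_T(\hat{x})|=0=|S|$); in particular $d(S_I(\hat{x})|S) \leq R' < \infty$ forces $S_I(\hat{x}) \neq \emptyset$, so the construction of $S_T(\hat{x})$ applies.

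The first step is to show that $S_I(\hat{x})$ splits into exactly $s$ nonempty ``clusters''. From $d(S|S_I(\hat{x})) \leq R'$, every $\hat{t}\in S_I(\hat{x})$ lies within $R'$ of some $t_i \in S$, so $S_I(\hat{x}) \subseteq \bigcup_{i=1}^{s}[t_i-R', t_i+R']$; by the minimal spacing condition and $b_{p,n} \leq \Delta_{p,n}/4$ we have $\Delta_{p,n} \geq 4b_{p,n} \geq 8R'$, so these intervals are pairwise disjoint and each $\hat{t}$ belongs to exactly one of them. Setting $C_i = S_I(\hat{x}) \cap [t_i-R', t_i+R']$, the bound $d(S_I(\hat{x})|S) \leq R'$ gives that each $C_i$ is nonempty, hence $S_I(\hat{x}) = \bigsqcup_{i=1}^{s} C_i$; and since $\max C_i \leq t_i+R' < t_{i+1}-R' \leq \min C_{i+1}$, after sorting $S_I(\hat{x}) = \{\hat{t}_1 < \cdots < \hat{t}_M\}$ the blocks $C_1,\ldots,C_s$ occur consecutively.

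The second step analyses the consecutive gaps $D_m = \hat{t}_{m+1}-\hat{t}_m$ against the threshold $t_{p,n}=2b_{p,n}$. If $\hat{t}_m, \hat{t}_{m+1}$ lie in the same block $C_i$, both lie in an interval of length $2R' \leq b_{p,n}$, so $D_m \leq b_{p,n} < t_{p,n}$. If $\hat{t}_m \in C_i$ and $\hat{t}_{m+1} \in C_{i+1}$, then $D_m \geq (t_{i+1}-t_i)-2R' \geq \Delta_{p,n}-b_{p,n} \geq 3b_{p,n} > t_{p,n}$. Hence $\widetilde{S} = \{m: D_m > t_{p,n}\}$ is exactly the set of the $s-1$ cross-block indices, so $|\widetilde{S}| = s-1$, the induced partition of $S_I(\hat{x})$ into $|\widetilde{S}|+1 = s$ segments $\widetilde{S}_1,\ldots,\widetilde{S}_s$ coincides with $C_1,\ldots,C_s$, and therefore $S_T(\hat{x}) = \{\mathrm{median}(\widetilde{S}_u): u=1,\ldots,s\}$ contains exactly $s = |S|$ elements; the degenerate branches $M=1$ and $|\widetilde{S}|=0$ in the construction are forced by $s=1$ and are consistent with this.

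I expect no serious conceptual obstacle; the argument is essentially a pigeonholing of $S_I(\hat{x})$ into well-separated windows centred at the true change points. The only point requiring care is the bookkeeping of constants: one must verify that the localisation error $R'$ delivered by \Cref{thm_two_side_hausdorff_distance} is genuinely a small fraction of $b_{p,n}$ (which is precisely why $c$ in the definition of $b_{p,n}$ is taken sufficiently large), and then use $b_{p,n}\leq\Delta_{p,n}/4$ to keep within-block gaps strictly below $t_{p,n}$ and between-block gaps strictly above it. It is also worth noting that the extra endpoints $\{b_{p,n}, p-b_{p,n}\}$ appearing in the definition of $I_F(\hat{x})$ cause no difficulty: any such point that survives into $S_I(\hat{x})$ is, by the Hausdorff bound, within $R'$ of a true change point, and is therefore absorbed into one of the clusters $C_i$.
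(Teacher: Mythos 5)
Your proof is correct and follows essentially the same route as the paper's: both work on the high-probability event from \Cref{thm_two_side_hausdorff_distance}, cluster the points of $S_I(\hat{x})$ into well-separated windows around the true change points using the Hausdorff bound together with $\Delta_{p,n}\geq 4b_{p,n}$, and then show the threshold $t_{p,n}=2b_{p,n}$ cuts exactly between clusters so that each contributes a single median. Your explicit within-block versus cross-block gap comparison (and the sharpened bound $R'\leq b_{p,n}/2$) merely spells out the step the paper compresses into ``by construction of $S_T(\hat x)$ and $t_{p,n}=2b_{p,n}$, we have $|T_{t_i}|\leq 1$''.
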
 

With a further post-processing step based upon the mean filter proposed in \cite{lin2017sharp} and with properly chosen tuning parameters, we have now met the second criterion in 
\eqref{eq-consistency-def-est}.  Note that $S_T(\hat{x}) \subset S_I(\hat{x})$.  We further have that $S_T(\hat{x})$ is consistent in terms of \eqref{eq-consistency-def-est}.  The theoretical tuning parameters are functions of unknown parameters.  We discuss how to choose the tuning parameters in practice in \Cref{sec-numerical}.  

\section{NUMERICAL EXPERIMENTS}\label{sec-numerical}

Our focus for now is to study the piecewise-constant and potentially very dense regression coefficients.  We have shown that a fused lasso (FL) estimator is enough for the denoising purpose, which implies good prediction ability.  To learn the piecewise-constant pattern, a mean filtering post-processing based on the fused lasso (FLMF) achieves a nearly-optimal localisation error; to accurately partition the coefficients into groups, a further step based on a time filter (FLMTF) is necessary.  In this section, we study the numerical performances of the aforementioned three estimators.

The FL used to obtain our numerical results, is a penalised estimator, not the constrained estimator we focus on in \Cref{constrained_estimator}. This choice was made out of convenience, due to the availability of the well-written R package genlasso \citep{genlasso}.  We acknowledge this inconsistency of the paper.  The FL estimators are solved with tuning parameter chosen from a 5-fold cross-validation. 

As instructed in \cite{lin2017sharp}, the tuning parameter $b_{p, n}$ in FLMF is chosen to be $\lfloor 0.25 \log^2(p) \rfloor$ and $\tau_{p, n}$ is chosen via a permutation-based algorithm.  For FLMTF, the additional tuning parameter is chosen to be $t_{p, n} = 2 \times \lfloor 0.25 \log^2(p)\rfloor$.  The main competitor is an $\ell_0$-penalised estimator via the ITerative Alpha Expansion (ITALE) algorithm \citep{xu2019iterative}, the tuning parameter therein is chosen through a $5$-fold cross-validation.

\subsection{Simulation Studies} \label{sec:sim}

Four types of design matrices are considered: \textbf{(1)} Identity matrices.  \textbf{(2)} Band matrices.  Let $A$ be $A_{ij} = 0$, if $|i - j| > h$, and $A_{ij} \stackrel{\mbox{i.i.d.}}{\sim} \mathcal{N}(0, 1)$, if $|i - j| \leq h$, where $h \in \mathbb{N}$.  \textbf{(3)} Gaussian random matrices with identity covariance matrices.  Each row of $A$ is i.i.d.~from $\mathcal{N}(0, I_p)$.  \textbf{(4)} Gaussian random matrices with band covariance matrices.  Each row of $A$ is i.i.d.~from $\mathcal{N}(0, \Sigma)$, where $\Sigma$ is a band matrix with bandwidth $h$.  Each type of design matrices includes three scenarios: \textbf{(1)} one change point; \textbf{(2)} nine equally-spaced change points; and \textbf{(3)} nine unequally-spaced change points.  All details are deferred to the supplementary materials.  

For each setting, we fix $p = 1000$ and let the noise be from $\mathcal{N}(0, \sigma^2 I_n)$.  Each combination of parameters is repeated $100$ times.  Each reported result is in the form of mean and standard deviation.  We vary the jump size (denoted as a constant multiplied by $\gamma$ in this section), sample size, variance $\sigma^2$ and other model specific parameters to demonstrate a wide range of situations.  All results can be found in the supplementary materials.  

In \Cref{Fig_simulation_band_matrix}, we only present a few examples to convey the key messages.  Representative results of cases on four different types of design matrices are shown in the four rows of \Cref{Fig_simulation_band_matrix}.  From left to right, the columns are the results on the mean squared errors reflecting the denoising performances, the Hausdorff distances and $|S_T(\hat{x}) - S|$ reflecting the change point estimation performances.  Generally speaking, although the $\ell_0$ estimators outperform $\ell_1$ estimators theoretically, across the board, our three estimators concerned have comparable numerical performances to ITALE.  We would like to highlight two observations.  Firstly, in terms of the change point estimation performances, we see a clear improvement from the two filters.  Secondly, in the second and fourth rows, which correspond to band matrices in the designs, our estimators uniformly outperform the competitor ITALE.  This suggests a possible regime where our RIC is more flexible than the c-RIP.

\begin{figure}[h] 
\vspace{.3in}
\centering 
\includegraphics[width=0.47\textwidth]{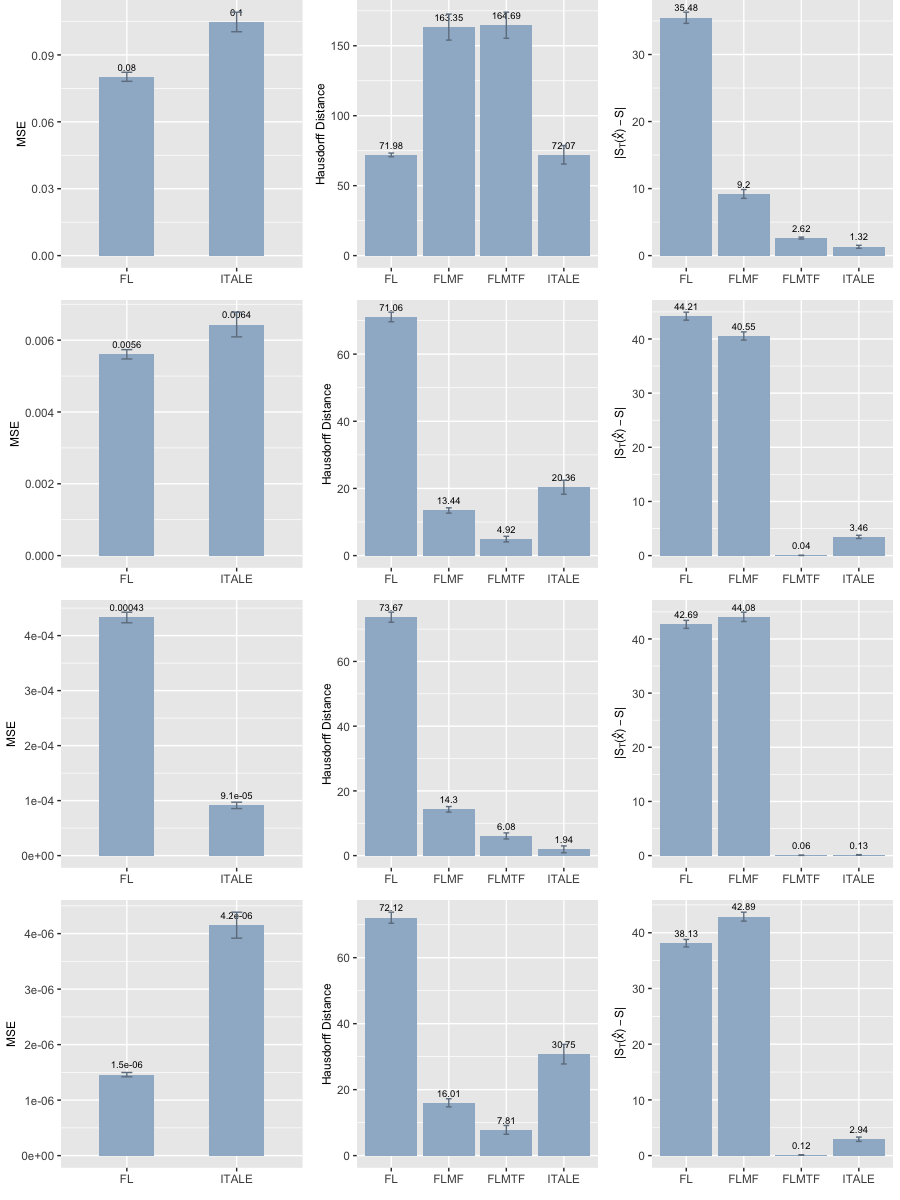}
\vspace{.3in}
\caption{From top to bottom: Identity matrix with nine equally spaced change points with  $\gamma = 1$, $\sigma=2$; Band design matrix with nine equally spaced change points with $\gamma = 1$, $\sigma=2$ and $h = 10$; Gaussian random design matrix with identity covariance matrix and nine equally spaced change points with  $\gamma = 1$, $\sigma=2$ and $n = 500$; Gaussian random design matrix with band covariance matrix and nine equally spaced change points with $n = 500$, $\gamma = 1$, $\sigma=2$ and $h = 50$.  From left to right: the mean squared errors; the Hausdorff distance and the number of estimated change points.  Each result is shown in the form of mean and standard error. FL: fused lasso; FLMF: fused lasso with mean filters; FLMTF: fused lasso with mean and time filters; and ITALE: \cite{xu2019iterative}. } \label{Fig_simulation_band_matrix} 
\end{figure}

\subsection{Real Data Analysis} \label{sec:real}

We consider two real data sets in this subsection, focusing on the denoising and change point localisation purposes, respectively.

\noindent \textbf{The cookie dough data set} \citep{osborne1984application, brown2001bayesian, hans2011elastic} contains $n = 72$ cookie dough samples, with the sucrose content as the response and the quantitative near-infrared (NIR) spectroscopy results corresponding to $p = 700$ equally-spaced wavelengths as the predictors.  Because of the finely-partitioned wavelengths predictors, we expect a piecewise-constant but non-sparse pattern of the regression coefficients.

We use this data set to demonstrate the denoising performances of FL.  As for the competitors, we include the classical lasso, the elastic net \citep{zou2006sparse} and ITALE.  The lasso and elastic net estimators are obtained by applying the R \citep{R} package glmnet \citep{glment}, with tuning parameters selected via a 5-fold cross-validation. 

We randomly split the original data set 100 times, each with a training data set of 39 observations and a test data set with 31 observations, after removing the 23rd and 61st observations as outliers \citep{hans2011elastic}.  Data are all centred to eliminate the intercepts, as required in the model assumption, and standardised before estimation to follow suit \citep[see e.g.~Lemma 9 in][]{hans2011elastic}.  The training data sets are used to obtain estimators and the test data sets are used to calculate the mean squared errors.

We collect the results in \Cref{Cookie_dough_data}, including MSE - mean squared errors,  $|\widehat{S}|$ - numbers of estimated change points and $|\widehat{H}|$ - numbers of non-zero estimated coefficients.  We do not include the results of ITALE, which returns an average MSE of 52840.  It can be seen from \Cref{Cookie_dough_data} that fused lasso allows for  very dense regression coefficients, but encourages recovering the piecewise-constant pattern, and most importantly, has the best prediction performances.

\begin{table}[h]
\caption{The results of different methods on the cookie dough data set.  MSE: mean squared errors; $|\widehat{S}|$: numbers of estimated change points; and $|\widehat{H}|$: number of non-zero estimated coefficients.  Each cell is in the form of mean(std.dev).}\label{Cookie_dough_data}
\begin{center}
\begin{tabular}{llll}
& \textbf{FL}  & \textbf{Lasso} & \textbf{Elastic net} \\ \hline
MSE & $0.122(0.091)$ & $0.136(0.097)$ &  $0.138(0.097)$ \\ 
$|\widehat{S}|$ & $10.8(2.40)$ & $35.2(7.21)$ & $330(204)$ \\ 
$|\widehat{H}|$ & $697(15.5)$ & $23.1(5.03)$ & $285(188)$\\  
\end{tabular}
\end{center}
\end{table}

\medskip  
\noindent \textbf{The air quality data set} \citep{airdata} consists of daily average air quality measurements from different cites around the world.  We choose $n = 30$ cities (Amsterdam, Bangkok, Beijing, Changsha, Chongqing, Dalian, Frankfurt, Guangzhou, Harbin, Hefei, Hong Kong, Kunming, Kyoto, Lhasa, London, Los Angeles, Manchester, Nanjing, Osaka, Paris, Sanya, Seoul, Shanghai, Singapore, Hamburg, Suzhou, Tianjin, Xiamen, Xi'an and Tokyo).  Let their Particulate Matter 2.5 ($\text{PM}_{2.5}$) values on $2021$-$07$-$30$ be the responses and let their weekly averaged $\text{PM}_{2.5}$ data from the previous $p = 52$ weeks as their own covariates.  Data are all centred and standardised.  Our goal is to detect potential change points in the coefficients consisting of 52 weeks.

We focus on the behaviours of FLMTF studied in \Cref{sec_number_of_change_points} and the ITALE studied in \cite{xu2019iterative}.  Similar observations of extremely large MSEs of ITALE lead us to discard discussing their results.  The FLMTF detects $4$ change points corresponding to the 6th, 21st, 33rd and 48th weeks, which are inline with the season changes, as depicted in \Cref{Fig_air_quality}.  Note that seasons are commonly defined in two ways \citep{trenberth1983seasons}: astronomical and meteorological seasons.  The start dates of astronomical seasons are 20th March, 20th July, 22nd September and 21st December; and those of the meteorological seasons are 1st March, 1st July, 1st September and 1st December.  In this specific data set, the astronomical seasons start in the 6th, 19th, 31st and 45th weeks, and the meteorological seasons start in the 9th, 22nd, 35th and 48th weeks.

\begin{figure}[h]
\vspace{.3in}
\centering 
\includegraphics[width=0.4\textwidth]{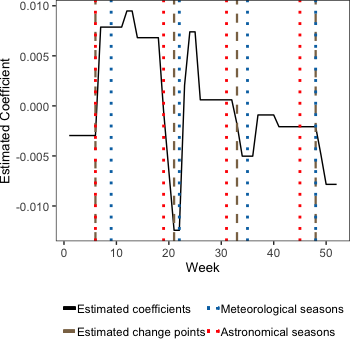}
\vspace{.3in}
\caption{The air quality data set.}\label{Fig_air_quality} 
\end{figure}

\section{Conclusions}

For a high-dimensional linear regression problem with piecewise-constant patterned coefficients, we derived the denoising performances of fused lasso estimators, in the form of both a constrained and penalised estimators.  A novel restricted isometry condition is proposed, with a sufficient condition.  The main results in this paper are only based on the fused lasso penalty, which encourages to recover the piecewise-constant pattern, but allows for a very dense regression coefficient in a high-dimensional setup.  We, however, also derived the denoising performances of the estimator originally proposed in \cite{tibshirani2005sparsity}, which assumes both fused lasso and lasso penalties.  This set of results unveils a phase transition phenomenon and echos the general results of doubly-penalised estimators.  The denoising performance is a guarantee for predictability.  To understand the piecewise-constant pattern and to consistently estimate the change points, we built upon \cite{lin2017sharp} and studied a doubly-filtered estimator, with the fused lasso estimator as the input.  Despite that the $\ell_0$-penalised estimators are theoretically superior than the $\ell_1$-type estimators studied in this paper, we show in some theoretical aspects, we derive sharper theoretical results than those in \cite{xu2019iterative} under some RICs, with a notably improvement in real data analysis.

\subsubsection*{Acknowledgements}
All the authors thanks to the reviewers for constructive comments.  Yu is partially funded by EPSRC EP/V013432/1.  Madrid Padilla and Rinaldo are partially funded by NSF DMS 2015489.

\clearpage
\bibliography{references}


\clearpage
\appendix

\thispagestyle{empty}

\onecolumn \makesupplementtitle

\section{ADDITIONAL NOTATION}

For any vector $v \in \mathbb{R}^p$ and any set $M \subset \{1, \ldots, p\}$, let $v_M = (v_i, i \in M)^{\top} \in \mathbb{R}^{|M|}$ and $v_{-M} = (v_i, i \notin M)^{\top} \in \mathbb{R}^{p - |M|}$ be sub-vectors of $v$.  For any matrix $Q \in \mathbb{R}^{p \times q}$ and any set $M \subset \{1, \ldots, p\}$, let $Q_M$ and $Q_{-M}$ be the submatrices of $Q$ only containing rows indexed by $M$ and $\{1, \ldots, p\} \setminus M$, respectively. For any subspace $W \subset \mathbb{R}^p$, let $P_W$ be the orthogonal projection onto $W$ and let $W^{\perp}$ be its orthogonal complement.  

Let $S^{\prime} \subset \{1, \ldots, p-1\}$ be any set. Let  $\mathcal{N}_{-S^{\prime}} = \{v \in \mathbb{R}^p:\, (Dv)_{-S^{\prime}} = 0\}$ and $r_{S^{\prime}} = \text{dim}(\mathcal{N}_{-S^{\prime}})$.  Let $\Psi^{-S^{\prime}} = (D_{-S^{\prime}})^{\top}\{D_{-S^{\prime}} (D_{-S^{\prime}})^{\top}\}^{-1}$.  The $j$th column of  $\Psi^{-S^{\prime}}$ is denoted as  $\psi^{-S^{\prime}}_j$.   For any vector $w_{-S^{\prime}}$ with $w_j \in [0, 1]$, $j \in \{1, \ldots, p-1\} \setminus S^{\prime}$ and any vector $v \in \mathbb{R}^p$, let $(1-w_{-S^{\prime}})(Dv)_{-S^{\prime}} = \{(1-w_j)(Dv)_j \}_{\{j  \in \{1, \ldots, p-1\}\backslash  S^{\prime}\}  }$.  

\begin{definition} [\cite{ortelli2019prediction}] \label{def2}
For any vector $q_{S^{\prime}} \in \{-1,1\}^{s^{\prime}}$, where $S^{\prime} \subset \{1, \ldots, p-1\}$ is any set and  $s^{\prime} = \left\vert S^{\prime} \right\vert$, the noiseless effective sparsity is defined as
	\[
	    \Gamma^2(q_{S^{\prime}} ) = \left(  \max\left\{   q_{S^{\prime}}^{\top}  (D\theta)_{S^{\prime}}   - \|(D\theta)_{-S^{\prime}}\|_1:\, \|\theta\| = p^{1/2}, \, \theta \in \mathbb{R}^p   \right\}\right)^2.
	\]
	The noisy effective sparsity is defined as
	\[
	\Gamma^2(q_{S^{\prime}},w_{-S^{\prime}} ) =  \left(  \max\left\{   q_{S^{\prime}}^{\top}  (D\theta)_{S^{\prime}}   - \|(1-w_{-S^{\prime}})(D\theta)_{-S^{\prime}}\|_1:~\|\theta\| = p^{1/2}, \, \theta \in \mathbb{R}^p   \right\}\right)^2,
	\]
	with
	\[
	w_j =  \frac{c_0 \left\|\psi^{-S^{\prime}}_{j} \right\|}{ \max_{l\in \{1, \ldots, p-1\} \backslash S^{\prime}} \left\|\psi^{-S^{\prime}}_{l} \right\|  }
	\]
	for  $j \in \{1, \ldots, p-1\} \backslash S^{\prime}$, and for a small enough constant $c_0$. 
\end{definition}

\section[]{PROOFS OF THE RESULTS IN SECTION \ref{sec-method}}
\subsection[]{Proof of \Cref{thm-RIC-bound}}

\begin{lemma}\label{lemma_s}
Suppose that $S$ satisfies the minimal spacing condition  $\Delta_{p, n} \geq c_0 p/(s+1)$ for some large enough constant $c_0 > 0$.
There exists a set $S^{\prime} = \left\{ u_1, \ldots, u_m \right\}$ with $0 = u_0 < u_1 < \ldots < u_m < u_{m+1} = p$, satisfying
\begin{enumerate}
    \item [(1)] $m \asymp s+1$,
    \item [(2)] $u_{\min} = \min_{t =  0, \ldots,  m} (u_{t+1}- u_{t}) \geq c_1 p/(s+1)$,
    \item [(3)] $ u_{\max} = \max_{t = 0, \ldots,  m} (u_{t+1}- u_{t}) \leq c_2 p/(s+1)$,
    \item [(4)] $S \subset S^{\prime}$,
\end{enumerate}
where $c_1, c_2 > 0$ are constants that only depend on $c_0$.
\end{lemma}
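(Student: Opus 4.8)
The plan is to view $S$ as defining a partition of $\{1,\dots,p\}$ into the $s+1$ consecutive blocks $(t_i,t_{i+1}]$, $i=0,\dots,s$, of integer lengths $\ell_i:=t_{i+1}-t_i$ satisfying $\sum_i\ell_i=p$ and $\ell_i\ge c_0 p/(s+1)$ (the last by the minimal spacing hypothesis), and then to refine this partition by cutting the overlong blocks into nearly equal integer-length pieces of size of order $p/(s+1)$, adding only $O(s+1)$ new break points. I will use throughout that $p/(s+1)\ge 1$ (since $s\le p-1$); the degenerate case $s=0$, where $x^*$ is constant, I would dispose of separately and trivially, e.g.\ by taking $S'=\{\lceil p/2\rceil\}$, so assume $s\ge 1$.

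For the construction, I would set $k_i=\max\{1,\lfloor (s+1)\ell_i/p\rfloor\}$ for each $i$. Because $s+1\le p$ implies $(s+1)\ell_i/p\le\ell_i$, one has $k_i\le\ell_i$, so the block $(t_i,t_{i+1}]$ can be split into $k_i$ consecutive integer-length sub-intervals that are as equal as possible, each of length $\lfloor\ell_i/k_i\rfloor$ or $\lceil\ell_i/k_i\rceil$. Take $S'=\{u_1<\dots<u_m\}$ to be the union of $S$ with all the interior endpoints so created; these lie strictly inside the blocks, hence are distinct from one another and from the $t_j$, giving $m=s+\sum_{i=0}^{s}(k_i-1)=\big(\sum_{i=0}^{s}k_i\big)-1$ and $S\subset S'$, which is part~(4). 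The blocks of the refined partition are precisely the sub-intervals just produced, so parts~(2) and~(3) come down to bounding $\lfloor\ell_i/k_i\rfloor$ below and $\lceil\ell_i/k_i\rceil$ above, uniformly in $i$.

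Then I would verify the four items as follows. Count: since $k_i\le\max\{1,(s+1)\ell_i/p\}\le 1+(s+1)\ell_i/p$ and $\sum_i\ell_i=p$, summation gives $\sum_{i=0}^{s}k_i\le 2(s+1)$, hence $s\le m\le 2(s+1)-1$ and $m\asymp s+1$, which is~(1). Upper spacing bound: if $k_i=1$ then $\lfloor(s+1)\ell_i/p\rfloor\le 1$, so $\ell_i/k_i=\ell_i<2p/(s+1)$; if $k_i\ge 2$ then $(s+1)\ell_i/p\ge 2$, so $k_i\ge(s+1)\ell_i/p-1\ge(s+1)\ell_i/(2p)$ and again $\ell_i/k_i\le 2p/(s+1)$; in either case $\lceil\ell_i/k_i\rceil\le\ell_i/k_i+1\le 3p/(s+1)$, so~(3) holds with $c_2=3$. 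Lower spacing bound: if $k_i=1$ the single piece has length $\ell_i\ge c_0p/(s+1)$; if $k_i\ge 2$ then $k_i\le(s+1)\ell_i/p$, so $\ell_i/k_i\ge p/(s+1)$ and $\lfloor\ell_i/k_i\rfloor\ge\lfloor p/(s+1)\rfloor\ge\tfrac12 p/(s+1)$; so~(2) holds with $c_1=\min\{c_0,1/2\}$. Both constants depend only on $c_0$.

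I do not anticipate a genuine obstacle here: this is a packing/covering construction and the only delicate point is the integer rounding. Using a floor rather than a ceiling in the definition of $k_i$ is what prevents over-subdividing a block into pieces that are too short relative to the target scale $p/(s+1)$, while the elementary inequality $(s+1)\ell_i/p\le\ell_i$ is precisely what guarantees $k_i\le\ell_i$ and hence that the requested integer-length subdivision exists; all remaining estimates are routine arithmetic with floors and ceilings.
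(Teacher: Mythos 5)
Your proof is correct, but it proceeds by a genuinely different construction from the paper's. The paper lays down a single global grid $G$ of spacing $\delta = cp/(s+1)$ over $[1,p]\cap\mathbb{N}$ and defines $S'$ as $S$ together with those grid points at distance at least $cp/(2(s+1))$ from every true change point; it then verifies the spacing bounds by a case analysis on whether consecutive elements $u_j,u_{j+1}$ of $S'$ come from $S$ or from the pruned grid. You instead work block by block: each interval $(t_i,t_{i+1}]$ is cut into $k_i=\max\{1,\lfloor (s+1)\ell_i/p\rfloor\}$ nearly equal integer pieces, and $S'$ collects $S$ plus the new interior cut points. The two constructions yield the same conclusion, and your arithmetic is sound: the inequality $(s+1)\ell_i/p\le\ell_i$ guarantees the subdivision exists, $\sum_i k_i\le 2(s+1)$ gives the cardinality bound, and the floor/ceiling estimates give $c_2=3$ and $c_1=\min\{c_0,1/2\}$, which depend only on $c_0$ as required. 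What your route buys is fully explicit constants and the elimination of the paper's slightly delicate pruning step (arguing that a surviving grid point exists between any two far-apart elements of $S'$, which is where the paper needs $c$ ``small enough'' relative to $c_0$); what the paper's route buys is a construction that does not depend on the block structure of $S$ beyond membership, which is closer in spirit to standard covering arguments. Your separate treatment of $s=0$ is appropriate since the ratio $m/(s+1)$ bound via $m\ge s$ degenerates there.
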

\begin{proof}
Let  $\delta =  c p/(s+1)$ for some small enough constant $c>0$,    and suppose that $G = \{v_1,\ldots,v_{m^{\prime}} \}$ is grid of evenly space points in $[1,p]\cap \mathbb{N}$ with spacing $\delta$. Let us set
\[
S^{\prime} =  S \cup \left\{v \in G:~\vert v - a\vert \geq \frac{cp}{2(s+1)},\,\,\forall  a \in S\right\}.
\]
Then the set $S^{\prime}$ satisfies the required. Indeed, notice that by definition of $G$ and $S^{\prime}$,  $ \vert S\vert \leq \vert S^{\prime} \vert \leq  \vert S\vert + \vert G \vert  \lesssim s+1$.  Also, $S \subset S^{\prime}$.

Next, let us verify $(2)$. To see this notice that if $u,v \in S$ and $u\neq v$, then 
\[
    |u-v| \geq \frac{c_0 p}{s+1}.
\]
If $u,v \in S^{\prime}  \backslash S$ and $u\neq v$, then  by construction of $G$,
\[
 |u-v| \geq \frac{c p}{s+1}.
\]
If  $u \in S$ and $v \in S^{\prime}  \backslash S$, then by definition of $S^{\prime}$ it holds that 
\[
 |u-v| \geq \frac{c p}{2(s+1)}.
\]
Therefore,
\[
   u_{\min} \geq \min\{c_0,c/2\}\frac{p}{s+1}.
\]

Finally, let us verify $(3)$. To see this, we proceed in cases.
\begin{enumerate}
    \item [(a)] Notice that  $u_j,u_{j+1} \in S^{\prime}$ cannot happen if $c \leq c_0$ with $c$ small enough. Otherwise any $a \in G \cap [u_j, u_{j+1}]$ would need to satisfy $ \min\{\vert a - u_{j} \vert,\vert a - u_{j+1} \vert\} \leq cp/(2(s+1))$. But this is not possible for small enough $c$.
   \item [(b)] \label{the_second} Suppose  that $u_{j}, u_{j+1} \in G$. Then by construction 
\[
\vert u_{j+1} - u_{j} \vert  = \frac{ cp }{s+1}.
\]
    \item [(c)] Suppose that $u_j \in S$ and $u_{j+1} \in S^{\prime} \backslash S$.   If  $\vert u_{j+1} - u_{j} \vert > 3 cp/(2(s+1))$, then there must exists $a \in G$ such that $u_{j} < a< u_{j+1}$ and
$a - u_{j} > cp/(2(s+1))$. Therefore, it must be the case that $a \in S^{\prime}$. But that it is not possible because it would mean that $u_{j} < a< u_{j+1}$ and $a \in S^{\prime}$. Hence, 
\[
\vert u_{j+1} - u_{j}\vert \leq  3 cp/(2(s+1)).
\]

   \item [(d)] Suppose that $u_{j+1} \in S$ and $u_{j} \in S^{\prime} \backslash S$. With the same argument as in (\ref{the_second}), we can conclude that 
\[
\vert u_j - u_{j+1}\vert \leq  3 cp/(2(s+1)).
\]
\end{enumerate}
The proof concludes.

\end{proof}
\begin{proof}[Proof of \Cref{thm-RIC-bound}]

Let $S^{\prime}$ the set constructed in \Cref{lemma_s}. Notice that  $\|D_{-S^{\prime}}x^*\|_1 =0 $ and $\left\vert S^{\prime} \right\vert \asymp s+1$. 

It directly follows from the definition of $\hat{x}$ that
\begin{align}\label{eqn1}
    \|A\hat{x} - Ax^*\|^2 \leq 2\epsilon^{\top}A(\hat{x}-x^*) \leq |2\epsilon^{\top}A  P_{\mathcal{N}_{-S^{\prime}}}(\hat{x}-x^*)| +  |2\epsilon^{\top}A  P_{\mathcal{N}_{-S^{\prime}}^{\perp}}(\hat{x}-x^*)| = |(I)| + |(II)|.
\end{align}
The rest of the proof will be conducted on the two terms $(I)$ and $(II)$ in the right-hand side of \eqref{eqn1}.

\medskip
\noindent \textbf{Term $(I)$.}  Let $\left\{v_1, \ldots, v_{r_{S^{\prime}}} \right\} \subset \R^p$ be an orthonormal basis of $\mathcal{N}_{-S^{\prime}}$, such that for the partition $I_1, \ldots,   I_{r_{S^{\prime}}}$ of $\{1, \ldots, p\}$ induced by the change point set $S$, it holds that 
\[
    v_{l, i} = \begin{cases}
        |I_l|^{-1/2} & i \in I_l,\\
        0 & \text{otherwise}.
    \end{cases}  
\] 
Based on this basis, we have that 
\begin{align}
    (I)^2 \leq & 4 \|P_{\mathcal{N}_{-S^{\prime}}} A^{\top} \epsilon\|^2  \|\hat{x} - x^*\|^2 = 4 \left\|\sum_{j=1}^{r_{S^{\prime}}} v_j^{\top} A^{\top} \epsilon v_j \right\|^2 \|\hat{x}-x^*\|^2 \nonumber \\
    \leq & 4 \|\hat{x} - x^*\|^2  r_{S^{\prime}} \left(\max_{j = 1, \ldots , r_{S^{\prime}}} \frac{|\epsilon^{\top} A v_j|^2}{\| Av_j  \|^2}\right) \left(\max_{j = 1, \ldots, r_{S^{\prime}}} \|Av_j \|^2 \right). \label{eq-pf-thm1-term-1}
\end{align}
Define the event
\[
    \Omega_1 = \left\{ \max_{j = 1, \ldots, r_{S^{\prime}}} \frac{|\epsilon^{\top} A v_j|^2}{\| Av_j \|^2 }  \leq  2\sigma^2 \left\{\log(2 r_{S^{\prime}})  +  \log \left( p \vee n \right) \right\} \right\}.
\]
Due to a concentration inequality on the maximum of $r_{S^{\prime}}$ standard Gaussian random variables \citep[e.g.~Lemma~17.5 in][]{van2016estimation}, we have that $\mathbb{P} \{\Omega_1 \} \geq  1- \left( p \vee n \right)^{-1} $.  On the event $\Omega_1$, due to \eqref{eq-pf-thm1-term-1}, we have that  
\begin{align}\label{eqn3}
    |(I)| & \leq 2\sqrt{2} \sigma  \sqrt{r_{S^{\prime}}     \left\{ \log(2 r_{S^{\prime}})  +  \log \left( p \vee n \right)  \right\}}  \|\hat{x}-x^*\| \max_{ j=1,\ldots,r_{S^{\prime}} } \| Av_j  \| \nonumber \\
    & \leq 2\sqrt{2} \sigma \left(1     +\eta _2+   \sqrt{\rho_2( 1 )}\right) \sqrt{r_{S^{\prime}}     \left\{ \log(2 r_{S^{\prime}})  +  \log \left( p \vee n \right)  \right\}} \|\hat{x}-x^*\|,
\end{align}
where the second inequality is due to the RIC condition in \Cref{def1} and $\|Dv_j \|_1 \leq 1$ for any $ j=1,\ldots,r_{S^{\prime}}$.

\medskip
\noindent \textbf{Term $(II)$.} Due to the choice of $V$, for any $\lambda > 0$, it holds that
\begin{align} \label{eqn6}
     (II)  & \leq  2\epsilon^{\top}A  P_{\mathcal{N}_{-S^{\prime}}^{\perp}}(\hat{x}-x^*) + \lambda (\|Dx^*\|_1 -  \|D\hat{x}\|_1) + \lambda (V -  V^*) \nonumber \\
    & =  2\epsilon^{\top}A \Psi^{-S^{\prime}} \{D(\hat{x}-x^*)\}_{-S^{\prime}} + \lambda (\|Dx^*\|_1 -  \|D\hat{x}\|_1) + \lambda(V -  V^*) \nonumber \\
    & \leq  \frac{2}{c_0}\left(\max_{j\in \{1, \ldots, p-1\} \backslash S^{\prime}} \frac{| \epsilon^{\top}A \psi_j^{-S^{\prime}}|}{\| A\psi_j^{-S^{\prime}}\|}\right) \left(\max_{j\in \{1, \ldots, p-1\} \backslash S^{\prime}} \frac{\| A\psi_j^{-S^{\prime}}\| }{\| \psi_j^{-S^{\prime}}\| }\right) \left(\max_{j\in  \{1, \ldots, p-1\}\backslash S^{\prime} } \| \psi_j^{-S^{\prime}} \|\right) \nonumber \\
    & \hspace{0.5cm} \|w_{-S^{\prime}} \{D(\hat{x}-x^*)\}_{-S^{\prime}} \|_1 + \lambda (\|Dx^*\|_1 -  \|D\hat{x}\|_1) + \lambda(V -  V^*),
\end{align}
where $c_0 > 0$ is an absolute constant.

Define the event
\[
    \Omega_2 = \left\{\max_{j\in \{1, \ldots, p-1\} \backslash S^{\prime}}\frac{ \left\vert  \epsilon^{\top}A \psi_j^{-S^{\prime}} \right\vert}{ \left\| A\psi_j^{-S^{\prime}} \right\|} \leq 2\sigma \sqrt{ \log \left( p \vee n \right)}  \right\}.
\]
It again follows from a concentration inequality on the maximum of $r_{S^{\prime}}$ standard Gaussian random variables \citep[e.g.~Lemma~17.5 in][]{van2016estimation} that $\mathbb{P}\{\Omega_2\} \geq 1 - ( p \vee n)^{-1}$.  

Recall that $S^{\prime} = \{u_1, \ldots, u_m\}$, $u_0 = 0$, and $u_{m+1} = p$ .  For any $i \in \{1, \ldots, m+1\}$, let $p_i = u_i - u_{i-1}$.  For any $i \in \{0, \ldots, m\}$ and $j \in \{1, \ldots, p_{i-1} - 1\}$, by the derivation on Page 12 of \cite{ortelli2019prediction}, it holds that $\|D \psi_{ u_{i-1}+j }^{-S^{\prime}}\|_1 = \sum_{l=1}^{p_{i}} |a_{l+1} - a_l|$, where for $l = 1, \ldots, p_i$, 
\[
  a_l = \frac{l - 1}{p_i} \mathbf{1}\{ l \leq u_{i-1} + j \} - \frac{p_i -l + 1}{p_i} \mathbf{1}\{ l > u_{i-1}+j \}.
\]
It then holds that $\|D \psi_{ u_{i-1}+j }^{-S^{\prime}}\|_1 \leq 2$.  It again follows from the RIC that
\begin{equation}\label{eqn7}
    \max_{j\in \{1, \ldots, p-1\} \backslash S^{\prime}} \frac{\| A\psi_j^{-S^{\prime}}\| }{\| \psi_j^{-S^{\prime}}\| } \leq 1 + \eta_2 + \sqrt{\rho_2(4)},
\end{equation}
since $\|\psi_j^{-S^{\prime}}\| \geq 1/2$ \citep[see Page 12 in][]{ortelli2019prediction}.  

Moreover, it follows from the arguments on Page 12 in \cite{ortelli2019prediction} that, there exists an absolute constant $C_1>0$  such that   
\begin{equation} \label{eqn8}
	\max_{j\in  \{1, \ldots, p-1\} \backslash S^{\prime} } \left\| \psi_j^{-S^{\prime}} \right\| \leq C_1 \sqrt{\frac{p}{s+1}}.
\end{equation}
Combining \eqref{eqn6}, \eqref{eqn7} and \eqref{eqn8}, we have that, on the event $\Omega_1 \cap \Omega_2$, there exists an absolute constant $C_2>0$ such that
\begin{align}
    (II) & \leq C_2 \sigma \sqrt{\log(p \vee n)} (1 + \eta_2 + \sqrt{\rho_2(4)}) \sqrt{\frac{p}{s+1}} \| w_{-S^{\prime}} \{D(\hat{x}-x^*)\}_{-S^{\prime}} \|_1 \nonumber \\
    & \hspace{0.5cm} + \lambda (\|Dx^*\|_1 -  \|D\hat{x}\|_1) + \lambda (V -  V^*). \label{eq-termii-lambda}
\end{align}
Choosing $\lambda = C_2\sigma \sqrt{\log( p \vee n)} (1 + \eta_2 + \sqrt{\rho_2(4)})   \sqrt{p/(s+1)}$, due to \eqref{eq-termii-lambda}, we have that 
\begin{align}\label{eqn10}
    (II) & \leq \lambda (\| w_{-S^{\prime}} \{D(\hat{x}-x^*)\}_{-S^{\prime}} \|_1 + \|Dx^*\|_1 -  \|D\hat{x}\|_1 + V -  V^*) \nonumber \\
    & \leq 2\lambda \Gamma(q_{S^{\prime}},w_{-S^{\prime}}) p^{-1/2} \|\hat{x} -x^*\| + \lambda (V -  V^*)\nonumber \\
    & \leq C_3 \lambda \sqrt{\log\{p/(s+1)\}}(s+1) p^{-1/2} \|\hat{x} - x^*\| + \lambda (V -  V^*) \nonumber \\
    & \leq C_4 \sigma \sqrt{(s+1) \log\{p/(s+1)\} \log (p \vee n)} (1 + \eta + \sqrt{\rho_2(4)}) \|\hat{x} - x^*\| \nonumber \\
    & \hspace{0.5cm} + C_4 \sigma \sqrt{p/(s+1) \log(p \vee n)} (1 + \eta_2 + \sqrt{\rho_2(4)}) (V - V^*),
\end{align}
where $C_3, C_4 > 0$ are absolute constants, $q_{S^{\prime}} = \mathrm{sign}\{(D x^*)_{S^{\prime}}\}$, the second inequality follows from the proof of Theorem 2.2 (Page 25) in \cite{ortelli2019prediction}, the third inequality follows from Section 3.3 in \cite{ortelli2019prediction}, 
 and the last is due to the choice of $\lambda$.

We now have two cases.

\medskip
\noindent \textbf{Case 1.} If $\|\hat{x}-x^*\|\leq 1$, then the final claim follows.

\medskip
\noindent \textbf{Case 2.} If $\|\hat{x}-x^*\|> 1$, then we have that
    \[
        \left \|D\left(\frac{\hat{x}-x^*}{\|\hat{x}-x^*\|}\right)\right\|_1 \leq   \frac{ \|D \hat{x}\|_1  +   \| Dx^*\|_1}{\|\hat{x}-x^*\|} \leq 2V,
    \]
    which, combining with the RIC as in \Cref{def1}, leads to that 
    \begin{equation}\label{eq-a-x-lower-bound}
        \|Ax^* - A\hat{x}\|^2 \geq (1 - \eta_1 -\sqrt{\rho_1(2V)})^2 \|\hat{x} -x^*\|^2.    
    \end{equation}
      
Letting $\gamma = \{1 - \eta_1 -  \sqrt{\rho_1(2V)}\}^{-2}$, combining \eqref{eqn1}, \eqref{eqn3}, \eqref{eqn10} and \eqref{eq-a-x-lower-bound}, we thus have that  
\begin{align} \label{eqn12}
    \|\hat{x} -x^*\|^2 & \leq 2\sqrt{2} \sigma \gamma (1 + \eta_2 + \sqrt{\rho_2(1)})\sqrt{r_{S^{\prime}} \{\log(2 r_{S^{\prime}}) + \log (p \vee n)\}}\|\hat{x} - x^*\| \nonumber \\
    & \hspace{0.5cm} + 2C_4 \sigma \gamma \sqrt{(s + 1) \log\{p/(s+1)\} \log(p \vee n)} (1 + \eta + \sqrt{\rho_2(4)})\|\hat{x} - x^*\| \nonumber \\
    & \hspace{0.5cm} + 2C_4 \sigma \gamma \sqrt{p/(s+1) \log(p \vee n)} (1 + \eta_2 + \sqrt{\rho_2(4)}) (V - V^*).
\end{align}

\smallskip
\noindent \textbf{Case 2.1.} If 
    \[
        \|\hat{x} - x^*\| \leq 2C_4\sigma \gamma \sqrt{p/(s+1) \log(p \vee n)} (1 + \eta_2 + \sqrt{\rho_2(4)}) (V - V^*),
    \]
    then we have that
    \[
        \|\hat{x} -x^*\| ^2 \leq  4C_4^2 \sigma^2 \gamma^2  \log( p \vee n) (1 + \eta_2 + \sqrt{\rho_2(4)})^2 p/(s+1)(V -  V^*)^2
    \]
    and conclude the proof.

\smallskip
\noindent \textbf{Case 2.2.} If 
    \[
        \|\hat{x} - x^*\| > 2C_4\sigma \gamma \sqrt{p/(s+1) \log(p \vee n)} (1 + \eta_2 + \sqrt{\rho_2(4)}) (V - V^*),
    \]
    then \eqref{eqn12} implies that
    \begin{align*}
        \|\hat{x} - x^*\|^2  & \leq C_5\sigma^2 \gamma^2 s \log\{p/(s+1)\} \log(p \vee n) + 2
    \end{align*}
    where $C_5 > 0$ is a constant, and conclude the proof.
\end{proof}

\subsection[]{Proof of \Cref{prop1}} \label{sec:prop2}
\begin{lemma} \label{lem3}
Let  $A \in \R^{p \times p}$,
\[
\mathcal{S}=\{x \in \mathbb{R}^m:~\|x\|= 1, \|Dx\| \leq  t \}
\]
and $\mathcal{N}$ be an $\epsilon$-net of  $\mathcal{S}$ for some  $\epsilon>0$. Then
\[
 \sup_{y\in \mathcal{N}} \left\vert y^{\top} A y\right\vert	  \geq    \sup_{x\in \mathcal{S}} \left\vert x^{\top} A x \right\vert  - 2\|A\|_{\mathrm{op}} \epsilon.
\]
\end{lemma}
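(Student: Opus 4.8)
This is a routine covering/net argument for a quadratic form, so I would proceed directly. Fix an arbitrary $x \in \mathcal{S}$ and, by definition of an $\epsilon$-net, pick $y \in \mathcal{N}$ with $\|x - y\| \leq \epsilon$. Write the telescoping identity
\[
x^{\top} A x - y^{\top} A y = x^{\top} A (x - y) + (x - y)^{\top} A y,
\]
and bound each term by the operator norm:
\[
\bigl| x^{\top} A x - y^{\top} A y \bigr| \leq \|A\|_{\mathrm{op}} \|x\| \|x - y\| + \|A\|_{\mathrm{op}} \|x - y\| \|y\|.
\]
Since $\mathcal{N} \subseteq \mathcal{S}$ (the usual convention for a net of $\mathcal{S}$), both $\|x\| = 1$ and $\|y\| = 1$, hence the right-hand side is at most $2\|A\|_{\mathrm{op}}\epsilon$.

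\textbf{Conclusion.} From the triangle inequality,
\[
|x^{\top} A x| \leq |y^{\top} A y| + 2\|A\|_{\mathrm{op}}\epsilon \leq \sup_{y' \in \mathcal{N}} |y'^{\top} A y'| + 2\|A\|_{\mathrm{op}}\epsilon.
\]
Taking the supremum over $x \in \mathcal{S}$ on the left yields the claimed inequality. Note that the total variation constraint $\|Dx\| \leq t$ (and the dimension mismatch between $\mathbb{R}^m$ and $A \in \mathbb{R}^{p\times p}$, which I read as $m = p$) plays no active role here beyond defining $\mathcal{S}$; it only matters later, when this lemma is combined with a metric-entropy bound on $\mathcal{X}(t)$ to control $\sup_{x \in \mathcal{S}} |x^\top(A^\top A - I)x|$ in the proof of \Cref{prop1}.

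\textbf{Main obstacle.} There is essentially none: the only point requiring a word of care is whether the net is taken to lie inside $\mathcal{S}$ or merely to be $\epsilon$-close to it. If one only assumes $\mathcal{N}$ covers $\mathcal{S}$ without $\mathcal{N} \subseteq \mathcal{S}$, then $\|y\| \leq 1 + \epsilon$ and one gets the slightly weaker constant $(2 + \epsilon)\|A\|_{\mathrm{op}}\epsilon$; I would simply adopt the convention $\mathcal{N}\subseteq\mathcal{S}$ so that the stated bound holds verbatim.
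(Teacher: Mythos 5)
Your proof is correct and follows essentially the same route as the paper's: the same telescoping decomposition $x^{\top}Ax - y^{\top}Ay = x^{\top}A(x-y) + (x-y)^{\top}Ay$ bounded by the operator norm, with the net points assumed to lie in $\mathcal{S}$ so that $\|y\|=1$. Your remarks on taking an arbitrary $x$ rather than an attained maximiser and on the net convention are minor refinements of the same argument.
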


\begin{proof}
Let $x _0\in  \mathcal{S}$ such that
\[
	 x_0 \in \argmax_{x \in \mathcal{S}} \left\vert  x^{T}A x \right\vert.
\]
Also, let  $y \in \mathcal{N}$ such that $\|x_0-y\|\leq \epsilon$. Then,
\begin{align}
  \left\vert  x_0^{T}A x_0 - y^{T}A y\right\vert & =    \left\vert       x_0^{\top} A(x_0-y)  +(x_0-y)^{\top} A y \right\vert \nonumber \\
	     & \leq  \|x_0\|  \|A\|_{\mathrm{op} } \|x_0 -y\|   + \|x_0 -y\|\|A\|_{\mathrm{op}} \|y\| \nonumber \\
	      & \leq   \|A\|_{\mathrm{op}} \cdot \epsilon + \epsilon \cdot \|A\|_{\mathrm{op}} \nonumber \\
	       & =  2\epsilon  \|A\|_{\mathrm{op} }. \nonumber
\end{align}
Hence, 
\[
 \sup_{y\in \mathcal{N}} \vert y^{\top} A y \vert	  \geq    \sup_{x\in \mathcal{S}} \left\vert x^{\top} A x \right\vert  - 2\epsilon \|A\|_{\mathrm{op}} ,   
\]
and the lemma follows.
\end{proof}

\begin{proof}[Proof of \Cref{prop1}]

First notice that by Equation. (5.25) in \cite{vershynin2018high},   there exist absolute constants $C_1, c_3>0$ such that  the event
\[
\Omega := \left\{ \left\| A^{\top} A -\Sigma \right\|_{\mathrm{op}} \leq C_1 \sqrt{\frac{p}{n}}+ \sqrt{\frac{\log n}{n}}    \right\}
\]
happens with probability at least $ 1-  2n^{-c_3}$. From now on we assume that the event $\Omega$ holds. Next let
\[
r  := \frac{(1-\zeta)^2}{8\left(C_1 \sqrt{\frac{p}{n}}+ \sqrt{\frac{\log n}{n}}\right) },
\]
and $\mathcal{N}$ be an $r$-net of  $\mathcal{S}$, where $\mathcal{S}$ was defined in Lemma \ref{lem3}. Notice that if
\[
\sup_{x\in \mathcal{S} } \left\vert x^{\top} \left( \Sigma - A^{\top} A \right)x \right\vert \leq \frac{(1-\zeta)^2}{2}
\]
then
\[
1-\zeta - \frac{1 - \zeta}{\sqrt{2}} \leq \left\|Ax \right\|\leq 1+\zeta + \frac{1-\zeta}{\sqrt{2}}
\]
for all $x$. Suppose now that 
\[
 \sup_{x\in \mathcal{S}} \left\vert x^{\top} \left( \Sigma -  A^{\top} A \right)x \right\vert >   \frac{(1-\zeta)^2}{2}. 
\]
Then from Lemma \ref{lem3} it follows that 
\begin{equation}
    \label{eqn:contradiction}
       2\sup_{x\in \mathcal{N}} \left\vert x^{\top} \left( \Sigma - A^{\top} A \right) x \right\vert	 \geq   2\sup_{x\in \mathcal{S}} \left\vert x^{\top} \left( \Sigma - A^{\top} A \right) x \right\vert - 4r \left\| A^{\top} A -\Sigma \right\|_{\mathrm{op}}  > \frac{(1-\zeta)^2}{2}.
\end{equation}
However, as we will see next  the latter happens with small probability.  Towards that end,  let  $y \in \R^p$ with $\|y\|=1$. Let  $z =    \Sigma^{1/2} y$ and $\tilde{A}   =     A \Sigma^{-1/2}$. Then $\| z\|^2 = y^{\top} \Sigma y  \leq  (1 + \zeta)^2$ and so  for any $\varepsilon > 0$ we have that 
\begin{align}
   \mathbb{P}\left(  \left\vert y^{\top} \left( A^{\top}A  - \Sigma \right) y      \right\vert  \geq  \frac{(\zeta+1)^2 \varepsilon}{2}    \right)
	 & \leq    	\mathbb{P}\left(  \left\vert y^{\top} \left(  A^{\top}A  -   \Sigma \right) y      \right\vert  \geq  \frac{ \| z\|^2\varepsilon}{2}    \right)  \nonumber\\
	 & =        	\mathbb{P}\left(  \left\vert  \frac{z^{\top}}{\|z\|} \tilde{A}^{\top}\tilde{A} \frac{z}{\|z\|} -  1     \right\vert  \geq  \frac{ \varepsilon}{2}    \right)  \nonumber\\
	  & \leq  2\exp\left(   -c_4U^{-4} \min\{  \varepsilon,\varepsilon^2  \}n\right),  \nonumber
\end{align}
where $c_4 > 0$ is a sufficiently large absolute constant, and the last inequality follows by the argument on Page 24 of  \cite{vershynin2018high}. Therefore, by union bound,  we have that 
\[
 	\mathbb{P}\left(     \sup_{y\in \mathcal{N} }  \left\vert y^{\top}  \left( A^{\top}A  -  \Sigma \right) y      \right\vert  \geq  \frac{(\zeta+1)^2 \varepsilon}{4}    \right)\leq  2\exp\left(   -c_4U^{-4} \min\{  \varepsilon,\varepsilon^2  \}n  +   \log \vert \mathcal{N}\vert\right).
\]
Furthermore,  by the argument in the proof of Lemma  B.1 in \cite{guntuboyina2020adaptive}, for an absolute constant $C_2 > 0$, we have that 
\[
\log \vert \mathcal{N}\vert  \leq  C_2  \left[t \sqrt{p}  +   \log p\right] \left( \sqrt{\frac{p}{n}}+ \sqrt{\frac{\log n}{n}}\right) \frac{1}{ \left( 1-\zeta \right)^2}.  
\]
Therefore choosing 
\[
\varepsilon = \frac{(1 -\zeta)^2}{4c_2(1 + \zeta)^2} \sqrt{  \left(\frac{t \sqrt{p}  +   \log p}{n}\right)\left( \sqrt{\frac{p}{n}}+ \sqrt{\frac{\log n}{n}}\right) },
\]
by \Cref{eq5}, we obtain that 
\[
 \sup_{y\in \mathcal{N} } \left\vert y^{\top} \left( A^{\top}A  - \Sigma \right) y      \right\vert  \leq \frac{(1 -\zeta)^2}{4},
\]
with probability at least $1-2\exp\left(-c_5 U^{-4} n \epsilon^2\right)$, where $c_5 > 0$ is an absolute constant. Thus, we have arrived at a contradiction to (\ref{eqn:contradiction}) and the claim follows. By \Cref{eq5}, the proposition follows with probability at least $1 - 2n^{-c_3}-2\exp\left(-c_6U^{-4}\phi_n \right)$, where $c_3, c_6 > 0$ are absolute constants.

\end{proof}

\subsection[]{Proof of \Cref{thm_penalise_RIC_bound}}

\begin{lemma}\label{lemma1}
Let $\{ n^{-1/2} a_i\}_{i = 1}^n$ be the rows of matrix $A \in \R^{n \times p}$.  Assume $\{a_i\}_{i = 1}^n$ are independent and identically distributed from $\mathcal{N}(0, \Sigma)$, satisfying that 
    \[
        0 < l \leq \sigma_{\min}(\Sigma)  \quad \mbox{and} \quad \|\Sigma\|_{\infty} \leq \rho,
    \]
    where $l$ and $\rho$ are absolute constants.  Suppose that $\epsilon \sim \mathcal{N}(0, \sigma^2 I_n)$.
	If $\lambda_1 = C_{\lambda_1} \sigma\sqrt{\rho\log (p \vee n)}$, then we have  
\[
\P \left( 2\|A^\top \epsilon \|_\infty \leq  \frac{\lambda_1}{2} \right) \geq 1 - c_1 (p \vee n)^{-1}-p\exp(-n).
\]
where $c_1 > 0$ is an absolute constant.
\end{lemma}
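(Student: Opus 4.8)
The plan is to condition on the (random) design matrix $A$, exploiting that the noise $\epsilon$ is independent of $A$, and then treat $A^{\top}\epsilon$ as a Gaussian vector whose conditional covariance is controlled by $\rho$. Writing $(A^{\top}\epsilon)_j = n^{-1/2}\sum_{i=1}^{n}(a_i)_j\epsilon_i$ for $j\in\{1,\ldots,p\}$, conditionally on $A$ the coordinate $(A^{\top}\epsilon)_j$ is a centred Gaussian with variance $\sigma^2\|A_{\cdot j}\|^2$, where $A_{\cdot j}$ denotes the $j$th column of $A$. The proof then rests on two separate estimates: a uniform upper bound on the column norms $\|A_{\cdot j}\|^2$, and a Gaussian maximal inequality applied after conditioning.

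First I would bound the column norms. Since $\|A_{\cdot j}\|^2 = n^{-1}\sum_{i=1}^{n}(a_i)_j^2$ and $(a_i)_j\sim\mathcal{N}(0,\Sigma_{jj})$ with $\Sigma_{jj}\le\|\Sigma\|_{\infty}\le\rho$, the variable $\Sigma_{jj}^{-1}\sum_{i=1}^{n}(a_i)_j^2$ follows a $\chi^2_n$ distribution. A standard $\chi^2$ tail bound gives $\mathbb{P}(\|A_{\cdot j}\|^2 > C_0\rho)\le\mathbb{P}(\chi^2_n > C_0 n)\le e^{-n}$ for a suitable absolute constant $C_0$ (for instance $C_0 = 5$), and a union bound over $j=1,\ldots,p$ shows that the event $\mathcal{E} = \{\max_{1\le j\le p}\|A_{\cdot j}\|^2 \le C_0\rho\}$ has probability at least $1 - p e^{-n}$.

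Next, working on $\mathcal{E}$ and conditionally on $A$, each $(A^{\top}\epsilon)_j$ is $\mathcal{N}(0,v_j^2)$ with $v_j^2 = \sigma^2\|A_{\cdot j}\|^2\le C_0\sigma^2\rho$. Since $\{2\|A^{\top}\epsilon\|_{\infty}\le\lambda_1/2\} = \{\max_j|(A^{\top}\epsilon)_j|\le\lambda_1/4\}$, a Gaussian tail bound for each coordinate together with a union bound over the $p$ coordinates (in the spirit of Lemma~17.5 of \cite{van2016estimation}) gives, on $\mathcal{E}$,
\[
\mathbb{P}\bigl(2\|A^{\top}\epsilon\|_{\infty} > \lambda_1/2 \,\big|\, A\bigr) \le 2p\exp\!\left(-\frac{(\lambda_1/4)^2}{2C_0\sigma^2\rho}\right) = 2p\,(p\vee n)^{-C_{\lambda_1}^2/(32C_0)},
\]
using $\lambda_1 = C_{\lambda_1}\sigma\sqrt{\rho\log(p\vee n)}$. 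Taking $C_{\lambda_1}$ large enough that $C_{\lambda_1}^2\ge 64C_0$ and using $p\le p\vee n$, the right-hand side is at most $2(p\vee n)^{-1}$.

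Finally I would integrate over $A$: by the law of total probability,
\[
\mathbb{P}\bigl(2\|A^{\top}\epsilon\|_{\infty}\le\lambda_1/2\bigr) \ge \mathbb{P}(\mathcal{E}) - \sup_{A\in\mathcal{E}}\mathbb{P}\bigl(2\|A^{\top}\epsilon\|_{\infty} > \lambda_1/2 \,\big|\, A\bigr) \ge 1 - p e^{-n} - 2(p\vee n)^{-1},
\]
which is the asserted bound with $c_1 = 2$. I do not expect a substantive obstacle: this is a routine conditioning-plus-union-bound argument, and the only care needed is (a) keeping the two probabilistic events — the column-norm event $\mathcal{E}$ and the conditional Gaussian-maximum event — cleanly separated when combining them, and (b) calibrating the absolute constants so that the $\chi^2$ deviation delivers exactly the $e^{-n}$ rate and $C_{\lambda_1}$ is matched to the $\sqrt{\log(p\vee n)}$ scaling of $\lambda_1$.
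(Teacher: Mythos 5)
Your proposal is correct and follows essentially the same route as the paper: both arguments control the column norms $\|A_{\cdot j}\|^2$ via a $\chi^2$ tail bound (giving the $p e^{-n}$ term), then condition on the design and apply a Gaussian tail plus a union bound over the $p$ coordinates, with the constant $C_{\lambda_1}$ calibrated against $\sqrt{\log(p\vee n)}$ to absorb the union bound. The only cosmetic difference is that you introduce a single global column-norm event before taking the union bound, whereas the paper splits the probability column by column first; the estimates are identical.
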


\begin{proof}
Let $A_{,j}$ be the $j^\th$ column of $A$ for $j=1,\dots,p.$ For any constant $C$, we have
\[
	\P \left( \left\vert A_{,j}^\top\epsilon \right\vert \geq C \right) = \P( \left\vert A_{,j}^\top \epsilon\right\vert  \geq C, \left\| A_{,j} \right\| \leq  \sqrt{5\Sigma_{j, j}})+\P \left(  |A_{,j}^\top \epsilon| \, \geq C , \left\| A_{,j} \right\|  >  \sqrt{5\Sigma_{j, j}} \right).
\]
Noticing that each element in $A_{,j}$ is a realization, we have 
\[
 \P \left(  \left\vert A_{,j}^\top \epsilon \right\vert \geq  C, \left\| A_{,j} \right\| \leq  \sqrt{5\Sigma_{j,j}}\right)
 \leq 2 \exp \left( \frac{-C^2}{10\sigma^2\Sigma_{j, j}} \right),
\]
where we get the last step by sub-Gaussian tails, and $A$ and the errors $\epsilon$ are independent.
\begin{align}
    \P \left( \left\vert A_{,j}^\top \epsilon \right\vert \geq C,  \left\|A_{,j} \right\| \right)  >   \sqrt{ 5\Sigma_{j,j} }) 
    & \leq    \P \left( \left\| A_{,j} \right\|  > \sqrt{5\Sigma_{j, j}} \right) \nonumber\\
 	& =  \P \left(\frac{ \left\| A_{,j} \right\|^2}{\Sigma_{j, j}} > 5 \right) \nonumber\\
 	& \leq  \exp(-n), \nonumber
\end{align}
 where we get the last step by Lemma 1 in \cite{laurent2000adaptive}. Then we have 
\[
	\P \left( \left\vert A_{,j}^\top \epsilon \right\vert  \geq C \right)  \leq  2 \exp \left( \frac{-C^2}{10\sigma^2\Sigma_{j, j}} \right) +  \exp \left( -n \right).
\]
Using the union bound, we have  	
\[
	\P \left( 2\left\| A^\top\epsilon \right\|_{\infty}  \geq  C \right) \leq 2 \exp \left(\frac{-C^2}{40\sigma^2\rho}+\log p \right) + \exp\left(-n+\log p\right)
\]
Let $C = \lambda_1/2$, we have 
\[
 \P( 2 \| A^\top \epsilon \|_{\infty} \, \leq \, \frac{\lambda_1}{2})\geq 1 - c_1  (p \vee n)^{-1} - p\exp(-n),
\]
where $c_1 > 0$ is an absolute constant, completing the proof.	
\end{proof}

\begin{proof}[Proof of \Cref{thm_penalise_RIC_bound}]
First, by the basic inequality
\[
	 \left\| y-A\tilde x \right\|^2 + \lambda_1 \left\| \tilde  x \right\|_1 + \lambda_2 \left\| D\tilde  x \right\|_1  \leq  
	 \left\| y-A x^* \right\|^2 + \lambda_1 \left\| x^* \right\|_1 + \lambda_2 \left\| Dx^* \right\|_1.
\]
By rearranging, we have 
\[
	  \left\| A(\tilde x-x^*) \right\|^2  \leq  
	 2 \epsilon^\top A(\tilde x-x^*) + \lambda_1 \left(\left\| x^* \right\|_1 - \left\| \tilde x \right\|_1 \right)+\lambda_2 \left( \left\|D x^* \right\|_1 - \left\| D \tilde x \right\|_1 \right).
\]
Now $\left\vert \epsilon ^ \top A \left( \tilde x- x^* \right) \right\vert  \leq  \left\| A^\top \epsilon  \right\|_\infty  \left\| \tilde x - x^* \right\|_1$. Let $\Omega_1 : = \left\{ 2 \left\|A^ \top \epsilon \right\|_\infty \leq \frac{\lambda_1}{2} \right\}$. Lemma \ref{lemma1} shows that $\P \left( \Omega_1 \right)  \geq   1 - c_1 \left(p \vee n \right)^{-1} - p\exp(-n),$ where $c_1 > 0$ is an absolute constant. Working on the event $\Omega_1,$ we obtain 
\begin{equation}\label{equation1}
   \left\| A \left(\tilde x - x^* \right) \right\|^2   \leq 
	 \lambda_1/2  \left\| \tilde x - x^* \right\|_1
	 +\lambda_1\left( \left\| x^* \right\|_1 - \left\|\tilde x \right\|_1 \right) + \lambda_2 \left( \left\| Dx^* \right\|_1 - \left\|D \tilde x \right\|_1 \right).  
\end{equation}
Then for the first two terms in the right hand side of the above inequality, we have
\begin{align}\label{equation2} 
	  \lambda_1/2  \left \|\tilde x - x^* \right\|_1 + \lambda_1 \left(  \left\| x^* \right\|_1 - \left\|\tilde x \right\|_1  \right)	 
	& =  \lambda_1/ 2    \left( \left\| \tilde x_H - x^*_H  \right\|_1 + \left\| \tilde x_{-H} \right\|_1 \right)
	 + \lambda_1   \left( \left\| x^*_H \right\|_1 -  \left\| \tilde x_H \right\|_1 -  \left\|\tilde x_{-H} \right\|_1  \right) \nonumber\\
    & \leq   3\lambda_1/2  \left\| \tilde x_H - x^*_H \right\|_1 \nonumber\\
	& \leq   3\lambda_1/2\, \sqrt{h}  \left\| \tilde x - x^*  \right\|, 
\end{align}
where we get the first inequality by the triangle inequality and the second inequality by Cauchy-Schwarz inequality. 
Similarly,
\begin{align}\label{equation3}
	 \left\| Dx^* \right\|_1  -  \left\| D\tilde x \right\|_1 
	  & =   \left\| D_S x^* \right\|_1 -  \left\| D_S \tilde x \right\|_1  -  \left\| D_{-S}\tilde x \right\|_1 \nonumber\\
	  & \leq  \left\| D_S  \left( x^* -  \tilde x \right) \right\|_1 \nonumber\\ 
	  & \leq  2 \sqrt{s+1}  \left\| \tilde x  -  x^*  \right\|.
\end{align}
Let $\Omega_2 : =  \left\{  \left\| A \left(\tilde x  -   x^* \right) \right\|^2  \geq \frac{l}{32}  \left\| \tilde x  -  x^* \right\|^2-81\rho^2 \frac{\log p}{n} \left\| \tilde x -  x^* \right\|_1^2  \right\}$. Lemma 9 in \cite{zhang2015change} shows  that $\P \left( \Omega_2 \right) \geq  1-c_2\exp(-c_3n),$ where $c_2, c_3 > 0$ are absolute constants. Working on the event $\Omega_1\cap\Omega_2$ and combining (\ref{equation1}), (\ref{equation2}), and (\ref{equation3}) we have 
\[
		\frac{l}{32} \left\| \tilde x -  x^* \right\|^2 \leq  81\rho^2  \frac{\log p}{n} \left\|\tilde x - x^* \right\|_1^2 + 3\lambda_1/2 \sqrt{h} \left\| \tilde x - x^* \right\|  +  2\lambda_2  \sqrt{s+1} \left\|\tilde x  - x^* \right\|.
\]
Multiplying both sides by $\frac{64}{l},$ and using the simple inequality $ab \, \leq \,  a^2/2 \, + \, b^2/2$ twice, we get 
\[
    2 \left\| \tilde x - x^* \right\|^2 \leq  \frac{64\times81\rho^2}{l}  \frac{\log p}{n}  \left\|\tilde x - x^* \right\|_1^2  +  \frac{96^2\lambda_1^2}{2l^2}h  + \frac{ \left\| \tilde x  - x^* \right\|^2 }{2}  + \frac{128^2\lambda_2^2}{2l^2}  \left( s + 1 \right) + \frac{\left\| \tilde x  - x^* \right\|^2 }{2}.
\]
Then we have 
\begin{equation}\label{equation6}
   \left\| \tilde x  - x^*  \right\|^2  \leq  \frac{64\times81\rho^2}{l}  \frac{\log(p)}{n} \left\|\tilde x - x^* \right\|_1^2 + \frac{96^2\lambda_1^2}{2l^2}h  + \frac{128^2\lambda_2^2}{2l^2} \left( s+1 \right).
 \end{equation}
Next, we need to give an upper bound of $\|\tilde x \,- \, x^*\|_1^2.$ By rearranging (\ref{equation1}), we have 
\[
	 \lambda_1 \left\| \tilde x \right\|_1  \leq  
	   \left\|\tilde x - x^* \right\|_1 /2
	 + \lambda_1 \left\| x^* \right\| + \lambda_2 \left( \left\| Dx^* \right\|_1 - \left\| D\tilde x \right\|_1 \right). 
\]
Then by the triangle inequality, we have
\[
	 	\left\| \tilde x_{-H} \right\|_1 /2 \leq  
	 3\lambda_1  \left\|\tilde x_H - x^*_H \right\|_1/2
	 + \lambda_2 \left( \left\| Dx^* \right\|_1  -  \left\| D\tilde x \right\|_1 \right).
\]
Add $ \left\|\tilde x_H - x^*_H \right\|_1$ to both sides, and since $\lambda_2 / \lambda_1 = \frac{C_{\lambda_2}}{ C_{\lambda_1} \sqrt{\rho} }$ we have
\begin{equation}\label{equation4}
	 	  \left \| \tilde x  -  x^* \right\|_1  \leq  
	4  \left\| \tilde x_H - x^*_H \right\|_1 +  \frac{C_{\lambda_2}}{  C_{\lambda_1} \sqrt{\rho} } \left( \left\| Dx^* \right\|_1  -  \left\| D\tilde x \right\|_1 \right).
\end{equation}
Substitute inequalities (\ref{equation2}) and (\ref{equation3}) into (\ref{equation4}), we have
\[
		 \left\| \tilde x  -  x^* \right\|_1 \leq 
	  4\sqrt{h} \left\|\tilde x  -  x^* \right\| 
	 +  \frac{ 2C_{\lambda_2}}{ C_{\lambda_1} \sqrt{\rho} }  \sqrt{s+1}  \left\|\tilde x  -  x^* \right\|.
\]
Using the simple inequality $(a+b)^2\leq2a^2+2b^2,$ we have 
\begin{equation}\label{equation5}
		 \left\| \tilde x  -  x^* \right\|_1^2 \leq 
	\left[ 32h  +  \frac{8C_{\lambda_2}^2}{ C_{\lambda_1} ^2\rho }(s+1) \right]   \left\| \tilde x  -  x^* \right\|^2.
\end{equation} 
Substitute (\ref{equation5}) into (\ref{equation6}), we have 
\[
 \left\| \tilde x  - x^*  \right\|^2  \leq  \frac{64\times81\rho^2}{l}  \frac{\log(p)}{n} 	\left[ 32h  +  \frac{8C_{\lambda_2}^2}{ C_{\lambda_1} ^2\rho }(s+1) \right]   \left\| \tilde x  -  x^* \right\|^2 + \frac{96^2\lambda_1^2}{2l^2}h  + \frac{128^2\lambda_2^2}{2l^2} \left( s+1 \right).
\]
Since we assume $(s+h) \log p \leq  C_1 n$, where $C_1$ is a sufficiently large constant, then there exists a absolute constant  $0  <  C_2  < 1$ depending on $\rho$ such that
\[
   \frac{64\times81\rho^2}{l}  \frac{\log(p)}{n} 	\left[ 32h  +  \frac{8C_{\lambda_2}^2}{ C_{\lambda_1} ^2\rho }(s+1) \right] \leq C_2.
\]
Thus, we have 
\[
  \left \| \tilde x  -  x^* \right\|^2  \leq   \frac{1}{1-C_2}  \left[  \frac{96^2\lambda_1^2}{2l^2}  h  +\ \frac{128^2\lambda_2^2}{2l^2} (s+1) \right],
\]
which is equivalent to 
\[
	   \left \| \tilde x  -  x^* \right\|^2  \leq   C \sigma^2 \log( p \vee n) (s \vee h),
\]
where $C$ is a absolute constant depending on $l$ and $\rho$, completing the proof.
\end{proof}

\section[]{PROOFS OF THE RESULTS IN SECTION \ref{sec-post-processing}}

\begin{proof}[Proof of \Cref{thm_number_change_points}]
Let $\Omega = \left\{ d_\text{H}\left( S_I\left(\hat x \right), S\right) \leq  b_{p, n}\right\}$. By \Cref{thm_two_side_hausdorff_distance}, we have $\P (\Omega) \geq 1 - 2 (p \vee n)^{-1}$. Working on the event $\Omega$, by the definition of Hausdorff distance, we have that for any true change point, there must exist at least one  estimated change point in $S_I\left( \hat x\right)$ such that the distance between them less than or equal to $b_{p, n}$, i.e, the set 
\[
     E_{t_{i}} = \left\{ \hat t_{j}:~\hat t_{j} \in S_I(\hat x) \, \,\mbox{and}\, \, \left\vert \hat t_{j} -t_{i} \right\vert \leq b_{p, n} \right\},
\]
for any $t_i \in S$ is not empty. We also have for any estimated change points there also exist at least one  true change point such that the distance between them less that or equal to $b_{p, n}$, i.e, $S_I(\hat x) = \underset{t_i \in S} {\cup}  E_{t_{i}}$.
 We have $S_T(\hat x)$ is a subset of $S_I(\hat x)$ by construction. Thus, we have  $S_T(\hat x) = \underset{t_i \in S} {\cup}  T_{t_{i}}$, where
\[
     T_{t_{i}} = \left\{ \hat t_{j}:~\hat t_{j} \in S_T(\hat x) \, \,\mbox{and}\, \, \left\vert \hat t_{j} -t_{i} \right\vert \leq b_{p, n} \right\},
\]
for any $t_i \in S$. Also, by construction of $S_T(\hat x)$ and $t_{p, n} = 2b_{p, n}$, we have $ \left \vert T_{t_{i}} \right\vert \leq 1$, for any $t_i \in S$. Since we assume $\Delta_{p, n} > 4b_{p, n}$, we have  $\left \vert T_{t_{i}} \right\vert = 1$, and sets  $T_{t_{i}}$ for $t_i \in S$ are disjoint. Finally, we have  $ \left \vert S_T(\hat x) \right\vert = \left\vert S \right\vert $, completing the proof.
\end{proof}

\section[]{ADDITIONAL DETAILS AND RESULTS IN SECTION \ref{sec-numerical}}\label{simulation_appendix}

\subsection{Details of the Three Scenarios Considered for Each of the Four Types of Design Matrices}

Recall that the three scenarios are only one change point, nine equally-space change points and nine unequally-spaced change points.  The details are presented below.
\begin{itemize}
    \item Only one change point: $\Delta_{p, n} = 500$ and $\kappa_{p, n} = \gamma$. 
    \item Nine equally-spaced change points: $\Delta_{p, n} = 100$ and $\{ x^*_{t_1}, \ldots, x^*_{t_{10}}\} = \{0, \gamma,  0,   1.5\gamma,  0,  2\gamma,  0,  1.75\gamma,  0,  0.75\gamma\}$.   
    \item Nine unequally-spaced change points: $\{ t_1, \dots, t_9\} = \{200,  310,  360,  390,   450,  490, 570,  640,   770,   970\}$ with $\Delta_{p, n} = 30$ and $\{x^*_{t_1}, \dots, x^*_{t_{10}}\} = \{0, \gamma,  0,   1.5\gamma,  0,  2\gamma,  0,  1.75\gamma,  0,  0.75\gamma\}$.
\end{itemize}

\subsection{The Identity Design Matrix Case}
Let $n = p = 1000$ and design matrix  $A = I_n$. \Cref{table_identity_matrix_one} displays the results of the first scenario: only one change point for $\gamma = 1$ and noise levels $\sigma \in \left\{1, 2, 4 \right\}$.  \Cref{table_identity_matrix_nine_equal} displays the results of the second scenario of the identity matrix:  $\left| S \right| = 9$ and changes points are equally spaced for $\gamma = 1$ and noise levels $\sigma \in \{1, 2, 4\}$. \Cref{table_identity_matrix_nine_unequal} displays the results of the third scenario of the identity matrix : $\left| S \right| = 9$ and changes points are unequally spaced for $\gamma = 1$ and noise levels $\sigma \in \{0.5, 1, 2\}$.

\begin{table}[h]
\caption{Change point analysis results: Identity matrix and only one change point, for $\gamma = 1$ and noise levels $\sigma = \{1, 2, 4\}$. The reported values are in the form of mean (standard deviation) across $100$ independent simulations.  The bold-faced entries indicate the best method in different measures, with the convention that if $S(\hat x) = \emptyset$, then $d(S_0|S(\hat x)) = d(S_0|S(\hat x)) = p$.}\label{table_identity_matrix_one}
\begin{center}
\begin{tabular}{llllll}
   & & \textbf{FL} & \textbf{FLMF} & \textbf{FLMTF} & \textbf{ITALE}\\ \hline
 
 \multirow{3}{*}{ $\sigma = 1 $} & $d(S(\hat x)|S_0) $ &$\bf{1.13 (1.62)} $& $ 42.61 (196.74)$ & $ 44.08 (196.47)$ & $ 2.85 (5.53)$\\
 & $d(S_0|S(\hat x))$ & $268.43 (148.60) $& $ 71.71 (213.47)$ & $68.64 (214.36)$ & $ \bf{34.17 (113.35)}$ \\
 & $|S(\hat x) - S|$ & $7.70 (6.87)$ & $2.24 (2.14)$&$ \bf{0.15 (0.44)}$& $0.20 (0.77)$\\ \hline

\multirow{3}{*}{ $\sigma = 2$} & $d(S(\hat x)|S_0)$ &$\bf{2.80 (3.78)}$ & $154.82 (356.98)$ & $155.65 (356.63)$ & $6.40 (8.76)$\\
 & $d(S_0|S(\hat x))$ & $247.95 (146.89)$ & $165.58 (354.58)$ & $163.81 (355.35)$ & $ \bf{27.53 (77.67)}$ \\
 & $|S(\hat x) - S|$ & $ 7.05 (6.34)$ & $1.36 (1.36)$ & $0.26 (0.50)$ & $\bf{0.25 (0.86)}$\\ \hline

\multirow{3}{*}{ $\sigma = 4$} & $d(S(\hat x)|S_0)$ & $\bf{5.58 (6.01)}$ & $408.93 (479.58)$ & $409.26 (479.30)$ & $14.31 (18.97)$\\
 & $d(S_0|S(\hat x))$ & $292.22 (151.84)$ & $428.27 (470.39)$ & $ 428.61 (470.63)$ & $ \bf{42.67 (99.16)}$ \\
 & $|S(\hat x) - S|$ & $7.60 (7.23)$ & $0.78 (0.73)$ & $0.47 (0.50)$ & $\bf{0.24 (0.75)}$\\ 

\end{tabular}
\end{center}
\end{table}

\begin{table}[h]
\caption{Change point analysis results: Identity matrix and nine change points with equal spacing, for $\gamma = 1$ and noise levels $\sigma = \{1, 2, 4\}$. The reported values are in the form of mean (standard deviation) across $100$ independent simulations. The bold-faced entries indicate the best method in different measures, with the convention that if $S(\hat x) = \emptyset$, then $d(S_0|S(\hat x)) = d(S_0|S(\hat x)) = p$.}\label{table_identity_matrix_nine_equal}
\begin{center}
\begin{tabular}{llllll}
   & & \textbf{FL} & \textbf{FLMF} & \textbf{FLMTF} & \textbf{ITALE}\\ \hline
 
 \multirow{3}{*}{ $\sigma = 1 $} & $d(S(\hat x)|S_0) $ &$\bf{2.98 (2.60)} $& $90.55 (54.25)$ & $ 92.96 (54.64)$ & $ 17.05 (26.47)$\\
 & $d(S_0|S(\hat x))$ & $ 75.17 (13.91) $& $ 12.50 (10.81)$ & $\bf{ 8.62 (11.93)}$ & $ 12.36 (14.68)$ \\
 & $|S(\hat x) - S|$ & $39.24 (9.25)$ & $17.21 (6.52)$&$ 1.34 (0.95)$& $\bf{ 0.34 (0.68)}$\\ \hline

\multirow{3}{*}{ $\sigma = 2$} & $d(S(\hat x)|S_0)$ &$\bf{6.74 (5.88)}$ & $162.57 (93.95)$ & $ 164.22 (93.90)$ & $69.39 (67.41)$\\
 & $d(S_0|S(\hat x))$ & $ 71.98 (13.35)$ & $ 14.17 (13.84)$ & $\bf{ 11.07 (14.29)}$ & $ 20.16 (16.96)$ \\
 & $|S(\hat x) - S|$ & $35.48 (8.36)$ & $ 9.20 (6.59)$ & $ 2.62 (1.41)$ & $\bf{ 1.32 (2.25)}$\\ \hline

\multirow{3}{*}{ $\sigma = 4$} & $d(S(\hat x)|S_0)$ & $\bf{12.14 (8.43)}$ & $ 256.40 (121.36)$ & $257.18 (121.28)$ & $ 150.82 (96.14)$\\
 & $d(S_0|S(\hat x))$ & $71.17 (16.04)$ & $14.83 (15.76)$ & $ \bf{13.59 (16.08)}$ & $31.68 (22.11)$ \\
 & $|S(\hat x) - S|$ & $29.19 (7.84)$ & $3.55 (2.86)$ & $4.41 (1.60)$ & $\bf{2.24 (1.86)}$\\

\end{tabular}
\end{center}
\end{table}

\begin{table}[h]
\caption{Change point analysis results: Identity matrix and nine change points with unequal spacing, for $\gamma = 1$ and noise levels $\sigma = \{0.5, 1, 2\}$. The reported values are in the form of mean (standard deviation) across $100$ independent simulations. The bold-faced entries indicate the best method in different measures, with the convention that if $S(\hat x) = \emptyset$, then $d(S_0|S(\hat x)) = d(S_0|S(\hat x)) = p$.}
\label{table_identity_matrix_nine_unequal}
\begin{center}
\begin{tabular}{llllll}
   & & \textbf{FL} & \textbf{FLMF} & \textbf{FLMTF} & \textbf{ITALE}\\ \hline

\multirow{3}{*}{$\sigma = 0.5$} & $d(S(\hat x)|S_0)$ &$\bf{1.49 (1.33)}$ & $33.97 (69.92)$ & $ 43.12 (68.58)$ & $ 10.98 (34.03)$\\
 & $d(S_0|S(\hat x))$ & $275.59 (26.11)$ & $31.85 (59.19)$ & $27.68 (59.94)$ & $\bf{26.41 (63.86)}$ \\
 & $|S(\hat x) - S|$ & $46.87 (9.52)$ & $ 29.17 (7.65)$ & $ 0.65 (0.70)$ & $\bf{0.62 (1.38)}$\\ \hline

\multirow{3}{*}{$\sigma = 1$} & $d(S(\hat x)|S_0)$ &$\bf{ 8.83 (8.96)}$ & $178.60 (74.31)$ & $181.26 (73.92)$ & $124.98 (82.86)$\\
 & $d(S_0|S(\hat x))$ & $275.53 (24.91)$ & $28.36 (58.49)$ & $\bf{25.07 (58.78)}$ & $25.28 (51.14)$ \\
 & $|S(\hat x) - S|$ & $38.28 (13.73)$ & $9.55 (8.52)$ & $2.74 (1.38)$ & $\bf{1.80 (1.29)}$\\ \hline

\multirow{3}{*}{$\sigma = 2$} & $d(S(\hat x)|S_0)$ &$\bf{3.65 (3.22)}$ & $124.54 (86.36)$ & $128.70 (86.66)$ & $62.47 (82.11)$\\
 & $d(S_0|S(\hat x))$ & $278.66 (25.02)$ & $21.59 (40.03)$ & $\bf{17.72 (41.07)}$ & $27.45 (58.93)$ \\
 & $|S(\hat x) - S|$ & $42.08 (8.66)$ & $17.43 (6.94)$ & $1.40 (1.02)$ & $\bf{0.91 (1.30)}$\\
 
\end{tabular}
\end{center}
\end{table}

\subsection{The Band Design Matrix Case} 

\Cref{table_band_matrix_one} displays the results of the first scenario of band matrix: only one change point, with bandwidth $h \in \{ 1, 5, 10, 50\}$, noise level $\sigma \in \{1, 2, 4, 8\}$ and $\gamma \in \{0.25, 0.5, 1, 2\}$. \Cref{table_band_matrix_nine_equal} displays the results of the second scenario of band matrix:  $\left| S \right| = 9$  and change points are equally spaced, with bandwidth $h \in \{ 1, 5, 10, 50\}$, noise level $\sigma \in \{1, 2, 4, 8\}$ and $\gamma \in \{0.25, 0.5, 1, 2\}$. \Cref{table_band_matrix_nine_unequal} displays the results of the third scenario of band matrix:  $\left| S \right| = 9$  and change points are unequally spaced, with bandwidth $h \in \{ 1, 5, 10, 50\}$, noise level $\sigma \in \{0.5, 1, 2, 4\}$ and $\gamma \in \{0.25, 0.5, 1, 2\}$. 

\begin{table}[h]
\caption{Change point analysis results: band matrix and only one change point, with bandwidth $h \in \{ 1, 5, 10, 50\}$, noise level $\sigma \in \{1, 2, 4, 8\}$ and $\gamma \in \{0.25, 0.5, 1, 2\}$. The reported values are in the form of mean (standard deviation) across $100$ independent simulations. The bold-faced entries indicate the best method in different measures, with the convention that if $S(\hat x) = \emptyset$, then $d(S_0|S(\hat x)) = d(S_0|S(\hat x)) = p$.}\label{table_band_matrix_one}
\begin{center}
\begin{tabular}{llllll}
   & & \textbf{FL} & \textbf{FLMF} & \textbf{FLMTF} & \textbf{ITALE}
\\ \hline
\multirow{3}{*}{ $\bf{h = 1}$, $\sigma = 2, \gamma = 1$} & $d(S(\hat x)|S_0)$ &  $\bf{0.82 (1.14)}$& $1.10 (3.42)$ & $2.76 (4.01)$ & $4.91 (7.02)$ \\
 & $d(S_0|S(\hat x))$ & $293.78 (148.88)$ & $20.70 (55.78)$ & $\bf{16.66 (56.57)}$ & $148.02 (194.35)$ \\
 & $|S(\hat x) - S|$ & $7.67 (6.71)$  & $2.89 (2.42)$ &$\bf{ 0.15 (0.41)}$  & $2.15 (3.30)$\\ \hline

\multirow{3}{*}{ $\bf{h = 5}$, $\sigma = 2, \gamma = 1$} & $d(S(\hat x)|S_0)$ & $0.13 (0.44)$ & $\bf{0.11 (0.40)}$ & $2.02 (2.07)$ & $2.07 (3.02)$\\
 & $d(S_0|S(\hat x))$ & $277.38 (41.19)$ & $19.25 (52.05)$ & $\bf{12.85 (53.12)}$ & $41.04 (105.91)$ \\
 & $|S(\hat x) - S|$ &  $8.19 (5.09)$ & $4.97 (3.38)$ & $\bf{0.07 (0.26)}$ & $0.58 (1.37)$\\ \hline

\multirow{3}{*}{ $\bf{h = 10}$, $\sigma = 2, \gamma = 1$} & $d(S(\hat x)|S_0)$ &$\bf{0.08 (0.31)}$ & $\bf{0.08 (0.31)}$ & $1.71 (1.55)$ &$0.95 (1.47)$\\
 & $d(S_0|S(\hat x))$ & $276.97 (137.46)$ & $22.85 (61.57)$ & $15.88 (62.97)$ & $\bf{11.86 (52.02)}$ \\
 & $|S(\hat x) - S|$ & $8.70 (6.10)$ & $4.93 (2.83)$ & $\bf{0.09 (0.32)}$ & $0.40 (1.27)$\\ \hline

\multirow{3}{*}{ $\bf{h = 50}$, $\sigma = 2, \gamma = 1 $} & $d(S(\hat x)|S_0)$ &$\bf{0.00 (0.00)}$ & $\bf{0.00 (0.00)}$ & $1.71 (1.77)$ & $0.01 (0.10)$\\
 & $d(S_0|S(\hat x))$ & $253.69 (152.57)$ & $32.79 (87.30)$ & $24.94 (89.13)$ & $\bf{20.75 (76.29)}$ \\
 & $|S(\hat x) - S|$ & $8.80 (7.82)$ & $5.51 (3.29)$ & $\bf{ 0.11 (0.47)}$ & $0.19 (0.63)$\\ \hline

\multirow{3}{*}{ $h = 10$, $\bf{\sigma = 1}$, $\gamma = 1$} & $d(S(\hat x)|S_0)$ &  $\bf{0.01 (0.10)}$ & $\bf{0.01 (0.10)}$& $2.00 (1.95)$ & $0.31 (0.63)$ \\
 & $d(S_0|S(\hat x))$ & $263.99 (162.93)$ & $43.15 (111.040$ &  $\bf{35.88 (113.09)}$ & $40.53 (101.97)$ \\
 & $|S(\hat x)-S|$ & $9.35 (6.82)$ & $5.36 (3.15)$ &$\bf{0.12 (0.38)}$  & $0.77 (1.48)$\\ \hline

\multirow{3}{*}{$h = 10$, $\bf{\sigma = 4}$, $\gamma = 1$} & $d(S(\hat x)|S_0)$ & $0.17 (0.40)$ & $\bf{0.14 (0.35)}$ & $2.23 (2.21)$  & $1.34 (2.28)$\\
 & $d(S_0|S(\hat x))$ & $271.85 (147.15)$ & $29.90 (89.45)$ & $\bf{23.88 (90.75)}$ & $27.48 (91.32)$ \\
 & $|S(\hat x)-S|$ & $8.35 (6.00)$ & $4.37 (3.26)$ & $\bf{0.12 (0.46)}$& $0.29 (0.76)$\\ \hline

\multirow{3}{*}{$h = 10$, $\bf{\sigma = 8}$, $\gamma = 1$} & $d(S(\hat x)|S_0)$ & $0.42 (0.94)$ & $\bf{0.38 (1.19)}$ & $1.89 (2.36)$ & $2.47 (4.99)$\\
 & $d(S_0|S(\hat x))$ & $257.18 (146.17)$ & $22.67 (71.03)$ & $\bf{17.41 (71.92)}$ & $22.22 (83.53)$ \\
 & $|S(\hat x)-S|$ & $7.62 (6.41)$ & $3.32 (2.32)$ & $\bf{0.13 (0.42)}$ & $0.22 (0.96)$\\ \hline

\multirow{3}{*}{ $h = 10, \sigma = 2$, $\bf{\gamma= 0.25}$} & $d(S(\hat x)|S_0)$ &  $\bf{1.89 (2.49)}$ & $3.70 (10.76) $& $5.00 (10.67) $& $5.96 (8.49)$ \\
 & $d(S_0|S(\hat x))$ & $237.72 (157.50)$ & $21.56 (65.00)$ & $\bf{19.27 (65.41)}$ & $38.32 (104.74)$ \\
 & $|S(\hat x) - S|$ & $7.17 (7.41)$ & $1.35 (1.22)$ & $\bf{0.13 (0.34)}$  & $0.23 (0.72)$\\ \hline

 \multirow{3}{*}{ $h = 10,  \sigma = 2$, $\bf{\gamma = 0.5}$} & $d(S(\hat x)|S_0)$ & $0.55 (0.95)$ & $\bf{ 0.41 (0.75)}$ & $2.04 (2.06)$ & $2.57 (4.18)$\\
 & $d(S_0|S(\hat x))$ & $284.98 (151.33)$ & $16.73 (46.16)$ & $\bf{12.34 (46.93)}$ & $42.24 (120.00)$ \\
 & $|S(\hat x) - S |$ & $8.63 (6.43)$ & $3.68 (2.92)$ & $\bf{0.11 (0.31)}$ & $0.45 (1.40)$\\ \hline

\multirow{3}{*}{ $h = 10, \sigma = 2$, $\bf{\gamma = 2}$} & $d(S(\hat x)|S_0)$ &$\bf{0.00 (0.00)}$ & $\bf{0.00 (0.00)}$& $1.68(1.80)$ & $0.25 (0.76)$\\
 & $d(S_0|S(\hat x))$ & $288.39 (147.41)$ & $22.01 (71.90)$ & $\bf{14.40 (73.14)}$ & $16.76 (73.52)$ \\
 & $|S(\hat x) - S |$ & $ 9.32 (6.76)$ & $5.01 (2.85)$ & $\bf{0.04 (0.24)}$ & $0.37 (0.97)$\\

\end{tabular}
\end{center}
\end{table}

\begin{table}[h]
\caption{Change point analysis results: band matrix and nine equally spaced change points, with bandwidth $h \in \{ 1, 5, 10, 50\}$, noise level $\sigma \in \{1, 2, 4, 8\}$ and $\gamma \in \{0.25, 0.5, 1, 2\}$. The reported values are in the form of mean (standard deviation) across $100$ independent simulations. The bold-faced entries indicate the best method in different measures, with the convention that if $S(\hat x) = \emptyset$, then $d(S_0|S(\hat x)) = d(S_0|S(\hat x)) = p$.}
\label{table_band_matrix_nine_equal}
\begin{center}
\begin{tabular}{llllll}
   & & \textbf{FL} & \textbf{FLMF} & \textbf{FLMTF} & \textbf{ITALE}
\\ \hline
\multirow{3}{*}{ $\bf{h = 1}$, $\sigma = 2, \gamma = 1$} & $d(S(\hat x)|S_0)$ &  $\bf{2.71 (1.86)}$ & $53.19 (56.22) $& $ 56.09 (56.68)$ & $9.80 (8.76)$ \\
 & $d(S_0|S(\hat x))$ & $73.91 (15.48)$ & $16.15 (14.92)$ & $\bf{11.39 (16.49)}$ & $62.04 (23.93)$ \\
 & $|S(\hat x) - S|$ & $41.26 (8.83)$ & $23.58 (8.37)$ & $\bf{0.75 (0.86)}$  & $20.96 (17.94)$\\ \hline

\multirow{3}{*}{ $\bf{h = 5}$, $\sigma = 2, \gamma = 1$} & $d(S(\hat x)|S_0)$ & $\bf{0.61 (0.79)}$ & $1.52 (9.18)$ & $4.65 (9.74)$ & $3.38 (2.32)$ \\
 & $d(S_0|S(\hat x))$ & $73.63  (14.59)$ & $14.04 (10.25)$ & $\bf{6.00 (11.02)}$ & $34.83 (29.15)$ \\
 & $|S(\hat x) - S|$ & $43.55 (9.09)$ & $34.59 (7.87)$ & $\bf{0.06 (0.24)}$& $6.74 (6.18)$\\ \hline

\multirow{3}{*}{ $\bf{h = 10}$, $\sigma = 2,  \gamma = 1$} & $d(S(\hat x)|S_0)$ & $0.27 (0.60)$ & $\bf{0.21 (0.43)}$ & $3.30 (1.34)$ & $1.95 (1.54)$\\
 & $d(S_0|S(\hat x))$ & $71.06 (14.26)$  & $13.44 (7.96)$ & $\bf{4.92 (8.37)}$ & $20.36 (20.83)$ \\
 & $|S(\hat x)|$ & $44.21 (7.39)$ & $40.55 (7.57)$ & $\bf{0.04 (0.20)}$& $3.46 (2.97)$ \\ \hline

\multirow{3}{*}{ $\bf{h = 50}$, $\sigma = 2,  \gamma = 1$} & $d(S(\hat x)|S_0)$ &$\bf{0.00 (0.00)}$ & $\bf{0.00 (0.00)}$ & $3.82 (1.74)$ & $0.09 (0.32)$\\
 & $d(S_0|S(\hat x))$ & $67.10 (15.22)$ & $15.44 (12.46)$ & $7.73 (14.10)$ & $\bf{1.56 (6.92)}$ \\
 & $|S(\hat x) - S|$ & $44.93 (8.72)$ & $45.60 (8.70)$ & $\bf{ 0.11 (0.40)}$& $0.17 (0.49)$\\ \hline

\multirow{3}{*}{ $h = 10$, $\bf{\sigma = 1}$, $\gamma = 1$} & $d(S(\hat x)|S_0)$ &  $\bf{0.07 (0.26)}$ &$\bf{0.07 (0.26)}$ & $3.80 (1.72)$ & $1.22 (1.10)$ \\
 & $d(S_0|S(\hat x))$ & $71.53 (14.67)$ & $14.44 (8.33)$ & $\bf{6.88 (10.13)}$ & $21.53 (23.27)$ \\
 & $|S(\hat x) - S|$ & $45.24 (9.92)$ & $44.75 (8.78)$ & $\bf{0.12 (0.38)}$  & $4.22 (3.45)$\\ \hline

\multirow{3}{*}{ $h = 10$, $\bf{\sigma = 4}$, $\gamma = 1$} & $d(S(\hat x)|S_0)$ & $0.60 (0.77)$ &$\bf{0.49 (0.70)}$ & $3.79 (1.46)$ & $3.48 (2.77)$\\
 & $d(S_0|S(\hat x))$ & $70.73 (15.05)$ & $14.80 (9.74)$ & $\bf{6.76 (10.97)}$ & $20.25 (21.21)$ \\
 & $|S(\hat x) - S|$ & $43.52 (7.80)$ & $34.89 (7.34)$ & $\bf{0.10 (0.33)}$ & $2.95 (2.72)$\\ \hline

\multirow{3}{*}{ $h = 10$, $\bf{\sigma = 8}$, $\gamma = 1$} & $d(S(\hat x)|S_0)$ & $\bf{1.46 (1.28)}$ & $16.74 (34.27)$ & $19.92 (35.13)$ & $4.94 (3.39)$\\
 & $d(S_0|S(\hat x))$ & $75.44 (13.69)$ & $17.27 (16.89)$ & $\bf{11.07 (17.42)}$ & $19.95 (18.65)$ \\
 & $|S(\hat x)- S|$ & $44.37 (8.48)$ & $32.02 (7.54)$ & $\bf{0.31 (0.61)}$ & $2.43 (2.40)$\\ \hline

\multirow{3}{*}{ $h = 10, \sigma = 2$, $\bf{\gamma = 0.25}$} & $d(S(\hat x)|S_0)$ & $\bf{5.84 (4.23)}$ & $126.76 (72.29)$&  $128.71 (72.11)$ & $62.03 (62.73)$ \\
 & $d(S_0|S(\hat x))$ & $74.39 (15.97)$  & $16.95 (16.89)$ & $\bf{13.82 (17.78)}$ & $34.22 (22.65)$ \\
 & $|S(\hat x)- S|$ & $39.20 (10.32)$ & $11.68 (5.94)$ & $\bf{2.14 (1.16)}$  & $2.81 (3.01)$ \\ \hline

 \multirow{3}{*}{ $h = 10, \sigma = 2$, $\bf{\gamma = 0.5}$} & $d(S(\hat x)|S_0)$ & $\bf{1.51 (1.34)}$ & $17.36 (34.82)$ & $20.31 (35.51)$ & $6.29 (6.71)$\\
 & $d(S_0|S(\hat x))$ & $70.93 (13.86)$ & $15.47 (14.01)$ & $\bf{9.39 (15.57)}$ & $19.52 (19.52)$ \\
 & $|S(\hat x)- S|$ & $42.06 (10.29)$ & $29.73 (9.87)$ & $\bf{0.36 (0.75)}$& $2.34 (2.95)$\\ \hline

\multirow{3}{*}{ $h = 10, \sigma = 2$, $\bf{\gamma = 2}$} & $d(S(\hat x)|S_0)$ &$\bf{0.01 (0.10)}$ & $\bf{0.01 (0.10)}$ & $4.35 (6.44)$ & $0.46 (0.77)$\\
 & $d(S_0|S(\hat x))$ & $74.42 (14.35)$ & $17.96 (17.60)$ & $\bf{9.69 (19.38)}$ & $22.01 (25.52)$ \\
 & $|S(\hat x)- S|$ & $46.63 (10.19)$ & $46.42 (8.88)$ & $\bf{0.12 (0.48)}$& $3.76 (3.41)$\\

\end{tabular} 
\end{center}
\end{table}

\begin{table}[h]
\caption{Change point analysis results: band matrix and nine unequally spaced change points, with bandwidth $h \in \{ 1, 5, 10, 50\}$, noise level $\sigma \in \{0.5, 1, 2, 4\}$ and $\gamma \in \{0.25, 0.5, 1, 2\}$. The reported values are in the form of mean (standard deviation) across $100$ independent simulations. The bold-faced entries indicate the best method in different measures, with the convention that if $S(\hat x) = \emptyset$, then $d(S_0|S(\hat x)) = d(S_0|S(\hat x)) = p$.}\label{table_band_matrix_nine_unequal}
\begin{center}
\begin{tabular}{llllll}
   & & \textbf{FL} & \textbf{FLMF} & \textbf{FLMTF} & \textbf{ITALE}
\\ \hline
\multirow{3}{*}{ $\bf{h = 1}$, $\sigma = 1, \gamma = 1$} & $d(S(\hat x)|S_0)$ &  $\bf{2.13 (1.46)}$ & $31.01 (66.40)$& $ 40.68 (65.24)$ & $ 6.95 (6.72)$ \\
 & $d(S_0|S(\hat x))$ & $ 274.44 (32.13)$ & $ 21.77 (35.27)$ & $\bf{17.88 (34.28)}$ & $ 189.09 (97.11)$ \\
 & $|S(\hat x) - S|$ & $ 48.37 (11.52)$ & $ 30.73 (10.08)$ & $\bf{0.73 (0.75)}$  & $20.27 (18.31)$\\ \hline

\multirow{3}{*}{ $\bf{h = 5}$, $\sigma = 1,  \gamma = 1$} & $d(S(\hat x)|S_0)$ & $0.30 (0.58)$ & $ \bf{0.26 (0.51)}$ & $19.13 (8.54)$ & $3.03 (2.33)$ \\
 & $d(S_0|S(\hat x))$ & $ 278.84  (26.00)$ & $26.91 (50.28)$ & $\bf{25.95 (50.48)}$ & $96.28 (103.83)$ \\
 & $|S(\hat x) - S|$ & $50.71 (8.83)$ & $40.76 (7.55)$ & $\bf{1.02 (0.67)}$ & $9.48 (9.08)$\\ \hline

\multirow{3}{*}{ $\bf{h = 10}$, $\sigma = 1,  \gamma = 1$} & $d(S(\hat x)|S_0)$ & $0.09 (0.29)$ & $\bf{0.07 (0.26)}$ & $19.72 (10.15)$ & $1.12 (1.30)$\\
 & $d(S_0|S(\hat x))$ & $280.38 (22.33)$  & $31.59 (57.10)$ & $\bf{30.52 (57.50)}$ & $66.34 (92.42)$ \\
 & $|S(\hat x) - S|$ & $51.81 (10.03)$ & $44.08 (9.25)$ & $\bf{1.06 (0.76)}$ & $4.70 (4.28)$ \\ \hline

\multirow{3}{*}{ $\bf{h = 50}$, $\sigma = 1,  \gamma = 1$} & $d(S(\hat x)|S_0)$ &$\bf{0.00 (0.00)}$ & $\bf{0.00 (0.00)}$ & $ 21.42 (4.89) $ & $0.03 (0.17)$\\
 & $d(S_0|S(\hat x))$ & $267.51 (28.78)$ & $31.34 (59.91)$ & $32.07 (59.27)$ & $\bf{2.62 (9.38)}$ \\
 & $|S(\hat x) - S|$ & $49.10 (9.47)$ & $45.97 (8.95)$ & $1.31 (0.65)$ & $\bf{0.26 (0.63)}$\\ \hline

\multirow{3}{*}{ $h = 10$, $\bf{\sigma = 0.5}$, $\gamma = 1$} & $d(S(\hat x)|S_0)$ &  $\bf{0.00 (0.00)}$ & $\bf{0.00 (0.00)}$ & $20.33 (7.61)$ & $0.63 (1.05)$ \\
 & $d(S_0|S(\hat x))$ & $274.40 (26.76)$ & $ 38.37 (69.44)$ & $\bf{37.60 (69.58)}$ & $50.47 (78.13)$ \\
 & $|S(\hat x) - S|$ & $ 50.92 (10.23)$ & $44.56 (9.77)$ & $\bf{1.14 (0.64)}$  & $4.21 (3.90)$\\ \hline

\multirow{3}{*}{ $h = 10$, $\bf{\sigma = 2}$, $\gamma = 1$} & $d(S(\hat x)|S_0)$ & $0.29 (0.57)$ & $\bf{0.25 (0.50)}$ & $ 17.90 (7.84)$ & $ 2.03 (1.58)$\\
 & $d(S_0|S(\hat x))$ & $ 274.66 (26.76)$ & $ 27.48 (55.40)$ & $\bf{25.99 (55.62)}$ & $ 51.11 (82.25)$ \\
 & $|S(\hat x) - S|$ & $49.79 (11.00)$ & $39.88 (9.51)$ & $\bf{0.93 (0.62)}$ & $ 3.83 (3.87)$\\ \hline

\multirow{3}{*}{ $ h= 10$, $\bf{\sigma = 4}$, $\gamma = 1$} & $d(S(\hat x)|S_0)$ & $ 0.76 (0.84)$ & $ \bf{0.65 (0.82)}$ & $ 14.59 (9.27)$ & $4.10 (3.21)$\\
 & $d(S_0|S(\hat x))$ & $ 274.32 (25.61)$ & $ 37.71 (73.00)$ & $\bf{35.15 (73.93)}$ & $39.33 (67.42)$ \\
 & $|S(\hat x)- S|$ & $49.82 (9.54)$ & $ 35.92 (8.63)$ & $\bf{0.80 (0.78)}$ & $ 3.29 (4.06)$\\ \hline

\multirow{3}{*}{ $h = 10, \sigma = 1$, $\bf{\gamma = 0.25}$} & $d(S(\hat x)|S_0)$ & $\bf{3.21 (2.39)}$ & $ 92.24 (89.96)$&  $ 97.05 (89.28)$ & $ 50.65 (77.25)$ \\
 & $d(S_0|S(\hat x))$ & $ 274.59 (29.15)$  & $ 23.60 (37.97)$ & $\bf{19.17 (38.65)}$ & $ 53.25 (78.20)$ \\
 & $|S(\hat x)- S|$ & $ 44.51 (10.67)$ & $ 21.18 (7.91)$ & $\bf{1.12 (0.99)}$  & $2.26 (2.60)$ \\ \hline

 \multirow{3}{*}{ $h = 10, \sigma = 1$, $\bf{\gamma = 0.5}$} & $d(S(\hat x)|S_0)$ & $\bf{0.78 (1.05)}$ & $ 4.79 (28.05)$ & $ 19.25 (27.72)$ & $ 3.76 (2.99)$\\
 & $d(S_0|S(\hat x))$ & $ 278.48 (21.94)$ & $ 27.10 (45.32)$ & $\bf{23.66 (46.00)}$ & $33.96 (60.77)$ \\
 & $|S(\hat x)- S|$ & $ 50.98 (10.33)$ & $ 38.07 (9.60)$ & $\bf{0.66 (0.62)}$ & $2.85 (2.26)$\\ \hline

\multirow{3}{*}{ $h = 10, \sigma = 1$, $\bf{\gamma = 2}$} & $d(S(\hat x)|S_0)$ &$\bf{0.00 (0.00)}$ & $\bf{0.00 (0.00)}$ & $ 22.10 (7.59)$ & $0.32 (0.60)$\\
 & $d(S_0|S(\hat x))$ & $ 276.78 (26.56)$ & $ 28.49 (50.48)$ & $\bf{28.50 (50.22)}$ & $60.63 (88.71)$ \\
 & $|S(\hat x)- S|$ & $51.43 (9.15)$ & $46.91 (9.08)$ & $\bf{1.28 (0.62)}$& $4.54 (5.66)$\\ 

\end{tabular}
\end{center}
\end{table}

\subsection{Gaussian Random Matrix with Identity Covariance Matrix}

\Cref{table_random_matrix_one} displays the results of the first scenario of Gaussian random matrix with identity covariance matrix: only one change point with ratio $ n/p \in \{0.25, 0.5, 0.75, 1 \}$, noise level $\sigma \in \{1, 2, 4, 8\}$ and $\gamma \in \{0.25, 0.5, 1, 2\}$. \Cref{table_random_matrix_nine_equal} displays the results of the second scenario of Gaussian random matrix with identity covariance matrix:  $\left| S \right| = 9$  and change points are equally spaced, with ratio $ n/p \in \{0.25, 0.5, 0.75, 1 \}$, noise level $\sigma \in \{1, 2, 4, 8\}$ and $\gamma \in \{0.25, 0.5, 1, 2\}$. \Cref{table_random_matrix_nine_unequal} displays the results of the third scenario of Gaussian random matrix with identity covariance matrix:  $\left| S \right| = 9$  and change points are unequally spaced, with ratio $ n/p \in \{0.25, 0.5, 0.75, 1 \}$, noise level $\sigma \in \{0.5, 1, 2, 4\}$ and $\gamma \in \{0.25, 0.5, 1, 2\}$. 

\begin{table}[h]
\caption{Change point analysis results: Gaussian random matrix with identity covariance matrix and only one change point, with ratio $n/p \in \{ 0.25, 0.5, 0.75, 1\}$, noise level $\sigma \in \{1, 2, 4, 8\}$ and $\gamma \in \{0.25, 0.5, 1, 2\}$.  The reported values are in the form of mean (standard deviation) across $100$ independent simulations. The bold-faced entries indicate the best method in different measures, with the convention that if $S(\hat x) = \emptyset$, then $d(S_0|S(\hat x)) = d(S_0|S(\hat x)) = p$.}\label{table_random_matrix_one}
\begin{center}
\begin{tabular}{llllll}
   & & \textbf{FL} & \textbf{FLMF} & \textbf{FLMTF} & \textbf{ITALE}
\\ \hline

\multirow{3}{*}{ $\bf{n/p  = 0.25}$, $\sigma = 2,  \gamma = 1$} & $d(S(\hat x)|S_0)$ &  $\bf{0.00 (0.00)}$ & $\bf{0.00 (0.00)}$ & $ 2.08 (1.68)$ &  $\bf{0.00 (0.00)}$\\
 & $d(S_0|S(\hat x))$ & $ 263.81 (166.49)$ & $ 39.78 (104.58)$ & $ 32.85 (106.43)$ & $\bf{6.48 (49.56)}$ \\
 & $|S(\hat x) - S|$ & $ 8.40 (6.98)$ & $ 5.25 (3.92)$ & $0.13 (0.42)$  & $\bf{0.04 (0.24)}$\\ \hline

\multirow{3}{*}{ $\bf{n/p =0.5}$, $\sigma = 2, \gamma = 1$} & $d(S(\hat x)|S_0)$ & $\bf{0.00 (0.00)}$ & $\bf{0.00 (0.00)}$& $1.95 (1.90)$ & $\bf{0.00 (0.00)}$\\
 & $d(S_0|S(\hat x))$ & $254.03  (165.72)$ & $ 14.32 (16.68)$ & $\bf{6.59 (17.81)}$ & $ 5.27 (49.95)$ \\
 & $|S(\hat x) - S|$ & $8.35 (5.97)$ & $5.10 (3.15)$ &$\bf{0.08 (0.27)}$ & $0.09 (0.81)$ \\ \hline

\multirow{3}{*}{ $\bf{n/p = 0.75}$, $\sigma = 2,  \gamma = 1$} & $d(S(\hat x)|S_0)$ & $\bf{0.00 (0.00)}$ & $\bf{0.00 (0.00)}$ & $2.03 (1.96)$ & $\bf{0.00 (0.00)}$\\
 & $d(S_0|S(\hat x))$ & $261.30 (145.50)$  & $22.27 (67.39)$ & $\bf{13.98 (67.19)}$ & $19.12 (87.54)$ \\
 & $|S(\hat x) - S|$ & $8.27 (6.03)$ & $5.36 (2.85)$ & $\bf{0.05 (0.26)}$ & $ 0.14 (0.62)$ \\ \hline

\multirow{3}{*}{ $\bf{n/p = 1}$, $\sigma = 2,  \gamma = 1$} & $d(S(\hat x)|S_0)$ &$\bf{0.00 (0.00)}$ & $\bf{0.00 (0.00)}$ & $2.00 (2.07) $ & $\bf{0.00 (0.00)}$ \\
 & $d(S_0|S(\hat x))$ & $ 253.71 (146.34)$ & $ 32.40 (84.83)$ & $ 25.07 (85.93)$ & $\bf{14.01 (77.05)}$ \\
 & $|S(\hat x) - S|$ & $9.11 (7.97)$ & $5.02 (3.15)$ &  $0.13 (0.44)$ & $\bf{0.08 (0.39)}$ \\ \hline

\multirow{3}{*}{ $n/p =0.5$, $\bf{\sigma = 1}$, $\gamma = 1$} & $d(S(\hat x)|S_0)$ & $\bf{0.00 (0.00)}$ &$\bf{0.00 (0.00)}$ & $1.90 (2.23)$ & $\bf{0.00 (0.00)}$ \\
 & $d(S_0|S(\hat x))$ & $ 176.00 (153.44)$ & $ 11.20 (8.65)$ & $ 4.50 (10.38)$ & $\bf{0.00 (0.00)}$\\
 & $|S(\hat x) - S|$ & $ 5.50 (3.24)$ & $ 3.90 (2.56)$ &  $ 0.10 (0.32)$  & $\bf{0.00 (0.00)}$\\ \hline

\multirow{3}{*}{ $n/p =0.5$, $\bf{\sigma = 4}$, $\gamma = 1$} & $d(S(\hat x)|S_0)$ & $\bf{0.00 (0.00)}$ & $\bf{0.00 (0.00)}$ & $1.78 (1.69)$ & $\bf{0.00 (0.00)}$ \\
 & $d(S_0|S(\hat x))$ & $ 275.21 (155.44)$ & $ 25.82 (77.88)$ & $ 17.92 (78.88)$ & $\bf{11.50 (67.64)}$ \\
 & $|S(\hat x) - S|$ & $8.79 (5.75)$ & $ 5.64 (3.54)$ & $0.09 (0.38)$ & $\bf{0.07 (0.33)}$\\ \hline

\multirow{3}{*}{ $n/p =0.5$, $\bf{\sigma = 8}$, $\gamma = 1$} & $d(S(\hat x)|S_0)$ & $\bf{0.00 (0.00)}$ & $\bf{0.00 (0.00)}$ & $ 1.90 (1.83)$ & $\bf{0.00 (0.00)}$\\
 & $d(S_0|S(\hat x))$ & $253.01 (143.88)$ & $ 19.94 (61.84)$ & $\bf{11.74 (62.91)}$ &  $15.73 (66.81)$  \\
 & $|S(\hat x)- S|$ & $ 8.24 (5.16)$ & $ 5.35 (3.35)$ & $\bf{0.04 (0.24)}$ & $0.18 (0.66)$\\ \hline

\multirow{3}{*}{ $n/p =0.5, \sigma = 2$, $\bf{\gamma = 0.25}$} & $d(S(\hat x)|S_0)$ & $\bf{0.02 (0.14)}$ & $\bf{0.02 (0.14)}$ &  $ 1.86 (1.69)$ & $ 0.12 (0.38)$ \\
 & $d(S_0|S(\hat x))$ & $ 250.39 (150.71)$  & $ 29.20 (75.24)$ &  $\bf{22.19 (76.37)}$ & $22.47 (98.39)$ \\
 & $|S(\hat x)- S|$ & $8.72 (8.43$ & $5.66 (3.64)$ &  $0.18 (0.73)$ & $\bf{0.17 (0.88)}$ \\ \hline

 \multirow{3}{*}{ $n/p =0.5, \sigma = 2$, $\bf{\gamma = 0.5}$} & $d(S(\hat x)|S_0)$ & $\bf{0.00 (0.00)}$ & $\bf{0.00 (0.00)}$ & $1.97 (1.81)$ & $\bf{0.00 (0.00)}$\\
 & $d(S_0|S(\hat x))$ & $ 239.17 (149.06)$ & $ 28.60 (82.00)$ & $ 21.42 (83.60)$ & $\bf{10.83 (62.24)}$ \\
 & $|S(\hat x)- S|$ & $ 7.97 (5.74)$ & $5.08 (2.59)$ & $\bf{ 0.15 (0.58}$ & $0.14 (0.83)$ \\ \hline

\multirow{3}{*}{ $n/p =0.5,  \sigma = 2$, $\bf{\gamma = 2}$} & $d(S(\hat x)|S_0)$ & $\bf{0.00 (0.00)}$ & $\bf{0.00 (0.00)}$ & $ 2.07 (1.80)$ & $\bf{0.00 (0.00)}$\\
 & $d(S_0|S(\hat x))$ & $ 244.05 (157.22)$ & $ 28.08 (78.77)$ &  $20.01 (80.40)$ & $\bf{ 14.31 (81.83)}$\\
 & $|S(\hat x)- S|$ & $ 7.98 (5.06)$ & $5.44 (3.36)$ & $0.11 (0.42)$ & $\bf{0.05 (0.33)}$\\

\end{tabular}
\end{center}
\end{table}

\begin{table}[h] 
\caption{Change point analysis results: Gaussian random matrix with identity covariance matrix and nine equally spaced change points, with ratio $n/p \in \{ 0.25, 0.5, 0.75, 1\}$, noise level $\sigma \in \{1, 2, 4, 8\}$ and $\gamma \in \{0.25, 0.5, 1, 2\}$.  The reported values are in the form of mean (standard deviation) across $100$ independent simulations. The bold-faced entries indicate the best method in different measures, with the convention that if $S(\hat x) = \emptyset$, then $d(S_0|S(\hat x)) = d(S_0|S(\hat x)) = p$.}\label{table_random_matrix_nine_equal}
\begin{center}
\begin{tabular}{llllll}
   & & \textbf{FL} & \textbf{FLMF} & \textbf{FLMTF} & \textbf{ITALE}
\\ \hline

\multirow{3}{*}{ $\bf{n/p  = 0.25}$, $\sigma = 2,  \gamma = 1$} & $d(S(\hat x)|S_0)$ &  $\bf{0.00 (0.00)}$ & $\bf{0.00 (0.00)} $& $ 4.21 (2.24)$ &  $\bf{0.00 (0.00)}$\\
 & $d(S_0|S(\hat x))$ & $ 71.47 (15.35)$ & $ 19.79 (16.59)$ & $ 12.33 (19.35)$ & $\bf{1.99 (10.32)}$ \\
 & $|S(\hat x) - S|$ & $ 41.18 (6.25)$ & $46.32 (8.54)$ & $0.25 (0.59)$  & $\bf{0.16 (0.71)}$\\ \hline

\multirow{3}{*}{ $\bf{n/p =0.5}$, $\sigma = 2,   \gamma = 1$} & $d(S(\hat x)|S_0)$ & $\bf{0.00 (0.00)}$ & $\bf{0.00 (0.00)}$& $3.84 (1.59)$ & $\bf{0.00 (0.00)}$\\
 & $d(S_0|S(\hat x))$ & $ 73.67  (15.57)$ & $ 14.30 (8.56)$ & $ 6.08 (9.40)$ & $\bf{1.94 ( 10.60)}$ \\
 & $|S(\hat x) - S|$ & $42.69 (7.41)$ & $44.08 (8.63)$ & $\bf{0.06 (0.24)}$ & $0.13 (0.51)$ \\ \hline

\multirow{3}{*}{ $\bf{n/p = 0.75}$, $\sigma = 2,  \gamma = 1$} & $d(S(\hat x)|S_0)$ & $\bf{0.00 (0.00)}$& $\bf{0.00 (0.00)}$ & $3.81 (1.33)$ & $\bf{0.00 (0.00)}$\\
 & $d(S_0|S(\hat x))$ & $74.99 (15.22)$  & $16.23 (12.83)$ & $8.16 (14.38)$ & $\bf{0.66 (4.90)}$\\
 & $|S(\hat x) - S|$ & $43.53 (8.71)$ & $45.65 (9.90)$ & $ 0.13 (0.39)$ & $\bf{0.08 ( 0.46}$\\ \hline

\multirow{3}{*}{ $\bf{n/p = 1}$, $ \sigma = 2,  \gamma = 1$} & $d(S(\hat x)|S_0)$ &$\bf{0.00 (0.00)}$ & $\bf{0.00 (0.00)}$ & $3.84 (1.61)$ & $\bf{0.00 (0.00)}$ \\
 & $d(S_0|S(\hat x))$ & $74.77 (14.65)$ & $ 16.55 (13.95)$ & $8.61 (15.55)$ & $\bf{ 1.51 (10.49)}$ \\
 & $|S(\hat x) - S|$ & $43.66 (9.36)$ & $44.78 (8.60)$ &  $0.11 (0.35)$ & $\bf{0.05 ( 0.26)}$ \\ \hline

\multirow{3}{*}{ $n/p =0.5$, $\bf{\sigma = 1}$, $\gamma = 1$} & $d(S(\hat x)|S_0)$ & $\bf{0.00 (0.00)}$ & $\bf{0.00 (0.00)}$ & $4.22 (2.05)$ & $\bf{ 0.00 (0.00)}$ \\
 & $d(S_0|S(\hat x))$ & $75.45 (13.21)$ & $ 15.52 (9.26)$ & $ 6.75 (9.45)$ & $\bf{1.90 (11.35)}$\\
 & $|S(\hat x) - S|$ & $ 42.99 (8.33)$ & $44.77 (9.37)$ &  $ 0.09 (0.32)$  & $\bf{0.07 (0.38) }$\\ \hline

\multirow{3}{*}{ $n/p =0.5$, $\bf{\sigma = 4}$, $\gamma = 1$} & $d(S(\hat x)|S_0)$ & $\bf{0.00 (0.00)}$ & $\bf{0.00 (0.00)}$ & $ 4.01 (1.98)$ & $\bf{0.00 (0.00)}$ \\
 & $d(S_0|S(\hat x))$ & $74.61 (13.87)$ & $ 14.12 (7.91)$ & $6.35 (9.12)$ & $\bf{ 1.62 ( 10.22)}$ \\
 & $|S(\hat x) - S|$ & $43.56 (7.82)$ & $43.91 (8.64)$ & $\bf{0.08 (0.31)}$ & $ 0.11 (0.37)$\\ \hline

\multirow{3}{*}{ $n/p =0.5$, $\bf{\sigma = 8}$, $\gamma = 1$} & $d(S(\hat x)|S_0)$ & $\bf{0.00 (0.00)}$ & $\bf{0.00 (0.00) }$ & $ 4.16 (2.09)$ & $0.03 (0.17)$\\
 & $d(S_0|S(\hat x))$ & $ 72.57 (13.31)$ & $ 14.75 (11.13)$ &  $ 6.98 (12.05)$ & $\bf{ 0.51 (2.67)}$ \\
 & $|S(\hat x)- S|$ & $43.70 (8.00)$ & $ 45.01 (9.08)$ & $\bf{0.07 (0.26)}$ & $ 0.08 ( 0.31$\\ \hline

\multirow{3}{*}{ $n/p =0.5, \sigma = 2$, $\bf{\gamma = 0.25}$} & $d(S(\hat x)|S_0)$ & $\bf{ 0.06 ( 0.24)}$ & $\bf{ 0.06 (0.24)}$ &  $ 3.61 (1.45)$ & $0.40 (0.59)$ \\
 & $d(S_0|S(\hat x))$ & $ 72.22 (15.97) $  & $ 14.32 (10.37) $ &  $ 6.24 (11.34)$ & $\bf{1.62 (8.82)}$\\
 & $|S(\hat x)- S|$ & $ 43.37 (8.56)$ & $ 41.42 (9.38)$ &  $ 0.09 (0.40)$ & $\bf{ 0.06 (0.24)}$ \\ \hline

 \multirow{3}{*}{ $n/p =0.5,  \sigma = 2$, $\bf{\gamma = 0.5}$} & $d(S(\hat x)|S_0)$ & $\bf{0.00 (0.00)}$ & $\bf{ 0.00 (0.00)}$ & $ 4.16 (2.09)$ & $ 0.01 (0.10)$\\
 & $d(S_0|S(\hat x))$ & $ 77.67 (13.34)$ & $ 18.39 (15.92)$ & $ 10.24 (18.04)$ & $\bf{1.09 (8.75)}$ \\
 & $|S(\hat x)- S|$ & $ 43.50 (7.98)$ & $ 44.37 (8.46)$ & $0.16 (0.49)$ & $\bf{0.08 (0.44)}$ \\ \hline

\multirow{3}{*}{ $n/p =0.5, \sigma = 2$, $\bf{\gamma = 2}$} & $d(S(\hat x)|S_0)$ &$\bf{0.00 (0.00)}$ & $\bf{0.00 (0.00)} $& $ 3.72 (1.53) $ & $\bf{0.00 (0.00)}$\\
 & $d(S_0|S(\hat x))$ & $ 72.50 (15.31) $ & $ 16.59 (14.30)$ &  $8.83 (16.19) $ & $\bf{2.10 (13.14)}$\\
 & $|S(\hat x)- S|$ & $ 42.62 (7.46) $ & $45.37 (9.15) $ & $0.11 (0.31)$ &$\bf{0.07 (0.29)}$\\

\end{tabular}
\end{center}
\end{table}

\begin{table}[h]
\caption{Change point analysis results: Gaussian random matrix with identity covariance matrix and nine unequally spaced change points, with ratio $n/p \in \{ 0.25, 0.5, 0.75, 1\}$, noise level $\sigma \in \{0.5, 1, 2, 4\}$ and $\gamma \in \{0.25, 0.5, 1, 2\}$.  The reported values are in the form of mean (standard deviation) across $100$ independent simulations. The bold-faced entries indicate the best method in different measures, with the convention that if $S(\hat x) = \emptyset$, then $d(S_0|S(\hat x)) = d(S_0|S(\hat x)) = p$.}\label{table_random_matrix_nine_unequal}
\begin{center}
\begin{tabular}{llllll}
   & & \textbf{FL} & \textbf{FLMF} & \textbf{FLMTF} & \textbf{ITALE}
\\ \hline

\multirow{3}{*}{ $\bf{n/p  = 0.25}$, $\sigma = 1,  \gamma = 1$} & $d(S(\hat x)|S_0)$ &  $\bf{0.00 (0.00)}$ & $\bf{0.00 (0.00)}$& $20.34 (6.20)$ &  $\bf{ 0.00 (0.00) }$\\
 & $d(S_0|S(\hat x))$ & $ 257.33 (48.40)$ & $ 20.36 (29.78)$ & $ 19.44 (29.31)$ & $\bf{4.46 (29.08)}$ \\
 & $|S(\hat x) - S|$ & $ 38.81 (6.05)$ & $ 43.96 (8.52)$ & $1.19 (0.63)$  & $\bf{0.18 (0.78)}$\\ \hline

\multirow{3}{*}{ $\bf{n/p =0.5}$, $\sigma = 1,   \gamma = 1$} & $d(S(\hat x)|S_0)$ & $\bf{0.00 (0.00)}$ & $\bf{ 0.00 (0.00)}$& $20.40 (6.09)$ & $\bf{0.00 (0.00)}$\\
 & $d(S_0|S(\hat x))$ & $ 268.07  (36.07)$ & $ 20.54 (29.75)$ & $ 19.81 (30.04)$ & $\bf{3.25 (24.96)}$ \\
 & $|S(\hat x) - S|$ & $43.56 (7.09)$ & $44.47 (8.75)$ & $1.14 (0.64)$ & $\bf{0.06 (0.28)}$ \\ \hline

\multirow{3}{*}{ $\bf{n/p = 0.75}$, $\sigma = 1,  \gamma = 1$} & $d(S(\hat x)|S_0)$ & $\bf{0.00 (0.00)}$ & $\bf{0.00 (0.00)}$ & $20.76 (5.74)$ & $\bf{0.00 (0.00)}$\\
 & $d(S_0|S(\hat x))$ & $276.55 (23.80)$  & $ 28.37 (55.20)$ & $ 28.56 (54.91)$ & $\bf{1.80 (11.86)}$\\
 & $|S(\hat x) - S|$ & $45.39 (7.32)$ & $43.66 (8.95)$ & $ 1.19 (0.63)$ & $\bf{0.08 (0.34)}$\\ \hline

\multirow{3}{*}{ $\bf{n/p = 1}$, $\sigma = 1$, $\gamma = 1$} & $d(S(\hat x)|S_0)$ & $\bf{0.00 (0.00)}$ &$\bf{ 0.00 (0.00)}$ & $20.32 (6.22)$ & $\bf{ 0.00 (0.00)}$ \\
 & $d(S_0|S(\hat x))$ & $ 272.71 (27.03) $ & $ 21.45 (41.23) $ & $ 21.16 (41.22) $ & $\bf{1.70 (9.60}$\\
 & $|S(\hat x) - S|$ & $ 46.37 (8.06) $ & $44.05  (7.30) $ &  $ 1.24 (0.59) $  & $\bf{0.10 (0.48)}$\\ \hline
 
 \multirow{3}{*}{ $n/p = 0.5$, $\bf{\sigma = 0.5}$, $\gamma = 1$} & $d(S(\hat x)|S_0)$ &$\bf{0.00 (0.00)}$ & $\bf{0.00 (0.00)}$ & $ 20.88 (7.66) $ & $\bf{0.00 (0.00)}$ \\
 & $d(S_0|S(\hat x))$ & $271.82 (29.89) $ & $ 18.21 (29.21)$ & $ 18.68 (28.96)$ & $\bf{ 0.97 (6.08)}$ \\
 & $|S(\hat x) - S|$ & $43.88 (7.36)$ & $43.33 (9.04)$ &  $1.24 (0.64)$ & $\bf{0.11 (0.65)}$ \\ \hline

\multirow{3}{*}{ $n/p =0.5$, $\bf{\sigma = 2}$, $\gamma = 1$} & $d(S(\hat x)|S_0)$ & $\bf{0.00 (0.00)}$ & $\bf{0.00 (0.00)}$ & $ 20.72 (5.28)$ & $\bf{ 0.00 (0.00)}$ \\
 & $d(S_0|S(\hat x))$ & $ 275.41 (26.15)$ & $ 20.03 (30.01)$ & $ 19.85 (29.78)$ & $\bf{ 1.13 (4.89)}$ \\
 & $|S(\hat x) - S|$ & $44.61  (7.08)$ & $42.85 (7.98)$ & $1.21 (0.62)$ & $\bf{ 0.10 (0.44)}$ \\ \hline

\multirow{3}{*}{ $n/p =0.5$, $\bf{\sigma = 4}$, $\gamma = 1$} & $d(S(\hat x)|S_0)$ & $\bf{0.00 (0.00)}$ & $\bf{ 0.00 (0.00)}$ & $ 20.33 (5.74)$ & $\bf{0.00 (0.00)}$\\
 & $d(S_0|S(\hat x))$ & $ 272.91 (29.13)$ & $ 29.29 (54.87)$ &  $ 28.87 (54.94)$ & $\bf{ 3.57 (28.91)}$ \\
 & $|S(\hat x)- S|$ & $45.27 (6.87) $ & $ 44.15 (7.91) $ &  $1.12 (0.62)$ & $\bf{ 0.09 (0.35)}$ \\ \hline

\multirow{3}{*}{ $n/p =0.5, \sigma = 1$, $\bf{\gamma = 0.25}$} & $d(S(\hat x)|S_0)$ & $\bf{ 0.00 (0.00)}$ & $\bf{ 0.00 (0.00)}$ &  $21.40 (6.22)$ & $ 0.09 (0.32)$ \\
 & $d(S_0|S(\hat x))$ & $ 274.74 (25.66)$  & $ 30.19 (43.54)$ &  $ 29.66 (43.45) $ & $\bf{ 1.02 (4.77)}$\\
 & $|S(\hat x)- S|$ & $ 44.92 (6.46)$ & $ 43.97 (8.92)$ &  $ 1.08 (0.68)$ & $\bf{ 0.13 (0.39)}$ \\ \hline

 \multirow{3}{*}{ $n/p =0.5, \sigma = 1$, $\bf{\gamma = 0.5}$} & $d(S(\hat x)|S_0)$ & $\bf{0.00 (0.00)}$ & $\bf{ 0.00 (0.00)}$ & $ 21.51 (7.52)$ & $\bf{ 0.00 (0.00)}$\\
 & $d(S_0|S(\hat x))$ & $ 271.97  (27.29)$ & $ 34.19 (62.37)$ & $ 33.86 (61.92)$ & $\bf{0.23 (1.68)}$ \\
 & $|S(\hat x)- S|$ & $ 44.80 (8.18)$ & $ 43.52 (8.68)$ & $1.13 (0.65)$ & $\bf{0.03 (0.17)}$ \\ \hline

\multirow{3}{*}{ $n/p =0.5, \sigma = 1$, $\bf{\gamma = 2}$} & $d(S(\hat x)|S_0)$ & $\bf{ 0.00 (0.00)}$ & $\bf{ 0.00 (0.00)}$ & $20.55 (7.28)$ & $\bf{0.00 (0.00)}$\\
 & $d(S_0|S(\hat x))$ & $ 270.47 (29.05)$ & $ 25.82 (48.23)$ &  $ 25.23 (48.06)$ & $\bf{1.82  (13.22)}$\\
 & $|S(\hat x)- S|$ & $ 43.20 (7.54)$ & $43.08 (8.03)$ & $ 1.15 (0.63)$ & $\bf{ 0.07 (0.36)}$\\

\end{tabular}
\end{center}
\end{table}

\subsection{Gaussian Random Matrix with Band Covariance Matrix}

\Cref{table_random_matrix_band_covariance_one} displays the results of the first scenario of Gaussian random matrix with band covariance matrix: only one change point, with bandwidth $ h \in \{1, 10, 50, 999 \}$, ratio $n/p \in \{0.25, 0.5, 0.75, 1 \}$ and noise level $\sigma \in \{1, 2, 4, 8\}$. \Cref{table_random_matrix_band_covariance_nine_equal} displays the result of the second scenario of Gaussian random matrix with band covariance matrix:  $\left| S \right| = 9$  and change points are equally spaced for bandwidth $ h \in \{1, 10, 50, 999 \}$, ratio $n/p \in \{0.25, 0.5, 0.75, 1 \}$ and noise level $\sigma \in \{1, 2, 4, 8\}$.  \Cref{table_random_matrix_band_covariance_nine_unequal} displays the result of the third scenario of Gaussian random matrix with band covariance matrix:  $\left| S \right| = 9$  and change points are unequally spaced for bandwidth $ h \in \{1, 10, 50, 999 \}$, ratio $n/p \in \{0.25, 0.5, 0.75, 1 \}$ and noise level $\sigma \in \{0.5, 2, 4\}$.

\begin{table}[h]
\caption{Change point analysis results: Gaussian random matrix with band covariance matrix and only one change point, with bandwidth $ h \in \{1, 10, 50, 999 \}$, ratio $n/p \in \{0.25, 0.5, 0.75, 1 \}$ and noise level $\sigma \in \{1, 2, 4, 8\}$. The reported values are in the form of mean (standard deviation) across $100$ independent simulations. The bold-faced entries indicate the best method in different measures, with the convention that if $S(\hat x) = \emptyset$, then $d(S_0|S(\hat x)) = d(S_0|S(\hat x)) = p$.}\label{table_random_matrix_band_covariance_one}
\begin{center}
\begin{tabular}{llllll}
   & & \textbf{FL} & \textbf{FLMF} & \textbf{FLMTF} & \textbf{ITALE}
\\ \hline

\multirow{3}{*}{ $\bf{h = 1}$, $n/p  = 0.5, \sigma = 2$} & $d(S(\hat x)|S_0)$ &  $\bf{0.00 (0.00)}$ & $ \bf{0.00 (0.00)}$ & $ 1.82 (1.62)$ &  $\bf{ 0.00 (0.00)}$\\
 & $d(S_0|S(\hat x))$ & $236.56 (162.54)$ & $26.38 (82.93)$ & $\bf{ 18.75 (84.41)}$ & $67.21 (154.40)$ \\
 & $|S(\hat x) - S|$ & $ 6.55 (5.01)$ & $ 4.54 (2.86)$ & $\bf{ 0.05 (0.26)}$  & $0.35 (0.96)$ \\ \hline

\multirow{3}{*}{ $\bf{h = 10}$, $n/p =0.5, \sigma = 2$} & $d(S(\hat x)|S_0)$ & $\bf{0.00 (0.00)}$ & $\bf{0.00 (0.00)}$ & $1.82 (1.90)$ & $\bf{ 0.00 (0.00)}$\\
 & $d(S_0|S(\hat x))$ & $ 231.10  (154.62)$ & $ 25.20 (76.62)$ & $\bf{17.27 (77.78)}$ & $62.97 (144.90)$ \\
 & $|S(\hat x) - S|$ & $ 6.80 (4.64)$ & $5.24 (2.66)$ & $\bf{0.09 (0.35)}$ & $0.52 (1.30)$ \\ \hline

\multirow{3}{*}{ $\bf{h = 50}$, $n/p = 0.5, \sigma = 2$} & $d(S(\hat x)|S_0)$ & $\bf{ 0.00 (0.00)}$& $\bf{ 0.00 (0.00)}$ & $2.23 (1.87)$ & $\bf{ 0.00 (0.00)}$\\
 & $d(S_0|S(\hat x))$ & $228.29 (168.52)$  & $39.60 (102.42)$ & $\bf{31.67 (104.58)}$ & $ 59.36 (138.74)$ \\
 & $|S(\hat x) - S|$ & $ 7.38 (7.37)$ & $5.71 (3.14)$ & $\bf{ 0.18 (0.66)}$ & $0.39 (1.00)$ \\ \hline

\multirow{3}{*}{ $\bf{h = 999}$, $n/p = 0.5, \sigma = 2$} & $d(S(\hat x)|S_0)$ &$\bf{0.00 (0.00)}$ & $\bf{0.00 (0.00)}$ & $ 1.74 (1.50)$ & $\bf{0.00 (0.00)}$ \\
 & $d(S_0|S(\hat x))$ & $ 249.63 (166.74)$ & $ 22.67 (59.67)$ & $\bf{15.35 (60.78)}$ & $ 53.54 (134.19)$ \\
 & $|S(\hat x) - S|$ & $ 6.93 (5.35)$ & $ 4.85 (3.39)$ & $\bf{ 0.12 (0.46)}$ & $0.43 (1.18)$ \\ \hline

\multirow{3}{*}{  $h = 10$, $\bf{n/p =0.25}$, $\sigma = 2$} & $d(S(\hat x)|S_0)$ & $\bf{0.00 (0.00)}$ & $\bf{ 0.00 (0.00)}$& $2.60 (2.82)$ & $\bf{ 0.00 (0.00)}$ \\
 & $d(S_0|S(\hat x))$ & $ 277.87 (150.62)$ & $ 24.28 (68.39)$ & $\bf{ 16.79 (69.51)}$ & $ 51.66 (129.12)$\\
 & $|S(\hat x) - S|$ & $ 7.71 (5.92)$ & $ 4.75 (2.96)$ & $\bf{ 0.12 (0.54)}$ & $0.44 (1.21)$ \\ \hline

\multirow{3}{*}{ $h = 10$, $\bf{n/p =0.75}$, $\sigma = 2$} & $d(S(\hat x)|S_0)$ & $\bf{ 0.00 (0.00)}$ & $\bf{0.00 (0.00)}$ & $ 1.90 (1.91)$ & $\bf{ 0.00 (0.00)}$ \\
 & $d(S_0|S(\hat x))$ & $ 199.22 (156.85)$ & $ 17.14 (48.55)$ & $\bf{ 8.73 (48.72)}$ & $ 51.54 (133.34)$ \\
 & $|S(\hat x) - S|$ & $5.64 (4.75)$ & $ 5.25 (3.12)$ & $\bf{0.04 (0.24)}$ & $0.38 (0.91)$\\ \hline

\multirow{3}{*}{ $h = 10$, $\bf{n/p = 1}$, $\sigma = 2$} & $d(S(\hat x)|S_0)$ & $\bf{0.00 (0.00)}$ & $\bf{ 0.00 (0.00)}$ & $ 1.80 (1.82)$ & $\bf{0.00 (0.00)}$\\
 & $d(S_0|S(\hat x))$ & $ 263.41 (164.20)$ & $ 17.66 (57.97)$ & $\bf{ 10.26 (58.90)}$ &  $65.98 (138.94)$  \\
 & $|S(\hat x)- S|$ & $ 6.60 (5.37)$ & $ 4.61 (2.88)$ & $\bf{ 0.04 (0.24)}$ & $ 0.50 (1.14)$\\ \hline

\multirow{3}{*}{ $h = 10, n/p =0.5$, $\bf{\sigma = 1}$} & $d(S(\hat x)|S_0)$ & $\bf{ 0.00 (0.00)}$ & $\bf{ 0.00 (0.00)}$ &  $ 1.89 (1.75)$ & $\bf{0.00 (0.00)}$ \\
 & $d(S_0|S(\hat x))$ & $ 221.62 (160.88)$  & $ 21.51 (68.94)$ &  $\bf{14.16 (69.94)}$ & $ 82.38 (162.18)$ \\
 & $|S(\hat x)- S|$ & $ 5.43 (3.33) $ & $ 4.59 (2.72) $ & $\bf{ 0.06 (0.34)}$&  $0.68 (1.56)$ \\ \hline

 \multirow{3}{*}{ $h = 10, n/p =0.5$, $\bf{\sigma = 4}$} & $d(S(\hat x)|S_0)$ & $\bf{0.00 (0.00)}$ & $\bf{ 0.00 (0.00)}$ & $ 1.98 (2.19)$ & $\bf{0.00 (0.00)}$\\
 & $d(S_0|S(\hat x))$ & $ 247.59 (162.44)$ & $ 16.20 (38.83)$ & $\bf{8.00 (39.33)}$ & $32.49 (104.56)$ \\
 & $|S(\hat x)- S|$ & $ 7.37 (5.86) $ & $ 4.83 (2.76)$ & $\bf{ 0.06 (0.34)}$ & $0.26 (0.91)$ \\ \hline

\multirow{3}{*}{ $h = 10, n/p =0.5$, $\bf{\sigma = 8}$} & $d(S(\hat x)|S_0)$ & $\bf{ 0.00 (0.00)}$ & $\bf{ 0.00 (0.00)}$ & $1.99 (1.55)$ & $\bf{0.00 (0.00)}$\\
 & $d(S_0|S(\hat x))$ & $ 248.91 (148.06)$ & $ 20.73 (53.63)$ &  $\bf{13.37 (54.97)}$ & $ 20.13 (85.08)$ \\
 & $|S(\hat x)- S|$ & $ 6.55 (3.71)$ & $ 4.61 (2.69)$ & $\bf{0.07 (0.29)}$ &  $0.11 (0.42)$ \\

\end{tabular}
\end{center}
\end{table}

\begin{table}[h] 
\caption{Change point analysis results: Gaussian random matrix with band covariance matrix and nine equally spaced change points, with bandwidth $ h \in \{1, 10, 50, 999 \}$, ratio $n/p \in \{0.25, 0.5, 0.75, 1 \}$ and noise level $\sigma \in \{1, 2, 4, 8\}$.  The reported values are in the form of mean (standard deviation) across $100$ independent simulations. The bold-faced entries indicate the best method in different measures, with the convention that if $S(\hat x) = \emptyset$, then $d(S_0|S(\hat x)) = d(S_0|S(\hat x)) = p$.}\label{table_random_matrix_band_covariance_nine_equal}
\begin{center}
\begin{tabular}{llllll}
   & & \textbf{FL} & \textbf{FLMF} & \textbf{FLMTF} & \textbf{ITALE}
\\ \hline

\multirow{3}{*}{ $\bf{h = 1}$, $n/p  = 0.5, \sigma = 2$} & $d(S(\hat x)|S_0)$ &  $\bf{0.00 (0.00)}$ & $ \bf{0.00 (0.00)}$ & $ 4.37 (2.73)$ &  $\bf{ 0.00 (0.00)}$\\
 & $d(S_0|S(\hat x))$ & $ 73.82 (16.05)$ & $ 18.50 (16.45)$ & $\bf{10.39 (18.19)}$ & $ 30.24 (33.39)$ \\
 & $|S(\hat x) - S|$ & $ 38.94 (7.64)$ & $ 43.91 (9.55)$ & $\bf{0.12 (0.36)}$  & $3.18 (4.91)$ \\ \hline

\multirow{3}{*}{ $\bf{h = 10}$, $n/p =0.5, \sigma = 2$} & $d(S(\hat x)|S_0)$ & $\bf{0.00 (0.00)}$ & $\bf{ 0.00 (0.00)}$& $3.91 (1.90)$ & $\bf{ 0.00 (0.00)}$\\
 & $d(S_0|S(\hat x))$ & $ 73.17  (14.27)$ & $ 15.37 (11.06)$ & $\bf{6.40 (11.37)}$ & $ 20.68 (26.00)$ \\
 & $|S(\hat x) - S|$ & $ 37.97 (6.73)$ & $ 42.02 (8.08)$ & $\bf{0.05 (0.22)}$& $2.30 (4.74)$ \\ \hline

\multirow{3}{*}{ $\bf{h = 50}$, $n/p = 0.5, \sigma = 2$} & $d(S(\hat x)|S_0)$ & $\bf{ 0.00 (0.00)}$ & $\bf{ 0.00 (0.00)}$ & $3.83 (1.81)$ & $\bf{0.00 (0.00)}$\\
 & $d(S_0|S(\hat x))$ & $72.12 (16.77)$  & $ 16.01 (12.28)$ & $\bf{7.81 (13.40)}$ & $ 30.75 (29.63)$ \\
 & $|S(\hat x) - S|$ & $ 38.13 (6.73)$ & $ 42.89 (8.05)$ & $\bf{ 0.12 (0.38)}$ & $ 2.94 (3.88)$ \\ \hline

\multirow{3}{*}{ $\bf{h = 999}$, $n/p = 0.5, \sigma = 2$} & $d(S(\hat x)|S_0)$ & $\bf{0.00 (0.00)}$ & $\bf{0.00 (0.00)}$ & $ 3.70 (1.56) $ & $\bf{0.00 (0.00)}$ \\
 & $d(S_0|S(\hat x))$ & $ 69.41 (16.69)$ & $ 13.20 (9.66)$ & $\bf{5.29 (10.14)}$ &  $ 28.28 (30.02)$ \\
 & $|S(\hat x) - S|$ & $ 37.83 (7.28)$ & $ 42.42 (10.27)$ & $\bf{ 0.04 (0.24)}$ & $2.76 (3.91)$ \\ \hline

\multirow{3}{*}{  $h = 10$, $\bf{n/p =0.25}$, $\sigma = 2$} & $d(S(\hat x)|S_0)$ & $\bf{0.00 (0.00)}$ & $\bf{0.00 (0.00)}$& $ 4.15 (1.86)$ & $\bf{ 0.00 (0.00)}$ \\
 & $d(S_0|S(\hat x))$ & $ 71.77 (15.53)$ & $ 17.20 (12.35)$ & $\bf{ 8.82 (14.01)}$ & $ 17.29 (27.19)$\\
 & $|S(\hat x) - S|$ & $ 37.19 (5.66)$ & $ 41.72 (8.76)$ &  $\bf{0.16 (0.48)}$  & $1.17 (2.67)$ \\ \hline

\multirow{3}{*}{ $h = 10$, $\bf{n/p =0.75}$, $\sigma = 2$} & $d(S(\hat x)|S_0)$ & $\bf{0.00 (0.00)}$ &$\bf{0.00 (0.00)}$ & $ 3.65 (1.31)$ & $\bf{ 0.00 (0.00)}$ \\
 & $d(S_0|S(\hat x))$ & $ 71.85 (15.90)$ & $ 17.37 (16.16)$ & $\bf{9.98 (18.09)}$ & $ 27.00 (29.24)$ \\
 & $|S(\hat x) - S|$ & $36.53 (7.55)$ & $ 41.20 (7.43)$ & $\bf{0.13 (0.34)}$ & $ 2.70 (3.83)$\\ \hline

\multirow{3}{*}{ $h = 10$, $\bf{n/p = 1}$, $\sigma = 2$} & $d(S(\hat x)|S_0)$ & $\bf{0.00 (0.00)}$ & $\bf{ 0.00 (0.00)}$ & $ 3.95 (1.71)$ & $\bf{0.00 (0.00)}$\\
 & $d(S_0|S(\hat x))$ & $ 73.78 (15.50)$ & $ 16.63 (14.67)$ & $\bf{8.93 (16.32)}$ &  $ 25.51 (35.16)$  \\
 & $|S(\hat x)- S|$ & $ 37.64 (8.48)$ & $ 41.44 (8.62)$ & $\bf{ 0.11 (0.35)}$ & $2.04 (3.22)$\\ \hline

\multirow{3}{*}{ $h = 10, n/p =0.5$, $\bf{\sigma = 1}$} & $d(S(\hat x)|S_0)$ & $\bf{ 0.00 (0.00)}$ & $\bf{ 0.00 (0.00)}$ &  $3.70 (1.18)$ & $ 0.12 (0.38)$ \\
 & $d(S_0|S(\hat x))$ & $71.25 (17.26)$  & $15.28 (10.57)$ &  $\bf{7.85 (12.68)}$ & $43.38 (32.25)$ \\
 & $|S(\hat x)- S|$ & $ 35.43 (7.33)$ & $ 39.45 (7.98)$ &  $\bf{0.12 (0.36)}$ &  $ 5.66 (6.30)$ \\ \hline

 \multirow{3}{*}{ $h = 10, n/p =0.5$, $\bf{\sigma = 4}$} & $d(S(\hat x)|S_0)$ & $\bf{0.00 (0.00)}$ & $\bf{ 0.00 (0.00)}$ & $ 4.14 (2.07)$ & $\bf{0.00 (0.00)}$\\
 & $d(S_0|S(\hat x))$ & $ 73.90 (14.90)$ & $ 18.76 (17.83)$ & $\bf{10.67 (19.76)}$& $13.74 (25.15)$ \\
 & $|S(\hat x)- S|$ & $41.53 (7.68)$ & $ 44.69 (7.82)$ & $\bf{0.15 (0.46)}$ & $0.82 (1.62)$ \\ \hline

\multirow{3}{*}{ $h = 10, n/p =0.5$, $\bf{\sigma = 8}$} & $d(S(\hat x)|S_0)$ & $\bf{ 0.00 (0.00)}$ & $\bf{ 0.00 (0.00)}$ & $ 3.89 (1.80)$ & $\bf{0.00 (0.00)}$\\
 & $d(S_0|S(\hat x))$ & $ 75.88 (15.43)$ & $19.07 (17.77)$ &  $11.76 (20.12)$ & $\bf{8.49 (18.94)}$\\
 & $|S(\hat x)- S|$ & $ 42.38 (7.56)$ & $44.95 (8.43)$ & $\bf{ 0.19 (0.49)}$ &  $0.45 (0.98)$ \\

\end{tabular}
\end{center}
\end{table}

\begin{table}[h]
\caption{Change point analysis results: Gaussian random matrix with band covariance matrix and nine unequally spaced change points, with bandwidth $ h \in \{1, 10, 50, 999 \}$, ratio $n/p \in \{0.25, 0.5, 0.75, 1 \}$ and noise level $\sigma \in \{0.5, 2, 4\}$. The reported values are in the form of mean (standard deviation) across $100$ independent simulations. The bold-faced entries indicate the best method in different measures, with the convention that if $S(\hat x) = \emptyset$, then $d(S_0|S(\hat x)) = d(S_0|S(\hat x)) = p$.}\label{table_random_matrix_band_covariance_nine_unequal}
\begin{center}
\begin{tabular}{llllll}
   & & \textbf{FL} & \textbf{FLMF} & \textbf{FLMTF} & \textbf{ITALE}
\\ \hline

\multirow{3}{*}{ $\bf{h = 1}$, $n/p  = 0.5, \sigma = 2$} & $d(S(\hat x)|S_0)$ &  $\bf{0.00 (0.00)}$ & $\bf{0.00 (0.00)}$ & $ 20.44 (7.57)$ &  $\bf{ 0.00 (0.00)}$\\
 & $d(S_0|S(\hat x))$ & $ 260.78 (43.80)$ & $ 25.68 (45.75)$ & $\bf{25.58 (45.45)}$ & $ 36.99 (73.95)$ \\
 & $|S(\hat x) - S|$ & $ 39.67 (6.99)$ & $ 43.27 ( 8.89$ & $\bf{1.15 (0.66)}$  & $1.83 (3.15)$ \\ \hline

\multirow{3}{*}{$\bf{h = 10}$, $n/p  = 0.5, \sigma = 2$} & $d(S(\hat x)|S_0)$ & $\bf{0.00 (0.00)}$ & $\bf{ 0.00 (0.00)}$ & $20.27 (5.01)$ & $\bf{ 0.00 (0.00) }$\\
 & $d(S_0|S(\hat x))$ & $ 264.26  (39.21)$ & $ 30.17 (57.87)$ & $\bf{ 29.66 (57.77)}$ & $43.60 (86.91)$ \\
 & $|S(\hat x) - S|$ & $37.39 (5.59)$ & $39.99 (7.79)$ & $\bf{1.09 (0.59)}$ & $1.61 (3.29)$ \\ \hline

\multirow{3}{*}{ $\bf{h = 50}$, $n/p  = 0.5, \sigma = 2$} & $d(S(\hat x)|S_0)$ & $\bf{ 0.00 (0.00)}$ & $\bf{ 0.00 (0.00)}$ & $ 20.07 (7.08)$ & $\bf{0.00 (0.00)}$\\
 & $d(S_0|S(\hat x))$ & $ 264.38 (38.43)$  & $ 20.20 (36.47)$ & $\bf{19.91 (36.29)}$ & $37.70 (80.75)$ \\
 & $|S(\hat x) - S|$ & $37.45 (6.28)$ & $40.92  (7.88)$ & $\bf{ 1.13 (0.66)}$ & $ 1.52 (2.61)$ \\ \hline

\multirow{3}{*}{$\bf{h = 999}$, $n/p  = 0.5, \sigma = 2$} & $d(S(\hat x)|S_0)$ & $\bf{  0.00 (0.00)}$ & $\bf{ 0.00 (0.00)}$ & $19.49 (6.45)$ & $\bf{ 0.00 (0.00)}$ \\
 & $d(S_0|S(\hat x))$ & $246.50 (55.18)$ & $21.87 (42.64)$ & $\bf{21.43 (42.80)}$ & $40.96 (78.84)$\\
 & $|S(\hat x) - S|$ & $ 32.57 (6.19)$ & $39.90 (8.06)$ & $\bf{ 1.13 (0.65)}$  & $1.85 (3.53)$ \\ \hline
 
 \multirow{3}{*}{ $h = 10$, $\bf{n/p = 0.25}$, $\sigma = 2$} & $d(S(\hat x)|S_0)$ & $\bf{0.00 (0.00)}$ & $\bf{0.00 (0.00)}$ & $20.22 (7.82)$ & $\bf{0.00 (0.00)}$ \\
 & $d(S_0|S(\hat x))$ & $ 244.04 (55.81)$ & $38.07 (71.53)$ & $\bf{ 36.84 (71.74)}$ & $36.05 (73.09)$ \\
 & $|S(\hat x) - S|$ & $35.08 (6.43)$ & $40.46 (8.58)$ & $\bf{1.09 (0.65)}$ & $1.37 (2.27)$ \\ \hline

\multirow{3}{*}{ \bf{$h = 10$}, $\bf{n/p  = 0.75}$, $\sigma = 2$} & $d(S(\hat x)|S_0)$ & $\bf{ 0.00 (0.00)}$ & $\bf{0.00 (0.00)}$ & $ 20.87 (6.29)$ & $\bf{ 0.00 (0.00) }$ \\
 & $d(S_0|S(\hat x))$ & $ 266.44 (35.89)$ & $ 21.95 (43.96)$ & $\bf{ 21.31 (44.07)}$ & $35.51 (72.03)$ \\
 & $|S(\hat x) - S|$ & $38.93  (7.09)$ & $39.84 (7.92)$ & $\bf{1.21 (0.59)}$ &  $2.09 (4.11)$ \\ \hline

\multirow{3}{*}{\bf{$h = 10$}, $\bf{n/p  = 1}$, $\sigma = 2$} & $d(S(\hat x)|S_0)$ & $\bf{ 0.00 (0.00)}$ & $\bf{ 0.00 (0.00)}$ & $ 19.52 (6.13)$ & $\bf{0.00 (0.00)}$\\
 & $d(S_0|S(\hat x))$ & $262.17 (39.43)$ & $\bf{28.42 (54.78)}$ &  $ 28.63 (54.81) $ & $31.62 (69.25)$  \\
 & $|S(\hat x)- S|$ & $37.69 (7.69)$ & $ 40.40 (7.57)$ & $\bf{1.11 (0.65)}$ & $ 1.38 (2.30)$ \\ \hline

\multirow{3}{*}{ $h = 10, n/p =0.5$, $\bf{\sigma = 0.5}$ } & $d(S(\hat x)|S_0)$ & $\bf{ 0.00 (0.00)}$ & $\bf{ 0.00 (0.00)}$ &  $19.75 (5.95)$ & $ 0.09 (0.32)$ \\
 & $d(S_0|S(\hat x))$ & $ 252.26 (46.95)$  & $\bf{19.17 (33.98)}$ &  $19.99 (33.71)$ & $62.16 (91.68)$ \\
 & $|S(\hat x)- S|$ & $ 36.47 (5.99)$ & $ 40.36 (8.11)$ & $\bf{ 1.21 ( 0.62)}$ &  $3.46 (5.08)$ \\ \hline

 \multirow{3}{*}{ $h = 10, n/p =0.5$, $\bf{\sigma = 2}$} & $d(S(\hat x)|S_0)$ & $\bf{0.00 (0.00)}$ & $\bf{ 0.00 (0.00)}$ & $20.88 (6.45)$ & $\bf{0.00 (0.00)}$\\
 & $d(S_0|S(\hat x))$ & $ 260.71  (41.38)$ & $25.16 (45.30)$ & $ 24.92 (45.04)$ & $\bf{14.56 (44.48)}$ \\
 & $|S(\hat x)- S|$ & $ 39.69 (6.60)$ & $ 41.17 (7.75)$ & $1.12 (0.61)$ & $\bf{0.56 (1.36)}$ \\ \hline

\multirow{3}{*}{ $h = 10, n/p =0.5$, $\bf{\sigma = 4}$} & $d(S(\hat x)|S_0)$ & $\bf{0.00 (0.00)}$ & $\bf{0.00 (0.00)}$ & $ 19.79 (7.05)$ & $\bf{0.00 (0.00)}$\\
 & $d(S_0|S(\hat x))$ & $ 272.77 (29.71)$ & $ 24.01 (49.14)$ &  $ 23.01 (48.83)$ & $\bf{7.47  (37.18)}$\\
 & $|S(\hat x)- S|$ & $ 41.35 (7.59)$ & $40.97 (9.21)$ & $ 1.15 (0.67)$ & $\bf{ 0.26 (1.06)}$\\

\end{tabular}
\end{center}
\end{table}

\end{document}